\documentclass[11pt]{article}

\usepackage{latexsym}
\usepackage{amssymb}
\usepackage{amsthm}
\usepackage{amscd}
\usepackage{amsmath}
\usepackage{mathrsfs}
\usepackage{graphicx}
\usepackage{hyperref}
\usepackage{shuffle}
\usepackage{tikz-cd}
\usepackage[all]{xy}
\input xy \xyoption{frame}
\usepackage{ytableau}

\usepackage{enumerate,tikz}
\usepackage{color}
\usepackage{mathdots}
\usepackage[vcentermath,enableskew]{youngtab}

\usepackage{mathabx}
\usepackage{mathtools}
\usepackage{array}

\newcommand{\dK}[1]{\mathbin{\widesim{#1}{\mathsf{dK}}}}
\newcommand{\K}[1]{\mathbin{\widesim{#1}{\mathsf{K}}}}

\newcommand{\simRBS}[1]{\mathbin{\widesim{#1}{\rBS}}}

\def\cI{I}

\definecolor{darkred}{rgb}{0.7,0,0} % darkred color
\newcommand{\defn}[1]{{\color{darkred}\emph{#1}}} % emphasis of a definition

\numberwithin{equation}{section}
\theoremstyle{definition}
\newtheorem* {theorem*}{Theorem}
\newtheorem* {conjecture*}{Conjecture}
\newtheorem{theorem}{Theorem}[section]

\theoremstyle{definition}

\theoremstyle{definition}

\newtheorem* {example*}{Example}

\newtheorem{lemma}[theorem]{Lemma}
\theoremstyle{definition}
\newtheorem{definition}[theorem]{Definition}
\theoremstyle{definition}

\newtheorem{proposition}[theorem]{Proposition}
\newtheorem{corollary}[theorem]{Corollary}

\newtheorem* {remark*}{Remark}
\newtheorem {remark}[theorem]{Remark}
\theoremstyle{definition}
\newtheorem {example}[theorem]{Example}
\theoremstyle{definition}

\theoremstyle{definition}

\theoremstyle{definition}

\theoremstyle{definition}

\def\({\left(}
\def\){\right)}

\newcommand{\QQ}{\mathbb{Q}}

\def\NN{\mathbb{N}}

\def\ZZ{\mathbb{Z}}

\newcommand{\cM}{\mathcal{M}}
\newcommand{\cN}{\mathcal{N}}

\newcommand{\sgn}{\mathrm{sgn}}

\def\barr{\begin{array}}
\def\earr{\end{array}}
\def\ba{\begin{aligned}}
\def\ea{\end{aligned}}
\def\be{\begin{equation}}
\def\ee{\end{equation}}

\def\qquand{\qquad\text{and}\qquad}

\def\cH{\mathcal H}

\def\PP{\mathbb{P}}

\def\ben{\begin{enumerate}}
\def\een{\end{enumerate}}

\def\cE{\mathcal E}
\def\cF{\mathcal F}

\def\fpf{{\tt {FPF}}}

\def\ellfpf{\ellhat_\fpf}

\newcommand{\cA}{\mathcal{A}}

\def\tS{\tilde S}

\def\Inv{\operatorname{Inv}}

\def\arcstart{\ \xy<0cm,-0.5cm>\xymatrix@R=.1cm@C=.5cm }
\newcommand{\arcstartc}[1]{\ \xy<0cm,-.15cm>\xymatrix@R=.1cm@C=#1cm}

\def\tp{\Theta^+}

\def\cF{\mathcal F}
\def\cFp{\cF_n^+}
\def\cFm{\cF_n^-}

\def\sfpf{\sgn_\fpf}
\def\m{\mathfrak{m}}
\def\n{\mathfrak{n}}

\def\invsim_i{\overset{\mathrm{i}}{\underset{\mathrm{inv}}{\sim}}}

\def\ellfpf{\ell^{\mathsf{FPF}}}

\def\m{\mathbf{m}}
\def\n{\mathbf{n}}

\def\lch{\mathrm{lch}}

\def\row{\mathfrak{row}}
\def\col{\mathfrak{col}}

\def\PRSK{P_{\mathsf{RSK}}}
\def\QRSK{Q_{\mathsf{RSK}}}

\def\rBS{\mathsf{It}}

\newcommand{\widesim}[3][1.5]{
  \overset{#2}{\underset{#3}{\scalebox{#1}[1]{$\sim$}}}
}

\newcommand{\ytab}[1]{
\ytableausetup{boxsize = .4cm,aligntableaux=center}
 \begin{ytableau} #1 \end{ytableau}}
\newcommand{\ytabb}[1]{
\ytableausetup{boxsize = .9cm,aligntableaux=center}
 \begin{ytableau} #1 \end{ytableau}}

\newcommand{\yttab}[1]{
\ytableausetup{boxsize = .6cm,aligntableaux=center}
 \begin{ytableau} #1 \end{ytableau}}

\def\Pafpf{{\operatorname{Par}_{\mathrm{FPF}}}}

\usepackage[colorinlistoftodos]{todonotes}

\newcommand{\rB}{\mathsf{rB}}
\newcommand{\PrB}{P_{\rB}}
\newcommand{\BrB}{B^{\rB}}
\DeclareMathOperator{\Norm}{Norm}

\newcommand{\dom}{\Omega_{\mathrm{dom}}}

\newcommand{\ol}[1]{\overline{#1}}
\newcommand{\RSYT}{\textup{RSYT}}
\newcommand{\offset}{\mathbf{s}}
\DeclareMathOperator{\rev}{\textup{rev}}

\usepackage{fullpage}
\usepackage{ytableau}
\usepackage{mathabx}
\usepackage{array}
\usepackage{colonequals}

\graphicspath{{figures/}}

\begin{document}
\title{Molecules of an affine FPF $W$-graph and a labelled row-Beissinger reconstruction}
\author{
Yifeng ZHANG\thanks{
Email: \tt calvinz314159@gmail.com
}
}
\date{South China Normal University}

\maketitle

\begin{abstract}
Kazhdan--Lusztig $W$-graphs encode the cell structure of Hecke algebras, while their bidirected connected components are called molecules.
In finite type~$A$, the Robinson--Schensted correspondence describes cells and molecules, and Beissinger's row insertion constructs the common tableau associated with an involution.
In affine type~$A$, the affine matrix-ball construction (AMBC) assigns an affine permutation a pair of tabloids together with a dominant weight, and Marberg introduced affine fixed-point-free (FPF) $W$-graphs indexed by affine FPF involutions.
We classify and enumerate the molecules of Marberg's $\m$-type affine FPF $W$-graph $\Gamma_n^{\m}$.
We show that molecules with the same AMBC shape have isomorphic
descent-labelled weighted bidirected graphs, after a cyclic relabelling
of the affine Dynkin diagram, even when their AMBC weights differ; both
directed weights on every retained edge are equal to one.
We also prove that labelled complete-cycle row-Beissinger truncations recover the full AMBC datum of an affine FPF involution.

\end{abstract}

\setcounter{tocdepth}{2}
\tableofcontents

\section{Introduction}

Kazhdan--Lusztig theory associates to a Coxeter system
$(W,S)$ a canonical basis of its Hecke algebra and a
corresponding $W$-graph \cite{KL}.  The strongly connected
components of this graph are the Kazhdan--Lusztig cells, while
the connected components obtained by retaining only the
bidirected edges are called molecules.

For the symmetric group, the Robinson--Schensted
correspondence gives a combinatorial description of this
structure.  The insertion and recording tableaux determine the
right and left cells, respectively, while Knuth moves describe
the bidirected edges.  When the input permutation is an
involution, its two Robinson--Schensted tableaux coincide.
Beissinger's insertion algorithm constructs this common
tableau directly from the cycles of the involution
\cite{Beissinger}.

For the notation used in this introduction, $\tS_n$ is the affine
symmetric group and, when $n$ is even, $\cF_n$ is its set of affine
fixed-point-free involutions.  The symbol $\dom$ denotes the set of
dominant AMBC triples, and $\Gamma_n^{\m}$ denotes Marberg's
$\m$-type affine FPF $W$-graph with vertex set $\cF_n$.  Full
definitions and normalizations are given in
Section~\ref{prelim-sect}.

For the affine symmetric group $\tS_n$, Chmutov,
Pylyavskyy, and Yudovina introduced the affine matrix-ball
construction
\[
\Phi:\tS_n\longrightarrow\dom,
\qquad
w\longmapsto\bigl(P(w),Q(w),\rho(w)\bigr),
\]
where $P(w)$ and $Q(w)$ are tabloids of the same shape and
$\rho(w)$ is a dominant weight \cite{CPY}.  The behavior of
this correspondence under affine Knuth moves and dual
equivalence was further studied by Chmutov, Lewis, and
Pylyavskyy \cite{CLP}.

Marberg constructed two $\tS_n$-graphs whose vertices are the
affine fixed-point-free involutions in $\cF_n$ \cite{Marberg}.
In this paper, we study the molecules of the graph
$\Gamma_n^{\m}$.  If $x\in\cF_n$, then
\[
\Phi(x)=(P,P,\rho),
\]
so the two tabloid coordinates of AMBC coincide.

Our first main result gives a labelled row-Beissinger reconstruction of
the AMBC data.  We truncate the periodic matching of $x$ by taking
complete two-cycles and apply finite row Beissinger insertion.  Stable
PNAP row profiles extracted from these truncations recover the common
AMBC tabloid $P$.  In the other direction, inverse finite row
Beissinger insertion recovers the finite matching from every labelled
truncation; extracting one representative of each translation orbit
and completing periodically recovers $x$.  The channel distances of the reconstructed affine
involution then recover $\rho$.  This last step is a reconstruction
through affine channels, not a formula in terms of the horizontal
positions of the finite tableaux.

Our second main result describes the molecules of
$\Gamma_n^{\m}$.  We identify its bidirected edges with rank-two
involutive transformations.  Under AMBC these become tabloid
Knuth moves on the common tabloid and preserve the dominant
weight.  The vertex descent label is the $\tau$-invariant of the
common tabloid, and the two directed weights on every bidirected
edge are both one.  Cyclic rotation acts transitively on the
connected components of the tabloid Knuth graph and rotates the
descent labels by the corresponding affine Dynkin-diagram
automorphism.  This gives the following main theorem.
For a partition $\lambda\vdash n$, write $\lambda'$ for its conjugate
partition and set
\[
d_\lambda:=\gcd(\lambda'_1,\lambda'_2,\ldots).
\]
Write $\mathcal A_\lambda$ for the graph on tabloids of shape
$\lambda$ whose edges are the nontrivial tabloid Knuth moves; the
operators defining these moves are given in
Section~\ref{Knuth-sect}.
For $k\in\mathbb Z$, write $\alpha_k(s_i)=s_{i+k}$, with subscripts
read modulo $n$.
\begin{theorem}[Classification and enumeration of molecules]
\label{intro-molecule-thm}
Assume that $n\geq4$ is even.
Let $\lambda$ be an AMBC shape occurring for an element of $\cF_n$, and
let $\rho$ be a dominant weight occurring with $\lambda$.
\begin{enumerate}
\item The molecules of $\Gamma_n^{\m}$ with AMBC shape $\lambda$ and
      weight $\rho$ are indexed by the connected components of the
      tabloid Knuth graph $\mathcal A_\lambda$, and there are exactly
      $d_\lambda$ of them.
\item Any two molecules of $\Gamma_n^{\m}$ having the same AMBC
      shape are isomorphic, after applying some $\alpha_k$ to their
      descent labels, as weighted directed graphs obtained by
      retaining their bidirected edges.  Both directed weights on
      every such edge are equal to one; no equality of the AMBC
      weights is required.
\end{enumerate}
\end{theorem}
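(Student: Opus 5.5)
We prove part~(1) by transporting the bidirected structure of $\Gamma_n^{\m}$ through AMBC, and part~(2) by letting cyclic rotation act on both sides.

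\textbf{Step 1: restrict AMBC to the fibre.} By the identification above, $\Phi(x)=(P,P,\rho)$ for every $x\in\cF_n$. Fix $\lambda$ and a dominant $\rho$ that occurs with $\lambda$. We aim to show that $x\mapsto P(x)$ is a bijection from $\{x\in\cF_n:\Phi(x)=(P,P,\rho),\ \mathrm{sh}(P)=\lambda\}$ onto all tabloids of shape $\lambda$. Injectivity follows from the labelled row-Beissinger reconstruction, since $(P,\rho)$ determines $x$. For surjectivity we invoke the CPY characterization of the image of $\Phi$: the triple $(P,P,\rho)$ is realized by an element of $\tS_n$, and by symmetry of AMBC under inversion that element is an involution. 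The delicate point is that it must be fixed-point-free. We would argue this from the assumption that $\rho$ occurs with $\lambda$ for some element of $\cF_n$, together with the fact that tabloid Knuth moves and rotation preserve fixed-point-freeness.

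\textbf{Step 2: edges of a molecule.} We use the identification of bidirected edges with rank-two involutive transformations. Under AMBC these are tabloid Knuth moves on $P$ that preserve $\rho$, and conversely every nontrivial tabloid Knuth move is realized by such an edge. Hence the bidirected subgraph on the fibre over $(\lambda,\rho)$ is isomorphic to $\mathcal A_\lambda$, with the vertex label $\tau(P)$ and both directed weights equal to $1$. It follows that molecules correspond to connected components of $\mathcal A_\lambda$.

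\textbf{Step 3: counting components.} We show that $\mathcal A_\lambda$ has exactly $d_\lambda$ components. We use that cyclic rotation (adding $1$ mod $n$ to the entries) commutes with Knuth moves and acts transitively on the components. It then suffices to compute a stabilizer. We would attach to each tabloid a charge-type invariant modulo $d_\lambda$. A natural choice is the sum over rows of the entries, or the sum of column positions of the entries, each reduced modulo the column heights. We then check two things:
\begin{itemize}
\item[(a)] this invariant is preserved by every Knuth move, since a move swaps entries that differ by $1$ inside a local window;
\item[(b)] rotation shifts it by a generator of $\ZZ/d_\lambda$.
\end{itemize}
Transitivity then gives at most $d_\lambda$ components, and (b) gives at least $d_\lambda$. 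This gcd computation is the main obstacle. If this invariant fails, we would instead show that the rotation $r^{d_\lambda}$ maps some explicit tabloid, such as the row-reading superstandard one, into its own component by a chain of Knuth moves. We would also show that $r^j$ with $d_\lambda\nmid j$ changes the invariant, using that $r^n$ acts trivially.

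\textbf{Step 4: part (2).} Two molecules with shape $\lambda$ correspond to components $C,C'$ of $\mathcal A_\lambda$, possibly over different weights $\rho,\rho'$. By transitivity, $C'=r^k(C)$ for some $k$. Rotation is a graph isomorphism of $\mathcal A_\lambda$ that satisfies $\tau(r^kP)=\alpha_k(\tau(P))$. Composing with the weight-independent isomorphisms of Step~2 gives the required descent-label-twisted isomorphism, with all weights equal to one.
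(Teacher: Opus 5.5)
Your Steps~1, 2 and~4 follow the paper's route (Theorem~\ref{omega-thm}, Proposition~\ref{ambc-bidirected-graph-prop}, Theorem~\ref{omega1-thm}), apart from minor slips. First, injectivity on the fibre is just injectivity of $\Phi$; the row-Beissinger reconstruction goes from truncations to $x$, not from $(P,\rho)$ to $x$. Second, the detour through the image of $\Phi$ and inversion is unnecessary, and it would need $\rho=-\rev_\lambda(\rho)$ (Lemma~\ref{rho-block-antisymmetry-lem}). Walking along Knuth moves via $x\mapsto s_jxs_j$ and then conjugating by a power of $\omega$ already produces $\Psi(P',P',\rho)$ as an element of $\cF_n$. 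That walk uses transitivity of rotation on the components of $\mathcal A_\lambda$ (\cite[Lemma~7.4]{CLP}), which you only invoke later, in Step~3.

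The genuine gap is the enumeration in Step~3.

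\emph{The upper bound is not justified.} Given transitivity, the number of components equals the least $m>0$ with $r^m(C)=C$. Your (a) and (b) only force $d_\lambda\mid m$ (and $m\mid n$), i.e.\ at least $d_\lambda$ components. Transitivity says nothing about whether $r^{d_\lambda}(C)=C$. The missing direction is that tabloids with the same invariant are Knuth-connected, equivalently $C_a\sim_D C_{a+d_\lambda}$. This is the substantive combinatorial fact, and you leave it as an unargued fallback.

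\emph{Your candidate invariants fail already for $\lambda=(1^n)$}, where $d_\lambda=n$. The column-position sum is constant there, so it cannot satisfy (b). The row-weighted entry sum is not Knuth-invariant. Take $n=4$ and $T$ with rows $\ol{1},\ol{2},\ol{3},\ol{4}$ from top to bottom. Then $\tau(T)=\{\ol{1},\ol{2},\ol{3}\}$ and $\tau(s_4(T))=\{\ol{2},\ol{4}\}$ are incomparable, so $D_4(T)$ has rows $\ol{4},\ol{2},\ol{3},\ol{1}$. The sum $\sum_a a\cdot\row(a)$ is $30$ for $T$ and $21$ for $D_4(T)$, a change not divisible by $4$. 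The correct statistic is the charge $\sum_i i\,\lch_i(T)$, which is of major-index type; here it equals $0$ and $1+3=4\equiv 0$.

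The paper does not reprove the count. It invokes \cite[Theorem~8.6]{CLP}: charge modulo $d_\lambda$ is a complete invariant of the components of $\mathcal A_\lambda$. It combines this with \cite[Lemma~8.5]{CLP}: rotation lowers the charge by one modulo $d_\lambda$. Together these give both bounds at once. Without that theorem, or a proof of its connectivity half, the count $d_\lambda$ in part~(1) is not established.
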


The paper is organized as follows.  \hyperref[prelim-sect]{Section~\ref*{prelim-sect}} recalls affine
permutations, tabloids, AMBC, and Marberg's affine FPF $W$-graphs.
\hyperref[Knuth-sect]{Section~\ref*{Knuth-sect}} introduces affine Knuth moves and involutive transformations.
\hyperref[shift-sect]{Section~\ref*{shift-sect}} studies their compatibility with the cyclic shift.
\hyperref[mole-sect]{Section~\ref*{mole-sect}} identifies the bidirected edges of
$\Gamma_n^{\m}$ and derives the classification, enumeration, and
symmetry results for its molecules.  In
\hyperref[sec:affine-rb]{Section~\ref*{sec:affine-rb}} we construct the complete two-cycle row Beissinger
truncations, prove PNAP row-profile stabilization, and reconstruct the complete AMBC datum.

\section{Preliminary}\label{prelim-sect}

\subsection{Affine permutations and involutions}
Let $n$ be a positive integer. Write $\ZZ$ for the set of integers and define $[n] = \{1,2,\dots,n\}$.
Let $\NN = \{0,1,2,\dots\}$ and $\PP=\{1,2,3,\dots\}$ be the sets of nonnegative and positive integers.

\begin{definition}
The \defn{extended affine symmetric group} $\widehat S_n$ is the group
of bijections $\pi:\ZZ\to\ZZ$ satisfying
$\pi(i+n)=\pi(i)+n$ for all $i\in\ZZ$.  The \defn{affine symmetric
group} $\tS_n$ is its subgroup consisting of the elements satisfying
\[
\pi(1)+\pi(2)+\cdots+\pi(n)=1+2+\cdots+n.
\]
\end{definition}

We refer to elements of $\tS_n$ as \defn{affine permutations}.  The
\defn{standard window} of $\pi\in\widehat S_n$ is
\[
[\pi(1),\ldots,\pi(n)].
\]
Its shifted windows are $[\pi(i+1),\ldots,\pi(i+n)]$ for
$i\in\ZZ$.  Any window determines $\pi$ uniquely once its starting
index is specified.  A sequence
$[a_1,\ldots,a_n]$ is the standard window of an element of
$\widehat S_n$ if and only if its entries represent every congruence
class modulo $n$ exactly once.  It is the standard window of an element
of $\tS_n$ if and only if, in addition,
\[
\sum_{k=1}^n a_k=\frac{n(n+1)}2.
\]
For $\pi\in\tS_n$, the shifted window beginning at $i+1$ has sum
$n(n+1)/2+in$.

Let $s_i $ for $i \in \ZZ$  be the unique element of $\tS_n$ that interchanges $i$ and $i+1$ while fixing every integer $j \notin \{i,i+1\} + n\ZZ$.
One has $s_i = s_{i+n}$ for all $i \in \ZZ$, and 
 $\{s_1,s_2,\dots,s_n\}$ generates the group $\tS_n$.
With respect to this generating set, $\tS_n$ is the affine Coxeter group of type $\tilde A_{n-1}$.
The parabolic subgroup $S_n = \langle s_1,s_2,\dots,s_{n-1}\rangle \subset \tS_n$
is the finite Coxeter group of type $A_{n-1}$; its elements
are
the permutations $\pi \in \tS_n$ with $\pi([n]) = [n]$.

A \defn{reduced expression} for $\pi \in \tS_n$ is a minimal-length factorization $\pi = s_{i_1}s_{i_2}\cdots s_{i_l}$.
The \defn{length} of $\pi \in \tS_n$, denoted $\ell(\pi)$, is the number of factors  in any of its reduced expressions.
The value of $\ell(\pi)$ is also the number of equivalence classes in the set
$
\Inv(\pi) = \{ (i,j) \in \ZZ \times \ZZ : i< j\text{ and }\pi(i)> \pi(j)\}
$
under the relation $\sim$
on $\ZZ\times \ZZ$ with $(a,b) \sim (a',b')$ if and only if $a-a' = b-b' \in n \ZZ$.

For $w\in\tS_n$, define its left and right descent sets by
\[
L(w):=\{\widebar i\in\ZZ/n\ZZ:\ell(s_iw)<\ell(w)\},
\qquad
R(w):=\{\widebar i\in\ZZ/n\ZZ:\ell(ws_i)<\ell(w)\}.
\]

From here, let $n$ be an even integer. An affine involution is $z\in\tS_n$ such that $z^2=1$. An affine fixed-point-free involution is an affine involution $z$ such that there are no $x\in[n]$ with $z(x)=x$. The set of all involutions is denoted as $\tilde\cI_n$ while the set of all fixed-point-free involutions is denoted as $\cF_n$. On $\cF_n$, we define $\ellfpf(z) = \tfrac{1}{2}(\ell(z) - \tfrac{n}{2})$.

The conjugation action of $\tS_n$ on $\cF_n$, together with the height
function $\ellfpf$, is a quasiparabolic set
\cite[Theorem~4.8]{Zhang}.  Its \defn{FPF Bruhat order} $\leq_F$ is
the transitive closure of the relations
\[
z\leq_F tzt
\qquad\text{whenever $t$ is a reflection of $\tS_n$ and}
\qquad
\ellfpf(z)\leq\ellfpf(tzt);
\]
see \cite[Definition~4.7]{Zhang}.  We write $<_F$ for the associated
strict order.

\begin{definition}
Given $\pi \in\tS_n$, define $\beta(\pi)=\frac{1}{2n}\sum_{i=1}^n|\pi(i)-r_n(\pi(i))|,$ where $r_n(i)$ for $i \in \ZZ$ denotes the unique element of $[n]$ that satisfies $r_n(i)\equiv i\pmod n$. For $z \in \cF_n$, define $\sfpf(z)=(-1)^{\beta(z)}$.
\end{definition}
Let $\tp =s_1s_3\cdots s_{n-1}= [2,1,4,3,\dots,n,n-1] \in \tilde\cI_n$ and
$\Theta^-= s_2s_4\cdots s_n= [1,0,3,2,\dots,n-1,n-2]  \in \tilde\cI_n$,
so that $\sfpf(\Theta^\pm)=\pm 1$.
We reserve the symbol $\Theta$ to denote one of the two elements of $\{ \Theta^\pm\}$.
Define $\cFp$ as the $\tS_n$-conjugacy class of $\Theta^+$ and $\cFm$ as the $\tS_n$-conjugacy class of $\Theta^-$. One can show that
\be\label{+-eq} \cFp =\{z\in\cF_n : \sfpf(z)=1\} \qquand
 \cFm =\{z\in\cF_n : \sfpf(z)=-1\}\ee
and hence that $\cF_n=\cFp\sqcup\cFm$; see, e.g., \cite[Theorem 5.4]{Marberg2}.

\subsection{Tabloids}
Let $\lambda = (\lambda_1, \ldots, \lambda_{\ell})$ be a partition of size $\sum_i \lambda_i \leq n$. A \defn{tabloid} $P$ of shape $\lambda$ is an equivalence class of fillings of the Young diagram of shape $\lambda$ with elements of $[\ol{n}]$ under identification of fillings that differ by reordering  elements within rows. Here $\ol{i}$ denotes the equivalence class of integers $k\equiv i\pmod{n}$.
We think of the $i$-th row of a tabloid $P$ as a set $P_i\subseteq[\ol{n}]$.

\begin{example}
\[
\yttab{\ol{1}&\ol{2}&\ol{3} \\ \ol{4}} \qquad 
\yttab{\ol{3}&\ol{1}&\ol{2} \\ \ol{4}} \qquad 
\yttab{\ol{4}&\ol{2}&\ol{3}\\ \ol{1}} \qquad 
\yttab{\ol{4}&\ol{1}&\ol{2} \\\ol{3}}\qquad 
\yttab{\ol{1}&\ol{4}&\ol{3}\\ \ol{2}} \qquad 
\]
These are several tabloids of shape $(3,1)$.  The first two tabloids are equal, since they differ only by permuting elements within rows.
\end{example}

The order of elements in each row does not matter, but it is convenient to regard elements of $T_i$ as being ordered with respect to the linear ordering $\ol{1} < \cdots < \ol{n}$. In this case we say that $T$ is presented \defn{row-standard}. In the remaining sections, we assume all tabloids are presented row-standard. For a partition $\lambda\vdash n$, define $\RSYT(\lambda)$ to be the set of row-standard-presented Young tabloid of shape $\lambda$.

We call the analogue of the descent set for tabloids the \defn{$\tau$-invariant}, in reference to Vogan's (generalized) $\tau$-invariant \cite{Vogan}.
\begin{definition}
For a tabloid $T$ filled with all the elements of $[\ol{n}]$, define the \defn{$\tau$-invariant} by
\[
\tau(T) := \{\ol{i}\in [\ol{n}]: \ol{i} \text{ lies in a strictly higher row of } T \text{ than } \ol{i + 1}\}.
\]
\end{definition}
\begin{example}
\label{ex:tabloid tau}
The following tabloid  $T$ has $\tau(T) = \{\ol{2},\ol{5}, \ol{8}\}$.
\[
T := \yttab{\ol{2}&\ol{4}&\ol{5}\\ \ol{6}&\ol{7}&\ol{8}\\ \ol{1}&\ol{3}\\}
\]
\end{example}

\subsection{AMBC}
Here we briefly recall some notations from \cite{CPY} and \cite{CLP} and discuss the relations between the affine matrix-ball construction (abbreviated AMBC) and Kazhdan-Lusztig cells. 

For $T=(T_1,\ldots,T_l)\in\RSYT(\lambda)$ and
$i\in[1,l-1]$ with $\lambda_i=\lambda_{i+1}=m$, write
\[
T_i=(a_1<\cdots<a_m),
\qquad
T_{i+1}=(b_1<\cdots<b_m)
\]
using the representatives in $[n]$.  The \defn{local charge} in row
$i$, denoted $\lch_i(T)$, is the smallest $d\in\{0,1,\ldots,m\}$
such that
\[
a_{j-d}<b_j
\qquad(d+1\leq j\leq m).
\]
Equivalently, it is the number of wraparound pairs in the charge
matching of \cite[Definitions~5.2 and~5.3]{CLP}.  Pictorially, it
measures the shift of $T_i$ to the right needed to make the two rows a
standard Young tableau of skew shape.  For example, if
$T_i=(3,5,7,8)$ and $T_{i+1}=(1,2,4,6)$, then
$\lch_i(T)=2$, as shown in the following picture.
\[\ytableaushort{3578,1246} \quad \Rightarrow\quad \ytableaushort{{\none}{\none}3578,1246}
\]

For $P, Q \in \RSYT(\lambda)$ where $\lambda=(\lambda_1, \ldots, \lambda_l)$, we define the symmetrized offset constants with respect to $(P, Q)$, denoted by $\offset(P,Q)=(s_1, \ldots, s_l) \in \ZZ^l$, as follows.
\[s_i =
\left\{\begin{aligned}
&0& \textnormal{ if } i=1 \textnormal{ or } \lambda_{i-1}>\lambda_i,
\\&s_{i-1}+\lch_{i-1}(P)-\lch_{i-1}(Q)
& \textnormal{ if } \lambda_{i-1}=\lambda_i.
\end{aligned}\right.
\]
In other words, we have $s_i-s_{i-1} = \lch_{i-1}(P)-\lch_{i-1}(Q)$ whenever $\lambda_{i-1}=\lambda_i$. (See \cite[Definition 5.8]{CPY} and \cite[Theorem 5.10]{CLP} for the equivalent definitions.) It is easy to show that for tabloids $P, Q, R$ of the same shape, we have
$$\offset(P,Q)+\offset(Q,R) = \offset(P,R), \textup{ thus in particular } \offset(P,Q)+\offset(Q,P)=\offset(P,P)=0.$$

\begin{example} \label{ex:1}
Let
\[
P= \yttab{\ol{2}&\ol{4}&\ol{6} \\ \ol{3}&\ol{7}&\ol{8} \\ \ol{1}&\ol{5}&\ol{9}}
\qquad\text{and}\qquad
Q= \yttab{\ol{3}&\ol{5}&\ol{7} \\ \ol{1}&\ol{2}&\ol{8} \\ \ol{4}&\ol{6}&\ol{9}}.
\]
Then $\lch_1(P) = 0, \lch_2(P) = 1, \lch_1(Q)=2$, and $\lch_2(Q)=0$. Thus it follows that $\offset(P,Q) = (0,-2,-1)$.  
\end{example}

For $\lambda=(\lambda_1, \ldots, \lambda_r)$ and $\boldsymbol{\rho} = (\rho_1, \ldots, \rho_r)$, we define $\rev_\lambda(\boldsymbol{\rho})$
to be the integer vector obtained from $\boldsymbol{\rho}$ by reversing the coordinates in each maximal block of equal parts of $\lambda$. For example, if $\lambda=(2,2,1,1,1)$ and $\boldsymbol{\rho}=(3,1,5,2,4)$, then $\rev_\lambda(\boldsymbol{\rho})=(1,3,4,2,5)$. We say that $\boldsymbol{\rho}\in\ZZ^{l(\lambda)}$ is \defn{dominant} with respect to $(P,Q)$ if
\[
\rev_\lambda\bigl(\boldsymbol\rho-\offset(P,Q)\bigr)
\quad\text{is weakly decreasing on every maximal equal-part block of $\lambda$.}
\]
This is the blockwise form of the dominance convention in
\cite[Section~5.2]{CLP}; no auxiliary cone notation is needed here.

\begin{example} \label{ex:2}
 In Example~\ref{ex:1}, the vector $\boldsymbol{\rho}=(2,0,2)\in\ZZ^3$ is dominant with respect to $(P,Q)$ because $\boldsymbol{\rho}-\offset(P,Q)=(2,2,3)$ and its blockwise reversal is weakly decreasing.
\end{example}

We set
\begin{align*}
\Omega &\colonequals \bigsqcup_{\lambda\vdash n} \RSYT(\lambda) \times \RSYT(\lambda) \times \ZZ^{l(\lambda)},\\
\widehat{\dom}&\colonequals
\left\{(P,Q,\boldsymbol\rho)\in\Omega:
\boldsymbol\rho\textup{ is dominant with respect to }(P,Q)\right\},\\
\dom &\colonequals \left\{(P,Q,\boldsymbol\rho)\in\widehat{\dom}:
\sum_i\rho_i=0\right\}.
\end{align*}
The affine matrix-ball construction of \cite{CPY} is a bijection
\[
\Phi:\widehat S_n\longrightarrow\widehat{\dom},
\qquad
w\longmapsto(P(w),Q(w),\boldsymbol\rho(w)),
\]
whose restriction gives a bijection $\tS_n\to\dom$.  Backward AMBC
defines a surjection $\Psi:\Omega\to\widehat S_n$, and
\cite[Theorem~5.12]{CPY} gives
$\Psi|_{\widehat{\dom}}=\Phi^{-1}$.  In particular,
$\Psi(\dom)=\tS_n$.  Both constructions are explained in detail in
\cite{CPY,CLP}.

For the left and right descent set $L(w)$ and $R(w)$ for $w$, we have 
\begin{proposition}
\label{prop:descents}
For any permutation $w$, $L(w) = \tau(P(w))$ and $R(w) = \tau(Q(w))$.
\end{proposition}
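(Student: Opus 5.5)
The plan is to derive the statement from the local geometry of AMBC.  This is \cite[Proposition~3.1 / Theorem~5.?]{CPY} in spirit, but I give a self-contained route.  By the symmetry $\Phi(w^{-1})=(Q(w),P(w),-\rev\cdots)$ of AMBC (inversion transposes the ball diagram, swapping the roles of rows and columns), we have $L(w)=R(w^{-1})$ and $P(w)=Q(w^{-1})$.  So it suffices to prove the right-descent statement $R(w)=\tau(Q(w))$ for every $w\in\tS_n$; the left statement then follows by applying it to $w^{-1}$.

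First, translate right descents into a condition on the ball diagram.  We have $\ell(ws_i)<\ell(w)$ if and only if $w(i)>w(i+1)$.  In the matrix picture, this says the balls in columns $i$ and $i+1$ form an inversion: the ball in column $i+1$ lies strictly southwest, rather than northeast, of the ball in column $i$.

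Next, recall how AMBC builds $Q$.  The first row of $Q$ consists of the residues of the columns containing the first balls of the zig-zag streams of the first ``channel'' layer.  The process is then iterated on the set $\mathrm{fw}(w)$ of new balls.  Equivalently, by Greene-type characterizations \cite{CPY}, the row of $\ol j$ in $Q$ is the index of the proper-numbering level containing the ball in column $j$, after the iteration that removes it from the outer layers.

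The key step is a local lemma, proved by induction on the number of AMBC iterations.  Suppose $w(i)>w(i+1)$.  Then in the first proper numbering, the ball in column $i+1$ receives a strictly larger number than the ball in column $i$ (by at most the natural amount) whenever both lie in the same stream structure.  Conversely, if $w(i)<w(i+1)$, the ball in column $i+1$ is numbered weakly less than or equal to the ball in column $i$.  One then checks how the forward step $w\mapsto \mathrm{fw}(w)$ behaves according to which case holds:
\begin{itemize}
\item If column $i$ is retained, so that $\ol i\in Q_1$, and column $i+1$ is not, then the inversion persists.
\item If both columns survive into $\mathrm{fw}(w)$, then the relative order of the new balls in columns $i$ and $i+1$ is the same as before.
\item It is impossible for column $i+1$ to enter $Q_1$ while column $i$ survives when $w(i)>w(i+1)$.
\end{itemize}
Running the induction gives the following: $\ol i$ lies in a strictly higher row of $Q$ than $\ol{i+1}$ exactly when $w(i)>w(i+1)$.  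This is precisely $\ol i\in\tau(Q(w))$.

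The main obstacle is the inductive step: showing that the descent relation between columns $i$ and $i+1$ is faithfully transported to $\mathrm{fw}(w)$, including the wraparound case $i=n$, where periodicity shifts values by $n$.  The dominant weight can change in that step, but it does not affect column positions.  I would handle this by noting that the new balls are placed at the southwest/northeast corners of the stream polygons, and by comparing the two adjacent columns locally, exactly as in the finite matrix-ball proof that RSK recording tableaux detect descents.
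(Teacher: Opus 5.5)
The paper does not prove this statement. It is stated without proof as a known property of AMBC from \cite{CPY,CLP}, so a self-contained argument would be a genuinely different route. Your reduction to $R(w)=\tau(Q(w))$ via $\Phi(w^{-1})=(Q(w),P(w),\ast)$ is fine, and an induction over forward steps comparing positions $i$ and $i+1$ can be made to work. As written, however, the proposal has two genuine gaps.

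\textbf{The local lemma is backwards.} Put the ball of position $r$ at $(r,w(r))$, as in the paper's ball sets $B_y$, and write $d(r)$ for its number. AMBC numbers increase along strictly southeast steps, i.e.\ along increasing pairs; this is why there are $\lambda_1$ zig-zags per period. An ascent $w(i)<w(i+1)$ places the ball of $i+1$ strictly southeast of the ball of $i$, so $d(i+1)>d(i)$. For a descent, every ball strictly northwest of the ball of $i+1$ is strictly northwest of the ball of $i$, so $d(i+1)\le d(i)$. Your sentence that the ball in column $i+1$ lies southwest of the ball in column $i$ cannot hold under any convention. Moreover, the row of $\ol r$ in $Q$ is not a numbering level. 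It is the index of the forward step at which (a translate of) position $r$ carries the topmost ball of its zig-zag, i.e.\ is the row of a back corner-post.

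\textbf{The inductive step is missing.} You flag it as the main obstacle, but it is the whole content of the proposition. Your bullets are neither proved nor the right case split; the first even calls position $i$ ``retained'' when $\ol i\in Q_1$, which means it is removed. What must be shown, for a partial affine permutation in which positions $i,i+1$ are both still defined, is the following.
(a) If $w(i)<w(i+1)$ and $\ol i\in Q_1$, then $\ol{i+1}\in Q_1$.
(b) If $w(i)>w(i+1)$, then $\ol{i+1}\notin Q_1$. Your third bullet gives only half of this: the case where both residues land in $Q_1$ must also be excluded.
(c) If neither lies in $Q_1$, then $\mathrm{fw}(w)$ has the same relative order at $i,i+1$. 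Here $\mathrm{fw}(w)(r)=w(r')$, with $r'$ the position of the next ball above $r$ in its zig-zag.

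All three follow from the corrected inequalities plus two facts you never use. First, from a ball numbered $d$ one can descend through strictly northwest balls numbered $d-1,d-2,\ldots$, by the supremum definition of the channel numbering. Second, within a zig-zag, higher balls lie further east. For example, in (c) for a descent with $d(i)=d(i+1)$, the predecessor of $i+1$ is $i$ itself, so $\mathrm{fw}(w)(i+1)=w(i)<\mathrm{fw}(w)(i)$. If instead $d(i)>d(i+1)$, descend from the ball of $i$ to a ball $c$ numbered $d(i+1)$ strictly northwest of it. This gives $\mathrm{fw}(w)(i+1)\le(\text{column of }c)<w(i)<\mathrm{fw}(w)(i)$.

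Finally, translation by $(n,n)$ shifts all numbers by the channel density. Hence zig-zags and membership in $Q_1$ are translation invariant, and the wraparound case $i=n$ needs no separate treatment. This too has to be argued, not deferred.
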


\subsection{Affine \texorpdfstring{$\fpf$}{FPF} graphs and molecules}
The following definitions are from Stembridge's papers \cite{Stem1,Stem2}.
Set $\cA=\ZZ[v,v^{-1}]$.  For a Coxeter system $(W,S)$ with length
function $\ell$, let $\cH=\cH(W,S)$ be its Iwahori--Hecke algebra over
$\cA$.  Thus $\cH$ has the standard basis $\{H_w:w\in W\}$ determined
by
\[
H_sH_w=
\begin{cases}
H_{sw},&\ell(sw)>\ell(w),\\
H_{sw}+(v-v^{-1})H_w,&\ell(sw)<\ell(w),
\end{cases}
\qquad(s\in S,w\in W).
\]

\begin{definition}
An \defn{$I$-labeled graph} for a finite set $I$ is a triple $\Gamma=(V,\omega,\nu)$ where
\begin{itemize}
\item[(i)] $V$ is a vertex set;
\item[(ii)] $\omega: V\times V\to \cA$ is a locally finite map, meaning
that for each $x\in V$ only finitely many $y\in V$ satisfy
$\omega(x,y)\neq0$;
\item[(iii)] $\nu: V\to \mathcal{P}(I)$ is a map assigning a subset of $I$ to each vertex.
\end{itemize}
We view $\Gamma$ as a weighted directed graph on the vertex set $V$ with an edge  $x\xrightarrow{\omega(x,y)} y$ if $\omega(x,y)\neq 0$.  An edge between $x$ and $y$ is \defn{bidirected} if both $\omega(x,y)$ and $\omega(y,x)$ are nonzero.
\end{definition}

\begin{definition}
An $S$-labeled graph $\Gamma=(V,\omega,\nu)$ is a \defn{$W$-graph} if the free $\cA$-module generated by $V$ can be given an $\cH$-module structure with
\[
H_sx=\begin{cases}vx&s\not\in\nu(x)\\
-v^{-1}x+\sum_{y\in V;s\not\in\nu(y)}\omega(x,y)y&s\in\nu(x)\end{cases}
\qquad \text{ for }s\in S\text{ and }x\in V.
\]
\end{definition}

\begin{example}
There exist a unique ring homomorphism $\cH\to\cH$ with $v\mapsto v^{-1}$ and $H_s\mapsto H_s^{-1}$; we denote this map by $H\mapsto\overline{H}$, and refer to it as the \defn{bar operator} of $\mathcal{H}$.
Write $<$ for the Bruhat order on $W$.
By well-known results of Kazhdan and Lusztig \cite{KL},
for each $w\in W$ there is a unique $\underline H_w\in\mathcal{H}$ with
$$\underline{H}_w = \overline{\underline{H}}_w\in H_w+\sum_{y<w}v^{-1}\ZZ[v^{-1}]H_y.$$
The elements $\{\underline{H}_w\}_{w\in W}$ form an $\cA$-basis for $\mathcal{H}$, called the \defn{Kazhdan-Lusztig basis}.
Define $h_{x,y}\in\ZZ[v^{-1}]$ for $x,y \in W$ such that $\underline{H}_y=\sum_{x\in W}h_{x,y}H_x$,
%Then $h_{x,y}=0$ if $x\nless y$ and $h_{1,s}=1$ for $s\in S$. Define 
and let $\mu(x,y)$ be the coefficient of $v^{-1}$ in $h_{x,y}$. 
Finally, let 
$$ \nu(x) = \{ s \in S : \ell(sx) < \ell(x)\}\qquand \omega(x,y)=\begin{cases}\mu(x,y)&\text{if }\nu(x)\not\subset\nu(y)\\0,&\text{otherwise}\end{cases}$$
for $x,y \in W$.
Then $\Gamma = (W,\omega,\nu)$ is a $W$-graph \cite{KL}.
\end{example}

We turn back to the case $W=\tS_n$. Let $\cM =\cA\text{-span}\{M_z:z\in\cF_n\}$ and $
\cN=\cA\text{-span}\{N_z:z\in\cF_n\}$
denote the free $\cA$-modules with bases given by the symbols $M_z$ and $N_z$ for $z\in\cF_n$. We call $\{M_z\}_{z\in\cF_n}$ and $\{N_z\}_{z\in\cF_n}$ the standard bases of $\cM$ and $\cN$, respectively. 
Let $S = \{s_1,s_2,\dots,s_n\}$.

For $k\in\ZZ$, let $\alpha_k:S\to S$ be the cyclic Dynkin-diagram
automorphism
\[
\alpha_k(s_i)=s_{i+k}.
\]
If $\Gamma=(V,\omega,\nu)$ and
$\Gamma'=(V',\omega',\nu')$ are $S$-labeled weighted directed graphs,
an \defn{$\alpha_k$-twisted isomorphism} is a bijection
$f:V\to V'$ such that
\[
\nu'(f(x))=\alpha_k(\nu(x))
\qquad\text{and}\qquad
\omega'(f(x),f(y))=\omega(x,y)
\]
for all $x,y\in V$.  We use the same terminology for the graphs
obtained by retaining only bidirected edges and their two directed
weights.

\begin{proposition}[{\cite[Corollary 4.15]{Zhang}}]
Both $\cM$ and $\cN$ have unique $\cH$-module structures such that if $s\in  S$ and $z\in\cF_n$ then we have 
{\small\[H_sM_z=\begin{cases}
M_{szs}&\ellfpf(szs)>\ellfpf(z)\\
vM_{z}&\ellfpf(szs)=\ellfpf(z)\\
M_{szs}+(v-v^{-1})M_z&\ellfpf(szs)<\ellfpf(z)\end{cases}
\]}
and
{\small\[H_sN_z=\begin{cases}
N_{szs}&\ellfpf(szs)>\ellfpf(z)\\
-v^{-1}N_{z}&\ellfpf(szs)=\ellfpf(z)\\
N_{szs}+(v-v^{-1})N_z&\ellfpf(szs)<\ellfpf(z).\end{cases}
\]}
\end{proposition}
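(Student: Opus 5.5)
The uniqueness half is immediate. $\cH$ is generated as an $\cA$-algebra by $\{H_s : s\in S\}$, and the displayed formulas prescribe $H_sM_z$ and $H_sN_z$ on the $\cA$-bases $\{M_z\}$ and $\{N_z\}$. So at most one module structure can have this form. The real content is existence: I would show that the endomorphisms $T_s$ of $\cM$ (resp.\ $\cN$) defined by these formulas satisfy the defining relations of $\cH(\tS_n,S)$. These are the quadratic relation $(T_s-v)(T_s+v^{-1})=0$ and the braid relations $T_sT_tT_s\cdots=T_tT_sT_t\cdots$ with $m(s,t)$ factors on each side.

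\emph{Quadratic relation.} Fix $s$ and $z$ and split into the three cases of the formula.
\begin{itemize}
\item If $\ellfpf(szs)=\ellfpf(z)$, then $szs=z$. Otherwise $szs$ and $z$ would be distinct elements of equal height related by the reflection $s$, which is excluded by the quasiparabolic axioms of \cite[Theorem~4.8]{Zhang}. Hence $M_z$ is an eigenvector with eigenvalue $v$, and $N_z$ one with eigenvalue $-v^{-1}$. Both values are roots of $(x-v)(x+v^{-1})$.
\item If $\ellfpf(szs)\neq\ellfpf(z)$, the span of $\{M_z,M_{szs}\}$ is $T_s$-stable. On it, $T_s$ acts by the standard matrix $\begin{pmatrix}0&1\\1&v-v^{-1}\end{pmatrix}$ in the basis $(M_{\min},M_{\max})$, which satisfies the quadratic relation by a one-line computation. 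The same holds verbatim for $\cN$.
\end{itemize}

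\emph{Braid relations.} For $s,t\in S$ with $m=m(s,t)\in\{2,3\}$ (as $n\ge 4$ is even), the subspace spanned by $M_y$ for $y$ in the $\langle s,t\rangle$-conjugation orbit of a given $z$ is stable. So it suffices to check the relation orbit by orbit. Each such orbit is finite since $\langle s,t\rangle$ is a finite dihedral group. The quasiparabolic axioms (the height function changes by at most one under a simple reflection, and the ``exchange'' condition) force each orbit, viewed as a graded set under $\ellfpf$, to be isomorphic to a quotient $\langle s,t\rangle/\langle u\rangle$. Here $\langle u\rangle$ is either trivial or generated by a single simple reflection fixing the minimal element. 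This is exactly the content of Rains--Vazirani's analysis of rank-two quasiparabolic sets.
\begin{itemize}
\item On a free orbit, the module is the regular representation of the rank-two Hecke algebra.
\item On an orbit with stabilizer $\langle u\rangle$, the module for $\cM$ (resp.\ $\cN$) is the parabolic induced module $\cH_{\langle s,t\rangle}\otimes_{\cH_{\langle u\rangle}}\cA_\epsilon$. In this module $H_u$ acts on the minimal element by $\epsilon=v$ (resp.\ $\epsilon=-v^{-1}$), and it is automatically an $\cH_{\langle s,t\rangle}$-module.
\end{itemize}
Matching our formulas with these induced modules on each orbit proves the braid relations.

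The main obstacle is the orbit classification: checking that every $\langle s,t\rangle$-orbit in $\cF_n$ is of one of these two shapes, with fixed points only at the bottom or top in a compatible way. I would take this directly from the quasiparabolicity of $(\cF_n,\ellfpf)$ established in \cite[Theorem~4.8]{Zhang}, together with \cite[Theorem~7.1]{RV}-style arguments, which produce precisely the two modules with eigenvalues $v$ and $-v^{-1}$ on fixed points. Alternatively, one can verify the classification directly for $m=3$ by a small case analysis on how $z$ pairs the residues $i,i+1,i+2$.
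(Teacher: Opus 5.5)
\textbf{Gap: the rank-two orbit classification fails for commuting pairs.} Your uniqueness argument, your reduction of the braid relations to $\langle s,t\rangle$-orbits, and your treatment of the quadratic relation are fine. The braid-relation step, however, rests on a classification of orbits that is false when $m(s,t)=2$.

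Take $n=4$, $s=s_1$, $t=s_3$ and $z=[3,4,1,2]\in\cF_4$. Then $s_1zs_1=s_3zs_3=[4,3,2,1]\neq z$, and $\ellfpf$ increases because $z(1)<z(2)$. So the orbit is $\{z,szs\}$, its minimal element is fixed by neither generator, and its stabilizer is $\langle s_1s_3\rangle$, which is not generated by a simple reflection. On this orbit $T_s$ and $T_t$ both act by the non-scalar matrix $\begin{pmatrix}0&1\\1&v-v^{-1}\end{pmatrix}$ in the basis $(M_z,M_{szs})$. This module is not the regular module of $\cH_{\langle s,t\rangle}$, which has rank $4$. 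Nor is it $\cH_{\langle s,t\rangle}\otimes_{\cH_{\langle u\rangle}}\cA_\epsilon$, because for $m(s,t)=2$ the generator $T_u$ acts there by the scalar $\epsilon$.

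Similarly, $\Theta^+=[2,1,4,3]$ is fixed by both $s_1$ and $s_3$, so a stabilizer can be all of $\langle s,t\rangle$. Quasiparabolicity does not exclude such orbits: non-parabolic stabilizers of minimal elements are exactly what the ``quasi'' permits. So this classification cannot be extracted from the Rains--Vazirani axioms, and your step of matching each orbit with an induced module leaves these orbits uncovered.

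\textbf{Repair for $m(s,t)=2$.} Let $s=s_i$ and $t=s_j$. Since $t$ fixes $i$ and $i+1$, one has $(tzt)(i)=t(z(i))$ and $(tzt)(i+1)=t(z(i+1))$. Hence conjugation by $t$ preserves the condition $z(i)=i+1$ and the relative order of $z(i),z(i+1)$, unless $\{z(i),z(i+1)\}=\{j+kn,j+1+kn\}$; the symmetric statement holds with $s$ and $t$ exchanged. Outside that case, the local rule for $\ellfpf(s_izs_i)-\ellfpf(z)$ (from \cite[Corollary~3.16]{Zhang}, quoted in the proof of Proposition~\ref{fpf-descent-label-prop}) makes the two rank-one actions independent, so $T_sT_t=T_tT_s$. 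In the exceptional case $szs=tzt$, and a two-line computation on $\{M_z,M_{szs}\}$ gives the same conclusion.

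\textbf{The case $m(s,t)=3$ is fine.} With $\{s,t\}=\{s_i,s_{i+1}\}$, your classification does hold in $\cF_n$. No fixed-point-free involution preserves $\{i,i+1,i+2\}$. No $3$-cycle can stabilize $z$. If $s_izs_i=z$, then $z(i)=i+1$, which produces a three-element chain $S_3/\langle u\rangle$.

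With these cases supplied, your direct verification goes through. It amounts to reproving, in this instance, the Rains--Vazirani theorem that every quasiparabolic set carries such a Hecke module. The paper gives no argument of its own: it cites \cite[Corollary~4.15]{Zhang}, which applies that theorem to the quasiparabolic set of \cite[Theorem~4.8]{Zhang}.
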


Rains and Vazirani's general theory of quasiparabolic sets gives us for free the $\mathcal{H}$-module structures described in the previous result. These modules are potentially interesting to study on their own. We note a few special properties which follow from results in \cite{Marberg}.

We write $f \mapsto \overline{f}$ for the automorphism of $\cA$ interchanging $v$ and $v^{-1}$. A map $U\to V$ of $\cA$-modules is \defn{$\cA$-antilinear} if $x\mapsto y$ implies $ax\mapsto\overline{a}y$ for all $a\in\cA$.

\begin{corollary}[{\cite[Corollary 4.16]{Zhang}}]
The $\cH$-modules $\cM$ and $\cN$ have the following properties:
\begin{itemize}
\item[(a)] There are unique $\cA$-antilinear maps $\cM\to\cM$ and $\cN\to\cN$, which we write as $X \mapsto\overline X$,
with 
$$\overline{HM}=\overline{H}\cdot\overline{M}\text{ and }\overline{M_{\Theta}}=M_{\Theta}
\qquand
\overline{HN}=\overline{H}\cdot\overline{N}\text{ and }\overline{N_{\Theta}}=N_{\Theta}
$$
for all $M \in \cM$, $N \in \cN$, and $\Theta \in \{\Theta^\pm\}$. Moreover, both of these maps are involutions.

\item[(b)] The $\mathcal{H}$-modules
$\cM$ and $\cN$ have unique $\cA$-bases $\{\underline{M}_x\}_{x \in \cF_n}$ and $\{\underline{N}_x\}_{x \in \cF_n}$ satisfying
$$\underline{M}_x=\overline{\underline{M}_x}\in M_x+\sum_{w<_F x}v^{-1}\ZZ[v^{-1}]M_w
\qquand
\underline{N}_x=\overline{\underline{N}_x}\in N_x+\sum_{w<_F x}v^{-1}\ZZ[v^{-1}]N_w$$
where both sums are over $w\in\cF_n$. We refer to these as  the \defn{canonical bases} of $\cM$ and $\cN$.

\end{itemize}
\end{corollary}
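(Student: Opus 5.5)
The plan is to deduce both parts from Rains and Vazirani's general theory of quasiparabolic sets, applied to the quasiparabolic set $(\cF_n,\ellfpf)$ of \cite[Theorem~4.8]{Zhang}. I then run Lusztig's standard triangularity argument for (b). I treat $\cM$ in detail; $\cN$ is identical, replacing $v$ by $-v^{-1}$ in the ``height-preserving'' case.

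\emph{Part (a).} First I show that each conjugacy class $\cF_n^{\pm}$ has a unique element of minimal height, namely $\Theta^\pm$, with $\ellfpf(\Theta^\pm)=0$. From the multiplication rule, every $z\in\cF_n^\pm$ is reached from $\Theta^\pm$ by a chain $z=s_{i_k}\cdots s_{i_1}\Theta s_{i_1}\cdots s_{i_k}$ in which the height increases at each step. Hence $M_z=H_{s_{i_k}}\cdots H_{s_{i_1}}M_\Theta$, so $\cM$ is generated by $M_{\Theta^+}$ and $M_{\Theta^-}$, and $\cM=\cH M_{\Theta^+}\oplus\cH M_{\Theta^-}$.

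I then define $\overline{HM_\Theta}:=\overline H\,M_\Theta$. This is well defined exactly when the left ideal $\mathrm{Ann}_\cH(M_\Theta)$ is stable under the bar operator of $\cH$. The quasiparabolic theory identifies this annihilator with the left ideal generated by $H_s-v$ for those $s$ with $s\Theta s=\Theta$; these are $s_1,s_3,\dots,s_{n-1}$ for $\Theta^+$. This is where the uniqueness of the minimal element of each orbit is used. Since $\overline{H_s-v}=H_s^{-1}-v^{-1}=-v^{-1}H_s^{-1}(H_s-v)$, the ideal is bar-stable. The map is then antilinear and satisfies $\overline{HM}=\overline H\,\overline M$ by construction. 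It is unique because the $M_\Theta$ generate. It is an involution because its square is an $\cH$-linear endomorphism fixing both generators.

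\emph{Part (b).} By induction on $\ellfpf(x)$, I show that
\[
\overline{M_x}\in M_x+\sum_{w<_F x}\cA\,M_w .
\]
For the inductive step, write $x=sys$ with $\ellfpf(y)<\ellfpf(x)$, so that $M_x=H_sM_y$. Then
\[
\overline{M_x}=\overline{H_s}\,\overline{M_y}=\bigl(H_s-(v-v^{-1})\bigr)\overline{M_y}.
\]
Expanding with the three-case multiplication rule, the new terms are $M_{sws}$ and $M_w$ for $w\le_F y$. By the definition of $\le_F$ as generated by $z\le_F tzt$, every such index lies strictly below $x$, except the leading term $M_x$, which appears with coefficient one.

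I also need the lower intervals $\{w:w\le_F x\}$ to be finite. This holds because $\ellfpf$ is strictly increasing along $<_F$, and there are only finitely many FPF involutions of a given height (their inversion classes are bounded).

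With a unitriangular antilinear involution and finite lower intervals, Lusztig's lemma produces the unique bar-invariant $\underline M_x\in M_x+\sum_{w<_Fx}v^{-1}\ZZ[v^{-1}]M_w$. The construction is recursive on the interval below $x$: at each step one splits an $\cA$-coefficient into its $v^{-1}\ZZ[v^{-1}]$ part and its bar-symmetric remainder. Uniqueness holds because a bar-invariant element of $\sum_w v^{-1}\ZZ[v^{-1}]M_w$ must vanish.

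The main obstacle is the well-definedness in (a), namely the exact description of $\mathrm{Ann}_\cH(M_\Theta)$. I would obtain it from Rains and Vazirani's theorem for quasiparabolic sets with a unique minimal element per orbit, rather than verifying it by hand.
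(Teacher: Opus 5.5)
\textbf{The gap is in part (a).} It sits at exactly the step you flagged, and the fact you plan to import there is false for every even $n\geq4$.

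The simple reflections fixing $\Theta^+$ are $J=\{s_1,s_3,\dots,s_{n-1}\}$. However, the stabilizer of $\Theta^+$, which is its centralizer, is strictly larger than $W_J$. Put $y=s_2\Theta^+s_2$, which pairs $1\leftrightarrow3$ and $2\leftrightarrow4$. Then $s_1ys_1$ and $s_3ys_3$ are the same involution: it pairs $1\leftrightarrow4$ and $2\leftrightarrow3$, with all other pairs as in $\Theta^+$. Every step here raises $\ellfpf$. Hence $H_{s_1}H_{s_2}M_{\Theta^+}=H_{s_3}H_{s_2}M_{\Theta^+}$, so $(H_{s_1}-H_{s_3})H_{s_2}\in\mathrm{Ann}_{\cH}(M_{\Theta^+})$.

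Now compare with your proposed ideal. The quotient $\cH/\sum_{s\in J}\cH(H_s-v)\cong\cH\otimes_{\cH_J}\cA$ is free on the elements $H_u\otimes 1$, where $u$ runs over minimal-length representatives of the cosets $uW_J$. Both $s_1s_2$ and $s_3s_2$ are such representatives, and they are distinct. So $(H_{s_1}-H_{s_3})H_{s_2}$ does not lie in the left ideal you describe. The same thing happens in the finite analogue $S_4$: the cyclic module there has rank $3$, while the induced module has rank $6$.

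Consequences for your argument:
\begin{itemize}
\item $\cH M_\Theta$ is a proper quotient of the induced module, and no Rains--Vazirani theorem gives the presentation you assert.
\item Checking that the elements $H_s-v$ are bar-stable therefore says nothing about the rest of the annihilator.
\item Bar-stability of the full annihilator is the real content of (a). In this example it does hold: $\overline{(H_{s_1}-H_{s_3})H_{s_2}}=(H_{s_1}-H_{s_3})\bigl(H_{s_2}-(v-v^{-1})\bigr)$ still kills $M_{\Theta^+}$. In general, though, it is a genuine theorem.
\end{itemize}
The paper does not reprove the statement. It cites \cite[Corollary~4.16]{Zhang}, which combines the quasiparabolic structure of \cite[Theorem~4.8]{Zhang} with Marberg's bar-operator results for quasiparabolic conjugacy classes \cite{Marberg}. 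You need either to invoke that theorem, or, for example, to define $\overline{M_x}$ along height-increasing chains and prove independence of the chain, which requires a real rank-two analysis. The same issue arises for $\cN$.

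\textbf{Two smaller points.} First, the claim that every $z\in\cF_n^\pm$ is reached from $\Theta^\pm$ by a height-increasing chain does not follow from the multiplication rule. You need that $\Theta^\pm$ are the only elements of $\cF_n$ with no $i$ satisfying $z(i)>z(i+1)\neq i$. This is easy to supply: undoing the adjacent swaps with $z(i)=i+1$ produces an increasing element of $\tS_n$, hence the identity, and fixed-point-freeness then forces $z=\Theta^\pm$.

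Second, in (b), take $w<_F y$ with $\ellfpf(sws)>\ellfpf(w)$. The conclusion $sws<_F x=sys$ does not follow from the definition of $\leq_F$ as a transitive closure. It is the lifting property of the quasiparabolic Bruhat order, due to Rains and Vazirani, and you should cite it. With (a) genuinely in hand, the rest of your Lusztig-lemma argument for (b) is fine.
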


Define $\m_{x,y}$ and $\n_{x,y}$ for $x,y\in \cF_n$ as the polynomials in $\ZZ[v^{-1}]$ such that
$$\underline{M}_y=\sum_{x\in\cF_n}\m_{x,y}M_x\qquand
\underline{N}_y=\sum_{x\in\cF_n}\n_{x,y}N_x.$$
Let $\mu_\m(x,y)$ and $\mu_\n(x,y)$ denote the coefficients of $v^{-1}$ in $\m_{x,y}$ and $\n_{x,y}$.
Define $\nu_\m$, $\nu_\n$: $\cF_n\to \mathcal{P}(S)$ by
\[
\nu_\m(x) = \{s\in S : sxs \le_F x\}\qquand \nu_\n(x) = \{s\in S : x\le_F sxs\}
\]
where $S = \{s_1,s_2,\dots,s_n \} \subset \tS_n$.
Finally, let $\omega_\m$: $\cF_n\times\cF_n\to\ZZ$ be the map with
\[
\omega_\m(x,y)=\begin{cases}\mu_\m(x,y)+\mu_\m(y,x)&\nu_\m(x)\not\subset\nu_\m(y)\\
0&\nu_\m(x)\subset\nu_\m(y).\end{cases}
\]
Define $\omega_\n$: $\cF_n\times\cF_n\to\ZZ$ by the same formula, but with $\mu_\m$ and $\nu_\m$ replaced by $\mu_\n$ and $\nu_\n$.

\begin{corollary}[{\cite[Theorem 3.26]{Marberg}}]
Both $\Gamma^\m_n=(\cF_n,\omega_\m,\nu_\m)$ and $\Gamma^\n_n=(\cF_n,\omega_\n,\nu_\n)$ are $\tS_n$-graphs.
\end{corollary}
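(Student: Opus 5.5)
The plan is to derive the $\tS_n$-graph structure directly from the canonical bases of the preceding Corollary, following the standard Kazhdan--Lusztig argument adapted to quasiparabolic sets, as in \cite{Marberg}. I treat $\cM$; the case of $\cN$ is identical, with the roles of the equal-height case $\ellfpf(szs)=\ellfpf(z)$ exchanged between the eigenvalues $v$ and $-v^{-1}$.

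First, I will record the elementary facts about the standard basis.
\begin{itemize}
\item[(i)] For $s\in S$ and $z\in\cF_n$, exactly one of $szs<_F z$, $szs=z$, $z<_F szs$ holds.
\item[(ii)] From the multiplication rules, $H_s$ satisfies $(H_s-v)(H_s+v^{-1})=0$ on $\cM$.
\item[(iii)] $\overline{H_s}=H_s^{-1}=H_s-(v-v^{-1})$.
\end{itemize}

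Next, fix $x$ and $s$.
\begin{itemize}
\item[(a)] If $s\notin\nu_\m(x)$, so that $x<_F sxs$, then $(H_s+v^{-1})\underline M_x$ is bar-invariant. Its leading term is $M_{sxs}$, and its other coefficients lie in $v^{-1}\ZZ[v^{-1}]$ once the correction $\sum \mu_\m(w,x)\underline M_w$ is subtracted. This gives
\[
H_s\underline M_x=-v^{-1}\underline M_x+\underline M_{sxs}+\sum_{w<_F x,\ sws<_F w}\mu_\m(w,x)\underline M_w .
\]
\item[(b)] If $s\in\nu_\m(x)$, I will show by induction on $\ellfpf(x)$, using (a) applied to $sxs$ when $sxs<_F x$, that $H_s\underline M_x=v\underline M_x$. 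This amounts to showing that $\underline M_x$ lies in the $v$-eigenspace of $H_s$. In the equal-height case $sxs=x$ this is immediate from $H_sM_z=vM_z$ and the triangularity of $\underline M_x$.
\end{itemize}

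Comparing (a) and (b) with the definition of a $W$-graph shows that the module is a $W$-graph with vertex labels given by the complement of $\nu_\m$. Equivalently, after twisting by the involution $H_s\mapsto -\overline{H_s}$ of $\cH$, the labels are exactly $\nu_\m$. The edge weights are read off from the expansion in (a). I will combine the term $\underline M_{sxs}$ (for which $\mu_\m(x,sxs)=1$) with the sum into a single expression $\sum_{y:\,s\notin\nu_\m(y)}\omega_\m(x,y)\underline M_y$ with $\omega_\m(x,y)=\mu_\m(x,y)+\mu_\m(y,x)$. Since $\mu_\m(x,y)\ne0$ forces $x<_F y$, at most one of the two summands is nonzero. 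The condition $\nu_\m(x)\not\subset\nu_\m(y)$ is precisely the condition for $s$ to appear in the sum for some $s\in\nu_\m(x)\setminus\nu_\m(y)$. Local finiteness holds because each $\underline M_x$ is a finite sum, and the sum over $y$ with $\mu_\m(y,x)\ne0$ is bounded by the finitely many $y$ covered via $s$-moves.

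The main obstacle is step (b), together with showing that the coefficient of $\underline M_y$ for $x<_F y$ with $sys<_F y$ is exactly $\mu_\m(x,y)$ and vanishes otherwise, i.e.\ the analogue of the KL identity $\mu(x,y)\ne0,\ x<y \Rightarrow$ ($y=sx$ or $s\in\nu(y)$). Here I would adapt Marberg's argument: show that $\m_{x,y}=v^{-1}\m_{sxs,y}$-type recursions hold whenever $sys<_F y$. This uses the quasiparabolic axioms of \cite[Theorem~4.8]{Zhang}, namely the exchange property for reflections with respect to $\ellfpf$, in place of the usual Coxeter exchange property.
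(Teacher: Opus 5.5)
The paper does not prove this statement; it quotes it from Marberg's Theorem~3.26, whose key input is the $\mu$-vanishing lemma the paper later uses as Marberg's Lemma~3.25(a). Your outline has the right ingredients, but the step where you identify your module with $(\cF_n,\omega_{\m},\nu_{\m})$ fails.

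By your own (a) and (b), in the basis $\{\underline M_x\}$ the vector $\underline M_x$ is a pure $v$-eigenvector of $H_s$ exactly when $s\in\nu_{\m}(x)$. For $s\notin\nu_{\m}(x)$, the off-diagonal terms go to vertices $y$ with $s\in\nu_{\m}(y)$. Granting the vanishing lemma, their coefficient is $\mu_{\m}(x,y)+\mu_{\m}(y,x)$, under the condition $s\in\nu_{\m}(y)\setminus\nu_{\m}(x)$. So what you have realized is $(\cF_n,\omega_{\m}^{\top},S\setminus\nu_{\m})$, where $\omega_{\m}^{\top}(x,y)=\omega_{\m}(y,x)$.

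Twisting by $H_s\mapsto-\overline{H_s}=-H_s^{-1}$ does not turn this into $\Gamma^{\m}_n$. An algebra automorphism exchanges the diagonal eigenvalues $v$ and $-v^{-1}$, but it leaves every off-diagonal entry in its column. After the twist, each $\underline M_x$ with $s\in\nu_{\m}(x)$ is still a pure eigenvector. The definition instead requires $H_sx=-v^{-1}x+\sum_{s\notin\nu_{\m}(y)}\omega_{\m}(x,y)y$, and these weights do not vanish: for instance $\omega_{\m}(sys,y)=\mu_{\m}(y,sys)=1$ whenever $y<_F sys$. This is why your combined expression $\sum_{y:\,s\notin\nu_{\m}(y)}\omega_{\m}(x,y)\underline M_y$ contradicts (a), where every $y$ has $s\in\nu_{\m}(y)$. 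It is also why the condition you invoke, $s\in\nu_{\m}(x)\setminus\nu_{\m}(y)$, is the reverse of the one that actually occurs.

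The missing idea is a transposition, i.e.\ a contragredient. Let $A_s$ be the matrix of $H_s$ in the basis $\{\underline M_x\}$. The matrices $-A_s^{\top}+(v-v^{-1})\,\mathrm{id}$ still satisfy the quadratic and braid relations, because transposition reverses products and the reverse of one side of a braid relation is a side of the same relation. These matrices have exactly the required shape with labels $\nu_{\m}$ and weights $-\omega_{\m}$. The sign is then removed by the semilinear automorphism $v\mapsto-v$, $H_s\mapsto-H_s$ of $\cH$.

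This transposition is legitimate only if every row of $A_s$ is finite. That means: for each $x$ there are only finitely many $y>_F x$ with $\mu_{\m}(x,y)\neq0$ and $\nu_{\m}(x)\not\subset\nu_{\m}(y)$. This is precisely the local finiteness of $\omega_{\m}$ required by the definition. Your argument via $y=sxs$ does not give it, because the vanishing lemma only controls the opposite configuration $s\in\nu_{\m}(y)\setminus\nu_{\m}(x)$.

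The case of $\cN$ is also not ``identical''. There $\underline N_x$ is a pure $v$-eigenvector exactly when $sxs<_F x$, and equal-height vertices are in general not eigenvectors. So the $\underline N$-action realizes $(\cF_n,\omega_{\n},\nu_{\n})$ directly, with no transposition.

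Two smaller points:
\begin{itemize}
\item In (a) the sum must include the $w$ with $sws=w$. There $(H_s+v^{-1})M_w=(v+v^{-1})M_w$, which contributes $\mu_{\m}(w,x)$.
\item The case $sxs=x$ of (b) is not immediate from $H_sM_x=vM_x$. One route: $(H_s-v)\underline M_x$ is a bar-invariant element of the $(-v^{-1})$-eigenspace, and its coordinates in the bar-unitriangular basis $\{(H_s-v)M_u:u<_F sus\}$ lie in $v^{-1}\ZZ[v^{-1}]$, so it vanishes.
\end{itemize}
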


\begin{proposition}[Descent labels in AMBC]
\label{fpf-descent-label-prop}
Let $x\in\cF_n$ and suppose that
$\Phi(x)=(P,P,\rho)$.  Under the identification
$s_i\leftrightarrow\widebar i$ between $S$ and $\ZZ/n\ZZ$, one has
\[
\nu_\m(x)=L(x)=R(x)=\tau(P).
\]
\end{proposition}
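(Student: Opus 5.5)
The plan is to prove the three equalities $\nu_\m(x)=L(x)$, $L(x)=R(x)$ and $R(x)=\tau(P)$ one at a time. The last two are essentially formal. Since $x=x^{-1}$, we have $\ell(xs_i)=\ell((s_ix)^{-1})=\ell(s_ix)$, so $L(x)=R(x)$. By Proposition~\ref{prop:descents}, $L(x)=\tau(P(x))$ and $R(x)=\tau(Q(x))$. The statement assumes $\Phi(x)=(P,P,\rho)$; as a consistency check one can note that for any $w$, AMBC satisfies $\Phi(w^{-1})=(Q(w),P(w),\cdot)$. Either way, both descent sets equal $\tau(P)$.

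The real content is the equality $\nu_\m(x)=L(x)$. Fix $s=s_i$ and compare $\ellfpf(sxs)$ with $\ellfpf(x)$. Since $sxs=s\,x\,s$ is obtained from $x$ by a reflection conjugation, the relation $sxs\le_F x$ holds exactly when $\ellfpf(sxs)\le\ellfpf(x)$. This is the quasiparabolic order, and for simple $s$ the two elements are comparable through the generating relation. So $s\in\nu_\m(x)$ if and only if either $sxs=x$, or $sxs\ne x$ with $\ellfpf(sxs)<\ellfpf(x)$.

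We then analyse cases directly on $x$, using the inversion description of $\ell$.

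\emph{Case $x(i)=i+1$.} Then $sxs=x$, so $s\in\nu_\m(x)$. Also $x(i)=i+1>i=x(i+1)$, so $(i,i+1)\in\Inv(x)$ and $\ell(xs)<\ell(x)$, i.e.\ $\bar i\in R(x)$.

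\emph{Case $x(i)\ne i+1$.} Then $sxs\ne x$. Since $\ell(sxs)=\ell(x)\pm 2$, we get $\ellfpf(sxs)=\ellfpf(x)\pm1$. Moreover the sign is decreasing exactly when $x(i)>x(i+1)$: conjugating by $s_i$ swaps positions and values $i,i+1$ simultaneously, and when $x(i)\neq i+1$ both changes have the same sign. This is the standard quasiparabolic fact that $\ell(sxs)-\ell(x)=2(\ell(xs)-\ell(x))$ in the non-fixed case. It yields $\ellfpf(sxs)<\ellfpf(x)$ if and only if $\ell(xs)<\ell(x)$, i.e.\ $\bar i\in R(x)$.

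Combining the two cases gives $\nu_\m(x)=R(x)=L(x)$.

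The main obstacle is the length computation in the second case: showing $\ell(s_ixs_i)=\ell(x)+2(\ell(xs_i)-\ell(x))$ whenever $s_ixs_i\neq x$. I would verify it by counting inversion classes. Right multiplication by $s_i$ changes the inversion count by $\pm1$, according to the comparison of $x(i)$ and $x(i+1)$. Left multiplication by $s_i$ changes it by $\pm1$, according to the comparison of $x^{-1}(i)=x(i)$ and $x^{-1}(i+1)=x(i+1)$. These two comparisons are the same, and after the first multiplication the relevant comparison is unchanged unless $\{x(i),x(i+1)\}=\{i,i+1\}$ modulo the periodicity. That exceptional configuration is exactly the fixed case $x(i)=i+1$ (fixed-point-freeness excludes $x(i)=i$). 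A second point to check is that $\le_F$ restricted to pairs $z,szs$ really reduces to comparing $\ellfpf$; this follows from the definition of $\le_F$ as the transitive closure of reflection relations with weakly increasing $\ellfpf$, together with $\ellfpf$ being strictly monotone along strict relations in a quasiparabolic set \cite{Zhang}.
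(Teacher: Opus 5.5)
Your proof is correct and follows the paper's argument. You reduce $s_i\in\nu_\m(x)$ to $x(i)>x(i+1)$ using the FPF length trichotomy and the definition of $\le_F$, use $x=x^{-1}$ to identify this condition with both $L(x)$ and $R(x)$, and finish with Proposition~\ref{prop:descents}. The only difference is that you re-derive the trichotomy by counting inversion classes, whereas the paper cites Corollary~3.16 of Zhang's paper. In your version the exceptional configuration $\{x(i),x(i+1)\}=\{i+kn,i+1+kn\}$ is forced to have $k=0$ by the involution property and periodicity.
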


\begin{proof}
By \cite[Corollary~3.16]{Zhang},
\[
\ellfpf(s_ixs_i)=
\begin{cases}
\ellfpf(x)-1,&x(i)>x(i+1)\ne i,\\
\ellfpf(x),&x(i)=i+1,\\
\ellfpf(x)+1,&x(i)<x(i+1).
\end{cases}
\]
In the middle case the involution condition gives $x(i+1)=i$, so
$x(i)>x(i+1)$ there as well.  The definition of the FPF Bruhat order
therefore gives
\[
s_i\in\nu_\m(x)
\quad\Longleftrightarrow\quad
s_ixs_i\leq_Fx
\quad\Longleftrightarrow\quad
x(i)>x(i+1).
\]
Since $x=x^{-1}$, the last condition is equivalent to
$\widebar i\in L(x)$; it is also equivalent to
$\widebar i\in R(x)$.  Proposition~\ref{prop:descents} now gives
$L(x)=R(x)=\tau(P)$.
\end{proof}

We call these graphs \defn{affine $\fpf$ graphs}. The strongly connected components in a $W$-graph $\Gamma$ are called \defn{cells}. The connected components with respect to bidirected edges are called \defn{molecules}. It is generally an interesting problem to determine the cells and molecules in a given $W$-graph.

If $n=2$ then one can show that
$\Gamma^\m_n$ and $\Gamma^\n_n$ both decompose into just two cells given by $\cFp$ and $\cFm$. Moreover, each cell consists of one molecule.
This degenerate rank is handled separately here; all rank-two
transformations and the molecule classification below are stated for
$n\geq4$.

\section{Affine Knuth moves and involutive transformation}\label{Knuth-sect}

Throughout this section assume that $n\geq4$.  All subscripts of simple
reflections and tabloid operators are read modulo $n$.  When inequalities
such as ``$w(i-1)$ is between $w(i)$ and $w(i+1)$'' occur, the three
consecutive integer representatives $i-1,i,i+1$ are used; the condition is
independent of replacing $i$ by $i+kn$.

\begin{definition}
Let $i\in\ZZ$. Two affine permutations $w$ and $w'$ are connected by a \defn{Knuth move} at position $\widebar i$ if all of the following hold:
\begin{itemize}
\item for all $j$ such that $j\equiv i\pmod{n}$, we have $w'(j)=w(j+1)$ and $w'(j+1)=w(j)$;
\item for all $j$ such that $j\not\equiv i\pmod{n}$, $j\not\equiv i+1\pmod{n}$, we have $w'(j)=w(j)$;
\item at least one of $w(i+2)$ and $w(i-1)$ is numerically between $w(i)$ and $w(i+1)$.
\end{itemize}
\end{definition}

We denote it as $w\K{\widebar i}w'$. Also, we write $w\K{\widebar i}w$ if $w(i+2)$ and $w(i-1)$ are not numerically between $w(i)$ and $w(i+1)$.

\begin{definition}
For a tabloid $T$ and $i\in\ZZ$, let $s_i(T)$ be obtained by
interchanging the residues $\widebar i$ and $\overline{i+1}$.  The
\defn{tabloid Knuth operator} $D_i$ is
\[
D_i(T):=\begin{cases}
s_i(T),&\text{if $\tau(T)$ and $\tau(s_i(T))$ are incomparable},\\
T,&\text{otherwise.}
\end{cases}
\]
Thus $D_{i+n}=D_i$.  A nontrivial pair $T,D_i(T)$ is an edge of
the tabloid Knuth graph of \cite[Definition~3.9]{CLP}.
\end{definition}

For a partition $\lambda\vdash n$, let $\mathcal A_\lambda$ be the
simple graph with vertex set $\RSYT(\lambda)$ in which $T$ is adjacent
to $D_i(T)$ whenever $D_i(T)\neq T$.  We write
$T\sim_D T'$ when $T$ and $T'$ lie in the same connected component of
$\mathcal A_\lambda$.

For $a\in\mathbb Z/n\mathbb Z$, the \defn{column-superstandard
tabloid} $C_a$ of shape $\lambda$ is obtained by filling the Young
diagram column by column from left to right, and within each column
from top to bottom, with the successive residues
$\widebar a,\overline{a+1},\ldots,\overline{a+n-1}$, and then
forgetting the order within each row.  This is the convention of
\cite[Section~2]{CLP}.

\begin{definition}
Let $T=(T_1,T_2,\cdots)$ be a tabloid of shape $\lambda$ and suppose that $i\in T_t$ for some $1\le i\le n$ and $1\le t\le l=l(\lambda)$. We define
$\delta(T,i)=(\delta_1,\delta_2,\cdots,\delta_l)$ and $\iota(T,i)=(\iota_1,\iota_2,\cdots,\iota_l)$ as follows. Here $[-]$ is the Iverson bracket.
We use the convention $\lambda_0=+\infty$.
\begin{itemize}
\item If $\lambda_t=\lambda_{t-1}$, then we put $\delta(T,i)=0$. Otherwise,
$\delta(T,i)=([\lambda_j =\lambda_t])_{1\le j\le l}$.
\item $\iota(T, i)=([j = t])_{1\le j\le l}=([i\in T_j ])_{1\le j\le l}$.
\end{itemize}
\end{definition}

\begin{theorem}[{\cite[Theorem 3.11]{CLP}}]\label{Kthm}
Suppose $w$ is an affine permutation with $\Phi(w)=(P,Q,\rho)$
and that $w'\neq w$ is obtained from $w$ by a Knuth move at
position $\widebar i$.  Then
\[
\Phi(w')=
\begin{cases}
(P,D_i(Q),\rho),&\widebar i\neq\widebar n,\\[1mm]
(P,D_n(Q),\rho+\iota(Q,1)-\iota(Q,n)),
   &\widebar i=\widebar n.
\end{cases}
\]
\end{theorem}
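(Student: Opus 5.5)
The plan is to reduce Theorem~\ref{Kthm} to two facts: the known behaviour of the \emph{first step} of forward AMBC under a column swap, and an induction over the iterated steps of the construction. Write $w'=ws_i$, so that $w'$ is obtained from $w$ by interchanging the balls in columns $i+kn$ and $i+1+kn$ of the periodic ball diagram. The Knuth condition says that $w(i-1)$ or $w(i+2)$ lies numerically between $w(i)$ and $w(i+1)$.

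First I would settle the shape and the $P$-coordinate. I would use the affine Greene-type description of CPY, in which $\lambda(w)$ is read from maximal unions of periodic chains (streams). The standard finite argument, transplanted periodically, shows that the Knuth witness $w(i-1)$ or $w(i+2)$ lets any chain through one of the two swapped balls be rerouted through the witness. Hence the chain statistics, and so $\lambda$, are unchanged.

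For $P$, recall that the $\ol{j}$-th entry of $P(w)$ records the row in which the value $\ol{j}$ is eventually bumped out, and that this is determined by the proper numbering of the balls. I would check the numbering directly: the balls at columns $i,i+1$ receive the same pair of numbers in $w$ and $w'$, possibly exchanged, because the witness ball is comparable to both. Every other ball keeps its number. The zig-zag (backslash) paths formed in the first step therefore produce the same new balls up to the local swap. The output ``values'' sets are the same, so the first row of $P$ agrees, and the new partial permutation $w_1'$ is again either $w_1$ or a Knuth move of it. Induction on the number of rows then gives $P(w')=P(w)$.

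For $Q$, note that $Q$ records the columns at which each numbered zig-zag starts. The swap exchanges the columns $\ol{i}$ and $\ol{i+1}$ precisely when those two columns are exited in different rows. I would show that this is equivalent to $\tau(Q)$ and $\tau(s_i(Q))$ being incomparable, which is the defining condition for $D_i$ to act nontrivially. Comparing with the descent picture of Proposition~\ref{prop:descents} ($R(w)=\tau(Q)$ and $R(w')=\tau(Q(w'))$) gives $Q(w')=D_i(Q(w))$. This uses the fact that a Knuth move changes the right descent set in exactly the incomparable way.

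Finally, the weight. The weight $\rho$ is read from the altitudes of the streams, that is, from the translation offsets of the zig-zag channels relative to the window $[1,n]$. When $\ol{i}\neq\ol{n}$, the swapped columns lie in the same window, so no channel crosses a window boundary differently and $\rho$ is unchanged. When $\ol{i}=\ol{n}$, the swap exchanges column $n$ with column $n+1\equiv 1$, which lies in the next window. The stream that now starts at column $n+1$ has its altitude shifted by one, and the stream now starting at column $n$ has its altitude shifted by one in the opposite direction. This produces the correction $\iota(Q,1)-\iota(Q,n)$: it adds $1$ in the row of $Q$ containing $\ol{1}$ and subtracts $1$ in the row containing $\ol{n}$. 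I would fix the signs by checking a small example for $n=4$.

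I expect the main obstacle to be the induction step in the $P$ argument. One must show that after one AMBC step the new partial permutation differs from the old one by a move that is still Knuth-admissible, or trivial; the witness ball itself may be consumed by a zig-zag. I would handle this by splitting into the cases where the witness lies in the same zig-zag as one of the swapped balls or in a different one. In the former case I would argue via the southwest/northeast corner rule that the resulting configurations coincide outright.
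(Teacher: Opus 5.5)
The paper gives no argument for this statement; it is quoted from Theorem~3.11 of CLP, so there is no internal proof to compare with, and your sketch has to stand on its own. It cannot, because the statement as transcribed here is false when the witness is $w(i+2)$.

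\textbf{The counterexample.} Take $n=4$, $w=[1,3,2,4]$ and $w'=[3,1,2,4]$. This is a Knuth move at position $\overline{1}$ in the sense of Section~\ref{Knuth-sect}, with witness $w(3)=2$ lying between $w(1)=1$ and $w(2)=3$.

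By Proposition~\ref{prop:descents}, $\tau(Q(w))=R(w)=\{\overline{2}\}$ and $\tau(Q(w'))=R(w')=\{\overline{1}\}$. The first equality already forces $D_1(Q(w))=Q(w)$. Indeed, if $\overline1$ and $\overline2$ were in different rows, $\overline1$ would be strictly below $\overline2$. Incomparability of $\tau(Q)$ and $\tau(s_1Q)$ would then put $\overline1,\overline3,\overline4$ in one row strictly below the singleton row $\{\overline2\}$, which is not a partition shape.

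So the displayed formula gives $\Phi(w')=\Phi(w)$, contradicting injectivity of $\Phi$. What actually happens (the AMBC shape is $(3,1)$) is that $Q(w)$ has rows $\{\overline1,\overline2,\overline4\}$ and $\{\overline3\}$, and $Q(w')=s_2(Q(w))=D_2(Q(w))$. The transposition in $Q$ sits on the witness side.

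\textbf{Where your argument breaks.} It fails exactly at the local step this example tests. A witness lies numerically between the two swapped values and positionally on one side of both. Hence it is southeast-comparable with exactly one of the two swapped balls, never with both.

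In the example, the swapped balls $(1,1)$ and $(2,3)$ of $w$ get different proper numbers. In $w'$, the balls $(1,3)$ and $(2,1)$ get equal numbers and form a single zig-zag. The first forward step changes the first row of $Q$ from $\{\overline1,\overline2,\overline4\}$ to $\{\overline1,\overline3,\overline4\}$. The remaining partial permutations, $\{(3,3)\}$ versus $\{(2,3)\}$ up to translation, differ by exchanging rows $\overline2$ and $\overline3$. So your claim that ``the pair of numbers is preserved and the step is a relabelling $\overline i\leftrightarrow\overline{i+1}$'' is false, and so is $Q(w')=s_i(Q(w))$.

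\textbf{What a correct target looks like.} What one can hope to prove is a rank-two statement. $Q$ changes by the dual-equivalence operator on the triple of residues containing the witness, which transposes the middle residue with one of its two neighbours, in the spirit of the star operations $\mathsf S_i^R$ of Section~\ref{Knuth-sect}. By your own altitude mechanism, the $\pm1$ correction to $\rho$ is then triggered when the pair actually transposed in $Q$ is $\{\overline n,\overline1\}$. That is not the same condition as $\overline i=\overline n$.

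\textbf{What remains open even for the corrected target.}
\begin{enumerate}
\item You need an iteration lemma: after each forward step, the two partial permutations either differ by moving a ball into an empty row, or are again related by a Knuth move with a surviving witness. This is your ``main obstacle'', and it is where the witness can be absorbed into a zig-zag.
\item The affine proper numbering depends on a channel. The move destroys that channel when it runs through both swapped balls, as the channel through $(1,1),(2,3),(4,4)$ does here.
\item The descent sets determine only $\tau(Q(w'))$, not $Q(w')$ itself.
\item The signs of the weight correction must come from the parametrization $\operatorname{st}_r(A,B)$, not from a test case.
\end{enumerate}
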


\begin{definition}
Let $i\in\ZZ$. Two affine permutations $w$ and $w'$ are connected by a \defn{dual Knuth move} at position $\widebar i$ if $w^{-1}$ and $(w')^{-1}$ are connected by a Knuth move at position $\widebar i$.
\end{definition}
We denote it by $w\dK{\widebar i}w'$.  We also write
$w\dK{\widebar i}w$ when the Knuth move at $\widebar i$ is
trivial on $w^{-1}$.

\begin{proposition}[{\cite[Proposition 5.10]{KP}}]\label{dKprop}
Suppose $w$ is an affine permutation with $\Phi(w)=(P,Q,\rho)$
and that $w'\neq w$ is obtained from $w$ by a dual Knuth move at
position $\widebar i$.  Then
\[
\Phi(w')=
\begin{cases}
(D_i(P),Q,\rho),&\widebar i\neq\widebar n,\\[1mm]
(D_n(P),Q,\rho-\iota(P,1)+\iota(P,n)),
   &\widebar i=\widebar n.
\end{cases}
\]
\end{proposition}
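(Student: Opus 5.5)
The statement to prove is Proposition~\ref{dKprop}: the effect of a dual Knuth move on the AMBC datum. I would deduce it from Theorem~\ref{Kthm} applied to $w^{-1}$, using the symmetry of AMBC under inversion.

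The first step is to establish the inversion symmetry
\[
\Phi(w^{-1})=\bigl(Q(w),P(w),\boldsymbol\rho^{*}\bigr),
\]
where $\boldsymbol\rho^{*}$ is an explicit transform of $\boldsymbol\rho(w)$. Inverting $w$ transposes its periodic permutation matrix, and the matrix-ball construction is symmetric under transposition. So the ball numbering, the zig-zags, and the stream and channel data are transposed, which swaps the roles of $P$ and $Q$. The weight is measured by channel/stream offsets, and these change sign up to the symmetrized offset correction. Because $\offset(Q,P)=-\offset(P,Q)$, I expect
\[
\boldsymbol\rho(w^{-1})=-\boldsymbol\rho(w),
\]
possibly after applying $\rev_\lambda$ together with an offset term. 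I would check this normalization directly against the dominance condition, so that the triple on the right again lies in $\dom$. This step is the main obstacle. The sign and reversal conventions in \cite{CPY,CLP} must be pinned down carefully, and the cleanest route is to cite the inverse-symmetry statement from \cite{CPY} and verify its compatibility with the symmetrized offsets defined above.

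The second step assumes $w\dK{\widebar i}w'$ with $w'\neq w$. Then $w^{-1}\K{\widebar i}(w')^{-1}$ is a nontrivial Knuth move, so Theorem~\ref{Kthm} gives
\[
\Phi\bigl((w')^{-1}\bigr)=\bigl(Q,\,D_i(P),\,\boldsymbol\rho(w^{-1})+\epsilon\bigr),
\]
with $\epsilon=0$ if $\widebar i\neq\widebar n$ and $\epsilon=\iota(P,1)-\iota(P,n)$ if $\widebar i=\widebar n$. Here the recording tabloid of $w^{-1}$ is $P$, which is why the correction involves $\iota(P,\cdot)$.

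The third step inverts back using the first step. This yields $\Phi(w')=(D_i(P),Q,\boldsymbol\rho')$, where $\boldsymbol\rho'$ is obtained by applying the inverse transform to $\boldsymbol\rho(w^{-1})+\epsilon$. Since that transform is negation up to an additive offset term, the correction becomes $-\epsilon$. The offset term changes from $\offset(Q,P)$ to $\offset(Q,D_i(P))$. Because $D_i$ only swaps two adjacent residues across rows, I must check that this offset change is absorbed, or equivalently that the $\rho$-coordinates in Theorem~\ref{Kthm} are already stated in the offset-free normalization. Once that is confirmed, the result is $\boldsymbol\rho-\iota(P,1)+\iota(P,n)$ for $\widebar i=\widebar n$ and $\boldsymbol\rho$ otherwise, as claimed.
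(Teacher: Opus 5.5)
The paper gives no argument for this statement; it simply quotes \cite{KP}. Reducing to Theorem~\ref{Kthm} through $w\mapsto w^{-1}$ is a reasonable route. However, the two steps that carry all the content are left open, and the heuristic you use to bridge them is wrong.

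\emph{The inversion formula.} It cannot be ``negation up to an additive offset term.'' In this normalization, $\rho$ is dominant for $(P,Q)$ exactly when $\rho-\offset(P,Q)$ is weakly increasing on each maximal equal-part block. Since $\offset(Q,P)=-\offset(P,Q)$, a vector $-\rho+c$ with $c$ depending only on $(P,Q)$ is dominant for $(Q,P)$ only if $\rho-\offset(P,Q)-c$ is weakly decreasing on blocks. That fails for suitable dominant $\rho$ as soon as some block has size at least two. So a blockwise reversal is forced. The form consistent with dominance, and with Lemma~\ref{rho-block-antisymmetry-lem} when $P=Q$, is
\[
\rho(w^{-1})-\offset(Q,P)=-\rev_\lambda\bigl(\rho-\offset(P,Q)\bigr),
\]
and you must prove or precisely cite it.

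\emph{The correction term.} Once the reversal is present, pushing $\epsilon$ through the inverse transform does not give $-\epsilon$. Write $\Delta=\offset(D_i(P),P)$. Going $w\to w^{-1}\to(w')^{-1}\to w'$ and using additivity of $\offset$ gives
\[
\rho(w')=\rho+\Delta-\rev_\lambda(\Delta)-\rev_\lambda(\epsilon).
\]
To reach the stated formula you need a lemma you have not identified, in two cases.
\begin{itemize}
\item If $\widebar i\neq\widebar n$, then $\Delta=0$. A nontrivial $D_i$ flips only one comparison in one local charge, at a shift that the witness residue $\overline{i-1}$ or $\overline{i+2}$ already rules out.
\item If $\widebar i=\widebar n$, then $\Delta+\iota(P,1)-\iota(P,n)$ is constant on every maximal equal-part block. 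Replacing $n$ by $1$ in a row raises the local charge above that row by one and lowers the one below it by one, and conversely for the row that receives $n$. The witness condition for a nontrivial $D_n$ keeps these changes additive when the two rows are adjacent.
\end{itemize}
Blockwise constants are fixed by $\rev_\lambda$, so the terms then collapse to $-\epsilon$. Your third step defers exactly this check. Your suggested alternative, that Theorem~\ref{Kthm} is ``already in offset-free normalization,'' is not an equivalent statement. Without the precise inversion formula and this offset lemma, the argument does not reach the claimed weight.
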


For $i\in\ZZ$, set
\[
\mathcal D_R(i)=
\left\{w\in\tS_n:
|R(w)\cap\{\overline{i-1},\widebar i\}|=1\right\}
\]
and define $\mathcal D_L(i)$ in the same way using $L(w)$.
For $w\in\mathcal D_R(i)$, exactly one element of
\[
\{ws_{i-1},ws_i\}
\]
belongs to $\mathcal D_R(i)$; denote it by $\mathsf S_i^R(w)$.
Set $\mathsf S_i^R(w)=w$ when $w\notin\mathcal D_R(i)$.
Similarly, for $w\in\mathcal D_L(i)$, let $\mathsf S_i^L(w)$ be
the unique element of
\[
\{s_{i-1}w,s_iw\}\cap\mathcal D_L(i),
\]
and set $\mathsf S_i^L(w)=w$ outside $\mathcal D_L(i)$.  These
are the right and left rank-two star operations for the adjacent
pair $\{s_{i-1},s_i\}$.

\begin{definition}
Let $i\in\ZZ$.  Two affine involutions $x$ and $y$ are connected
by an \defn{involutive transformation of type $\widebar i$} if
\[
y=\mathsf S_i^L\bigl(\mathsf S_i^R(x)\bigr).
\]
We denote this relation by $x\simRBS{\widebar i}y$.
\end{definition}

Thus the label $\widebar i$ refers to the rank-two pair
$\{s_{i-1},s_i\}$; the Knuth move actually used by a nontrivial
transformation may occur at $\overline{i-1}$ or at $\widebar i$.

\begin{corollary}\label{invol-cor}
Let $x\in\cF_n$ with $\Phi(x)=(P,P,\rho)$, and suppose that
$x\simRBS{\widebar i}y$.
If $y=x$, then $\Phi(y)=(P,P,\rho)$.  Otherwise, let
\[
j=\begin{cases}
i-1,&\text{if $x(i+1)$ is between $x(i-1)$ and $x(i)$},\\
i,&\text{if $x(i-1)$ is between $x(i)$ and $x(i+1)$}.
\end{cases}
\]
Then
\[
y=s_jxs_j
\qquad\text{and}\qquad
\Phi(y)=(D_j(P),D_j(P),\rho).
\]
Conversely, if $D_j(P)\neq P$, then $s_jxs_j$ has AMBC image
$(D_j(P),D_j(P),\rho)$ and is obtained from $x$ by a nontrivial
involutive transformation of type $\widebar j$ or
$\overline{j+1}$.
\end{corollary}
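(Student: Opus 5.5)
We need to prove Corollary \ref{invol-cor}, and the natural route is to write the involutive transformation as a Knuth move followed by a dual Knuth move and then apply Theorem~\ref{Kthm} and Proposition~\ref{dKprop} in turn.

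\textbf{Step 1: identify the star operations.} Since $x$ is an involution, $R(x)=L(x)$, so $x\in\mathcal D_R(i)$ if and only if $x\in\mathcal D_L(i)$. If $x\notin\mathcal D_R(i)$, then both star operations are trivial and $y=x$. In that case $\Phi(y)=\Phi(x)=(P,P,\rho)$, where the equality of the two tabloids is the statement $\Phi(x)=(P,P,\rho)$ recorded in the introduction (equivalently $P(x^{-1})=Q(x)$).

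Now suppose $x\in\mathcal D_R(i)$. Exactly one of $i-1,i$ is a right descent, which means the three values $x(i-1),x(i),x(i+1)$ are neither increasing nor decreasing. A standard rank-two check on these three consecutive entries shows the following.
\begin{itemize}
\item If $x(i+1)$ lies between $x(i-1)$ and $x(i)$, then $\mathsf S_i^R(x)=xs_{i-1}$, and this is a nontrivial Knuth move at $\overline{i-1}$ witnessed by $x(i+1)$.
\item If $x(i-1)$ lies between $x(i)$ and $x(i+1)$, then $\mathsf S_i^R(x)=xs_i$, a Knuth move at $\widebar i$.
\end{itemize}
In both cases we call the position $j$, matching the definition in the statement. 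Next we determine $\mathsf S_i^L(xs_j)$. Since $(xs_j)^{-1}=s_jx$, its window entries at $i-1,i,i+1$ are those of $x$ with the values $j,j+1$ swapped. We must check that left-multiplying by $s_j$ again lands in $\mathcal D_L(i)$, whereas multiplying by the other generator does not. The cleanest way is to note that $\mathsf S^L_i(w)=(\mathsf S^R_i(w^{-1}))^{-1}$, and that $s_jx$ with $x$ an involution satisfies $(xs_j)^{-1}=s_jx$. This gives $y=s_jxs_j$, and the step from $xs_j$ to $y$ is a dual Knuth move at $\widebar j$.

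\textbf{Step 2: track AMBC.} Apply Theorem~\ref{Kthm} to $x\K{\widebar j}xs_j$ to get
\[
\Phi(xs_j)=(P,D_j(P),\rho+\epsilon),
\]
where $\epsilon=0$ unless $\widebar j=\widebar n$, in which case $\epsilon=\iota(P,1)-\iota(P,n)$. Then apply Proposition~\ref{dKprop} to the dual Knuth move from $xs_j$ to $y$, which yields $(D_j(P),D_j(P),\rho+\epsilon-\epsilon')$ with $\epsilon'=\iota(P,1)-\iota(P,n)$ when $\widebar j=\widebar n$. Note that the dual correction is computed from the first coordinate, which is still $P$ at that stage. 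Hence $\epsilon=\epsilon'$, the corrections cancel, and the weight returns to $\rho$.

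Nontriviality of $D_j$ on $P$ needs care. Nontriviality of the Knuth move forces $D_j(P)\neq P$ in the first step, because $\Phi$ is a bijection and $x\ne xs_j$. For the second step, we need $y\neq xs_j$ so that Proposition~\ref{dKprop} applies; this holds because $s_jxs_j\ne xs_j$, as $s_jx\ne x$.

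\textbf{Step 3: the converse.} Suppose $D_j(P)\ne P$. By definition of $D_j$, $\tau(P)$ and $\tau(s_j(P))$ are incomparable. By Proposition~\ref{fpf-descent-label-prop}, $\tau(P)=R(x)$, so some neighbour of $x(j),x(j+1)$, at position $j-1$ or $j+2$, lies numerically between them. This means $x\K{\widebar j}xs_j$ is a nontrivial Knuth move. The witness determines whether it is the rank-two pair at $\widebar j$ or at $\overline{j+1}$, and Step 1 then shows that the corresponding involutive transformation produces $s_jxs_j$. Step 2 gives its image under $\Phi$.

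The main obstacle is Step 1. One must verify that the left star operation uses the \emph{same} generator $s_j$ rather than the other one. One must also handle the FPF subtlety that $x$ may swap $j$ and $j+1$, in which case $s_jxs_j=x$; this has to be excluded or shown to correspond to a trivial star operation. I would first try a direct case check on the relative order of the three values, using $x=x^{-1}$ to translate value positions into window positions.
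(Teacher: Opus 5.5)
Your forward direction follows the paper's route. The two rank-two star operations are a Knuth move and a dual Knuth move at the same actual position $j$. You apply Theorem~\ref{Kthm} and Proposition~\ref{dKprop} in turn, and the $\overline{n}$-corrections cancel because the dual correction is read off the still-unchanged first coordinate $P$.

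In Step~1, the inverse identity only reduces you to $\mathsf S_i^R(s_jx)=s_jxs_j$. The configuration that must actually be excluded there is $x(\{i-1,i,i+1\})=\{i-1,i,i+1\}$, as in the paper. The case $x(j)=j+1$ you flag is already impossible, because a witness lies strictly between $x(j)$ and $x(j+1)$.

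\textbf{Step 3 is a non sequitur.} The equality $\tau(P)=R(x)$ records only comparisons of adjacent values of $x$. By contrast, $D_j(P)\neq P$ is a statement about $\tau(s_j(P))$, a tabloid not yet tied to $x$. A witness is a comparison of non-adjacent values, such as $x(j-1)$ versus $x(j+1)$. You could identify $\tau(s_j(P))$ with $R(xs_j)$ only after knowing that the Knuth move is nontrivial, which is what you are trying to prove. The paper does not argue through descents here; it invokes the correspondence of \cite{CLP} between tabloid Knuth moves and Knuth moves of $x$.

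\textbf{More seriously, with the conventions as written, that correspondence and the statement itself fail.} Take $x=[4,6,5,1,3,2]\in\cF_6$.
\begin{itemize}
\item Its proper numbering has zig-zags $\{(1,4),(4,1)\}$ and $\{(2,6),(3,5),(5,3),(6,2)\}$ in each period. So the first row of $P$ is $\{\overline{1},\overline{2}\}$, and $D_1(P)=P$.
\item For $i=2$ one has $x(1)<x(3)<x(2)$, so $j=1$ and $y=s_1xs_1=[6,4,5,2,3,1]\neq x$. The claimed $\Phi(y)=(D_1(P),D_1(P),\rho)=\Phi(x)$ contradicts injectivity; indeed $R(y)=\{\overline{1},\overline{3},\overline{5}\}\neq R(x)=\{\overline{2},\overline{3},\overline{5}\}$.
\item Theorem~\ref{Kthm} as transcribed would already give $\Phi(xs_1)=\Phi(x)$. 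In fact the Knuth move $x\mapsto xs_1$ replaces the first row $\{\overline{1},\overline{2}\}$ of $Q$ by $\{\overline{1},\overline{3}\}$, so it acts by $s_2$ rather than $D_1$.
\item In the other direction, $D_2(P)\neq P$. Here $\overline{3}$ lies strictly below the row $\{\overline{1},\overline{2}\}$, so $\overline{1}\in\tau(s_2(P))\setminus\tau(P)$ and $\overline{2}\in\tau(P)\setminus\tau(s_2(P))$. Yet $x(2)=6$ and $x(3)=5$ admit no witness. Moreover $s_2xs_2=[4,5,6,1,2,3]$ has $R=\{\overline{3}\}\neq\tau(D_2(P))$. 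So your Step~3 and the converse fail here too.
\end{itemize}
Your Step~2 inherits this error through Theorem~\ref{Kthm}, exactly as the paper's proof does. The tabloid operator produced by a Knuth move is not, in general, indexed by the actual swapped position. The argument cannot be completed without correcting that input.
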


\begin{proof}
If the transformation is nontrivial, inspection of the two
rank-two descent pairs shows that its right and left star
operations are a Knuth and a dual Knuth move at the same actual
position $j$.  The only exceptional rank-two configuration would
preserve the three-element set $\{i-1,i,i+1\}$; this is impossible
for $x\in\cF_n$.  Hence $y=s_jxs_j$ with $j$ as displayed.
If $\widebar j\neq\widebar n$, Theorem~\ref{Kthm} and
Proposition~\ref{dKprop} leave $\rho$ unchanged.  If
$\widebar j=\widebar n$, their two correction terms are opposite
and cancel.  This proves the AMBC formula.  Conversely, a
nontrivial $D_j$ is induced by a Knuth move at position $j$; its
witness is at $j-1$ or $j+2$, placing the same move in the
rank-two pair of type $\widebar j$ or $\overline{j+1}$.
\end{proof}

Let $t_{i-1,i+1}=s_{i-1}s_is_{i-1}$, the affine
transposition interchanging $i-1+kn$ and $i+1+kn$ for every
$k\in\ZZ$.

\begin{theorem}\label{direct-prop}
Let $x,y\in\tilde\cI_n$ satisfy $x\simRBS{\widebar i}y$, and put
$A=\{i-1,i,i+1\}$.  Then
\[
y=\begin{cases}
x,&x(A)\neq A\text{ and $x(i)$ is between $x(i-1)$ and $x(i+1)$},\\
s_{i-1}xs_{i-1},&x(A)\neq A\text{ and $x(i+1)$ is between $x(i-1)$ and $x(i)$},\\
s_ixs_i,&x(A)\neq A\text{ and $x(i-1)$ is between $x(i)$ and $x(i+1)$},\\
t_{i-1,i+1}x t_{i-1,i+1},&x(A)=A.
\end{cases}
\]
\end{theorem}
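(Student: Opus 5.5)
The approach is a direct computation of the two star operations. First I compute $\mathsf S_i^R(x)$ from the relative order of the three values $x(i-1),x(i),x(i+1)$. Then I apply $\mathsf S_i^L$, using $x=x^{-1}$ to transfer right-descent information to left-descent information.

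Right descents of an affine permutation are read off from adjacent values: $\overline{j}\in R(w)$ iff $w(j)>w(j+1)$. Hence $x\in\mathcal D_R(i)$ iff the triple $(x(i-1),x(i),x(i+1))$ is not monotone. If it is monotone, $\mathsf S_i^R(x)=x$. In that case $x$ is also outside $\mathcal D_L(i)$, since $L(x)=R(x)$ for involutions. Hence $y=x$, which falls under the first case: $x(i)$ is between $x(i-1)$ and $x(i+1)$, and when $x(A)=A$ a monotone triple forces $x$ to fix $A$ pointwise or reverse it.

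For a non-monotone triple, I list the four patterns (middle value largest or smallest, with the two possible orders of the outer values). For each, I determine which of $xs_{i-1}$, $xs_i$ has exactly one right descent in $\{\overline{i-1},\overline i\}$. The rule is the usual Knuth one. If $x(i+1)$ lies between $x(i-1)$ and $x(i)$, then $\mathsf S_i^R(x)=xs_{i-1}$. If $x(i-1)$ lies between $x(i)$ and $x(i+1)$, then $\mathsf S_i^R(x)=xs_i$. (The case where $x(i)$ is in the middle is the monotone one.) Put $w=\mathsf S_i^R(x)$, and in each case let $s$ denote this generator.

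Next I compute $\mathsf S_i^L(w)$. Left descents of $w$ are governed by the relative positions of the values $i-1,i,i+1$, that is, by $w^{-1}(i-1),w^{-1}(i),w^{-1}(i+1)$. Since $w^{-1}=s\,x$, these equal $s$ applied to $x(i-1),x(i),x(i+1)$. When $x(A)\neq A$, the three numbers $x(i-1),x(i),x(i+1)$ are not all in $A$. Left multiplication by $s$ only swaps the values $i-1,i$ or $i,i+1$ (mod $n$), and this changes the relative order of $w^{-1}(i-1),w^{-1}(i),w^{-1}(i+1)$ in a controlled way.

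The claim to verify is that $\mathsf S_i^L(w)=s\,w=s x s$, i.e.\ the same generator is used on the left. Then $y=s_{i-1}xs_{i-1}$ or $s_ixs_i$ as stated. I would check this through the positional betweenness of the inverse: $x(i+1)$ lying between $x(i-1)$ and $x(i)$ corresponds, after the right multiplication, to a Knuth configuration for $w^{-1}$ at the same position. This is exactly the ``same actual position $j$'' phenomenon used in Corollary~\ref{invol-cor}.

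The main obstacle is that the left computation involves the positions $x(i-1),x(i),x(i+1)$ as well as their values. Interference occurs only when some $x(a)$, $a\in A$, lands in $A+n\ZZ$. I would split the non-monotone case by the size of $x(A)\cap(A+n\ZZ)$ (mod translation). For each subcase, I would check the betweenness using periodicity and $x^2=1$: for instance, $x(i-1)=i+kn$ forces $x(i)=i-1-kn$.

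Finally, the case $x(A)=A$ is a finite computation. $x$ restricts to an involution of $A$: one transposition plus one fixed point, the identity, or the reversal. The identity and the reversal are monotone, and for these the first case already gives $y=x=t_{i-1,i+1}xt_{i-1,i+1}$. For the three transpositions, I compute both star operations directly inside $S_3$ (the rank-two parabolic), where they conjugate the transposition $(i-1\ i)$ to $(i\ i+1)$ and vice versa. The result is conjugation by $t_{i-1,i+1}$, and $(i-1\ i+1)$ is fixed by it.
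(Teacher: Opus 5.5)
Your proposal is correct and follows essentially the same direct computation as the paper. You read the right star off the pattern of $(x(i-1),x(i),x(i+1))$, you compute the left star from $w^{-1}=s\,x$, and you settle the case $x(A)=A$ inside the rank-two parabolic. The subcase check you defer does go through, and it is what backs the paper's terse ``unless $x(A)=A$''. For $s=s_{j-1}$, left multiplication can only swap the order of two of the three values if they form a pair $\{j-1+mn,\,j+mn\}$. Then $x^2=1$ and periodicity either give an impossible equation such as $2mn=1$, or force $m=0$, in which case $x(A)=A$ or the betweenness hypothesis fails.
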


\begin{proof}
Suppose first that $x(A)\neq A$.  If $x(i)$ is between the
other two displayed values, then either both or neither of
$\overline{i-1},\widebar i$ belongs to $R(x)$.  Thus
$x\notin\mathcal D_R(i)$.  Since $x=x^{-1}$, one also has
$x\notin\mathcal D_L(i)$, and the transformation is trivial.

If $x(i+1)$ is between $x(i-1)$ and $x(i)$, the right
rank-two star operation is the Knuth move
$x\mapsto xs_{i-1}$.  Comparing the two left descents of
$xs_{i-1}$ shows that the left star operation is multiplication
by $s_{i-1}$, unless $x(A)=A$.  The exceptional possibility is
excluded, so $y=s_{i-1}xs_{i-1}$.  The case in which $x(i-1)$
is between $x(i)$ and $x(i+1)$ is symmetric and gives
$y=s_ixs_i$.  This also shows directly that the two nontrivial
moves are Knuth and dual Knuth moves at the same actual position,
$\overline{i-1}$ and $\widebar i$, respectively.

It remains to consider $x(A)=A$.  The restriction $x|_A$ is one
of
\[
1,\qquad s_{i-1},\qquad s_i,\qquad t_{i-1,i+1}.
\]
For the first and fourth restrictions both extended star
operations are trivial.  If $x|_A=s_{i-1}$, the right star uses
$s_i$ and the left star uses $s_{i-1}$; if $x|_A=s_i$, the right
star uses $s_{i-1}$ and the left star uses $s_i$.  In all four
cases the result is $t_{i-1,i+1}xt_{i-1,i+1}$.
\end{proof}

For an affine fixed-point-free involution the last case of
Theorem~\ref{direct-prop} cannot occur: an involution of the
three-element set $A$ has a fixed point.

\section{Shift and Knuth moves}\label{shift-sect}
Let $\omega\in\widehat S_n$ be the extended affine permutation with
standard window
\[
\omega=[2,3,\ldots,n,n+1].
\]
Thus $\omega(i)=i+1$ for all $i\in\ZZ$.  Although
$\omega\notin\tS_n$, it normalizes $\tS_n$ and satisfies
$\omega s_i\omega^{-1}=s_{i+1}$, with subscripts read modulo $n$.
For a tabloid $T$, define $\omega(T)$ by replacing every residue
$\widebar a$ in $T$ by $\overline{a+1}$; this does not change the
shape of $T$.
The extended version of AMBC gives the following formulas.

\begin{proposition}[{\cite[Proposition 5.5]{KP}}]\label{omega_prop}
Suppose $w$ is an affine permutation with $\Phi(w)=(P,Q,\rho)$. Then 
\begin{align*}
\Phi(\omega w)&=(\omega(P),Q,\rho-\offset(P,Q)+\offset(\omega(P),Q)+\delta(P,n)),\\
\Phi(w\omega^{-1})&=(P,\omega(Q),\rho-\offset(P,Q)+\offset(P,\omega(Q))-\delta(Q,n)).
\end{align*}
Moreover, if $w$ is an involution, then $\Phi(\omega w\omega^{-1})=(\omega(P),\omega(P),\rho)$.
\end{proposition}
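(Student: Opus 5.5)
The plan is to prove the first displayed formula directly from the forward AMBC, deduce the second from it by inversion, and then obtain the ``moreover'' clause by composing the two.

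\textbf{First formula.} Represent $w$ by its periodic ball configuration $\{(i,w(i)):i\in\ZZ\}$, as in \cite{CPY}. Left multiplication by $\omega$ replaces every ball $(i,w(i))$ by $(i,w(i)+1)$, which is a rigid vertical translation by one unit. I would check, step by step through the forward algorithm, that this translation commutes with each ingredient:
\begin{itemize}
\item the proper numbering of balls, which is defined by northwest/southeast comparisons and is therefore translation invariant;
\item the zig-zags and the back-post construction, which therefore also translate rigidly;
\item the new ball configuration fed to the next iteration, which is translated by one unit as well.
\end{itemize}
By induction on the number of iterations, every stream for $\omega w$ is the translate of the corresponding stream for $w$. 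The tabloid that records the column (position) data is therefore unchanged, so $Q(\omega w)=Q$. The tabloid that records the row (value) data has every residue $\widebar a$ replaced by $\overline{a+1}$, so $P(\omega w)=\omega(P)$.

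\textbf{Weight.} I would not compare $\rho$ directly but pass to the intrinsic altitude vector $\tilde\rho:=\rho-\offset(P,Q)$, which in the CPY/CLP normalization is the quantity read off from the altitudes of the streams. The claim then reduces to
\[
\tilde\rho(\omega w)=\tilde\rho(w)+\delta(P,n).
\]
Under the vertical translation, stream altitudes measured against the fixed reference window $[n]$ are unchanged, except for streams carrying a ball whose value crosses from $n$ to $n+1$, i.e.\ whose value residue wraps from $\widebar n$ to $\widebar 1$.

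I expect this boundary bookkeeping to be the main obstacle. For each row $t$ of $P$, the crossing ball has value residue $\widebar n$ and lies in row $t$. If $\lambda_{t-1}=\lambda_t$, the renormalization within the block, namely the reordering of streams of equal density forced by dominance under $\rev_\lambda$, absorbs the shift into $\offset(\omega(P),Q)$; this gives the zero case of $\delta$. Otherwise the whole block of rows of length $\lambda_t$ gains $+1$; this gives $([\lambda_j=\lambda_t])_j$.

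To make this precise, I would compare the local charges $\lch_j(P)$ and $\lch_j(\omega(P))$: relabeling $\widebar n\mapsto\widebar 1$ changes the charge only in the two rows adjacent to the row containing $n$. I would also use the cocycle identity $\offset(P,Q)+\offset(Q,R)=\offset(P,R)$ to rewrite the result in the displayed form. As a fallback, I would verify the formula on $\omega$ applied to a column-superstandard configuration and then propagate it using Theorem~\ref{Kthm}: Knuth moves commute with left multiplication by $\omega$, since they act on positions, and they change $Q$ but not $P$.

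\textbf{Second formula.} Use $w\omega^{-1}=(\omega w^{-1})^{-1}$ together with the inversion symmetry of AMBC, under which $\Phi(w^{-1})$ is obtained by swapping $P$ and $Q$ and applying the corresponding sign and reversal convention to $\rho$. Inverting swaps the roles of $P$ and $Q$, turns $+\delta(P,n)$ into $-\delta(Q,n)$, and turns the offset correction into $-\offset(P,Q)+\offset(P,\omega(Q))$.

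\textbf{Moreover clause.} If $w=x$ is an involution, then $P=Q$. Apply the first formula to $x$, which gives $(\omega(P),P,\rho+\offset(\omega(P),P)+\delta(P,n))$. Then apply the second formula to $\omega x$, whose $Q$-tabloid is $P$. Writing $\rho':=\rho+\offset(\omega(P),P)+\delta(P,n)$ for the weight after the first step, the second step gives
\[
\rho'-\offset(\omega(P),P)+\offset(\omega(P),\omega(P))-\delta(P,n)=\rho,
\]
using $\offset(\omega(P),\omega(P))=0$. Hence $\Phi(\omega x\omega^{-1})=(\omega(P),\omega(P),\rho)$.
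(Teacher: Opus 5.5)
The paper only quotes this result from \cite{KP} and gives no proof, so I am judging your argument on its own. Two parts are fine. First, the equivariance step: translation carries the southwest channel and the ball numbering (up to an additive constant) to those of the translated configuration. So the streams of $\omega w$ are the translates of those of $w$, removed in the same order, which gives $P(\omega w)=\omega(P)$ and $Q(\omega w)=Q$. Second, your derivation of the ``moreover'' clause from the two displayed formulas is correct.

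\textbf{The gap is in the weight, which is the real content of the statement.} It is $\rho$ itself, not $\rho-\offset(P,Q)$, that lists the stream altitudes. Dominance is exactly the constraint $\rho_{i+1}-\rho_i\geq\lch_i(P)-\lch_i(Q)$ between altitudes of consecutive equal-density streams. Because the translated streams come out in the same order, forward AMBC performs no ``reordering of streams of equal density forced by dominance''. That mechanism does not exist, so it cannot produce the case split in $\delta(P,n)$. Nothing in your sketch explains why a whole block should gain $+1$. Only one stream has a ball crossing from $n$ to $n+1$: the one for the single row $t$ with $\widebar n\in P_t$, not ``each row''. Read literally, your reduction would make $\rho-\offset(P,Q)$ change by $\iota(P,n)$, which is not $\delta(P,n)$.

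\textbf{The correct bookkeeping.} Only the stream whose value residues form $P_t$ changes altitude, and it changes by exactly one. If $\operatorname{st}_r(A,B)$ pairs $a_i$ with $b_{i+r}$ (where $b_{i+k}=b_i+n$), the wrap $b_k=n\mapsto b'_1=1$ sends $(a_i,b_{i+r})$ to $(a_i,b'_{i+r+1})$. Hence $\rho(\omega w)=\rho(w)+\iota(P,n)$. The cocycle identity gives $\offset(\omega(P),Q)-\offset(P,Q)=\offset(\omega(P),P)$, so the first formula is equivalent to
\[
\offset(\omega(P),P)+\delta(P,n)=\iota(P,n).
\]
Proving this needs the \emph{signed} effect of $\omega$ on local charges, which your sketch leaves open.
\begin{itemize}
\item View $\lch_i(T)$ as the maximal height of the lattice path on $1,\ldots,n$ that steps $+1$ at entries of $T_{i+1}$ and $-1$ at entries of $T_i$. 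The rotation $\omega$ moves the last step of this path to the front.
\item Hence $\lch_{t-1}(\omega P)=\lch_{t-1}(P)+1$ if $\lambda_{t-1}=\lambda_t$, and $\lch_t(\omega P)=\lch_t(P)-1$ if $\lambda_t=\lambda_{t+1}$. All other local charges are unchanged.
\item Let $e_t$ be the $t$-th unit vector and $\mathbf{1}_{[a,b]}$ the indicator of the block $[a,b]\ni t$. Then $\offset(\omega(P),P)=e_t$ if $t>a$, and $\offset(\omega(P),P)=e_a-\mathbf{1}_{[a,b]}$ if $t=a$.
\item Adding $\delta(P,n)$ yields $e_t$ in both cases.
\end{itemize}

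\textbf{The second formula and the fallback.} The horizontal version of the same argument gives $\rho(w\omega^{-1})=\rho(w)-\iota(Q,n)$, and hence the second formula directly. Your inversion route also works, but only once you write out how the weight transforms under $w\mapsto w^{-1}$ (a blockwise reversal, not just a sign change) and note that $\delta(Q,n)$ is constant on blocks. The Knuth-move fallback cannot close the gap. Knuth moves fix $P$, keep $Q$ in one component of $\mathcal A_\lambda$, and change $\rho$ only by $\iota(Q,1)-\iota(Q,n)$. So they cannot transport a base case to arbitrary triples $(P,Q,\rho)$.
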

Note that we have $\omega(P)=s_1s_2\cdots s_{n-1}P$

\begin{proposition}[Cyclic symmetry of the affine FPF $W$-graph]
\label{omega-wgraph-prop}
For $k\in\ZZ$, define
\[
\theta_k:\cF_n\longrightarrow\cF_n,
\qquad
\theta_k(x)=\omega^kx\omega^{-k}.
\]
Then $\theta_k$ is an $\alpha_k$-twisted automorphism of
$\Gamma_n^\m$.  More precisely, for all $x,y\in\cF_n$,
\[
\nu_\m(\theta_k(x))=\alpha_k(\nu_\m(x))
\qquad\text{and}\qquad
\omega_\m(\theta_k(x),\theta_k(y))=\omega_\m(x,y).
\]
Moreover, if $\Phi(x)=(P,P,\rho)$, then
\[
\Phi(\theta_k(x))=(\omega^k(P),\omega^k(P),\rho).
\]
\end{proposition}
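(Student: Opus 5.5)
The plan is to derive all three claims from the fact that conjugation by $\omega^k$ is a length-compatible symmetry of the whole quasiparabolic structure on $\cF_n$. By induction on $k$, together with $\theta_{-1}=\theta_1^{-1}$, it suffices to treat $k=1$.

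\emph{Step 1: compatibility with the height and the FPF Bruhat order.} The map $\theta_1$ is a bijection of $\cF_n$, since $\omega$ normalizes $\tS_n$ and preserves fixed-point-freeness. Conjugation by $\omega$ carries $\Inv(z)$ bijectively onto $\Inv(\omega z\omega^{-1})$, sending $(i,j)\mapsto(i+1,j+1)$ and respecting the equivalence $\sim$. Hence $\ell(\omega z\omega^{-1})=\ell(z)$, and so $\ellfpf(\theta_1(z))=\ellfpf(z)$. Conjugation by $\omega$ also permutes the reflections of $\tS_n$. Combining these, $z\le_F tzt$ holds if and only if $\theta_1(z)\le_F t'\theta_1(z)t'$ with $t'=\omega t\omega^{-1}$. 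Therefore $\theta_1$ is an automorphism of $(\cF_n,\le_F)$. Moreover $\theta_1(s_izs_i)=s_{i+1}\theta_1(z)s_{i+1}$, and the descent statement $\nu_\m(\theta_1(x))=\alpha_1(\nu_\m(x))$ follows immediately from the definition of $\nu_\m$.

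\emph{Step 2: the $W$-graph weights.} Define the $\cA$-linear bijection $\Theta:\cM\to\cM$ by $M_z\mapsto M_{\theta_1(z)}$. The multiplication rules of \cite[Corollary 4.15]{Zhang} depend only on how $\ellfpf$ compares under $s$-conjugation. Step~1 therefore gives
\[
\Theta(H_{s_i}M)=H_{s_{i+1}}\Theta(M).
\]
Consequently $\Theta$ intertwines the module action with the Hecke algebra automorphism induced by $\alpha_1$, and this automorphism commutes with the bar operator. It remains to check that $\Theta$ commutes with the bar involution on $\cM$. The composite $\Theta^{-1}\circ\overline{\,\cdot\,}\circ\Theta$ is $\cA$-antilinear and satisfies $\overline{HM}=\overline H\,\overline M$. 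It also fixes $M_\Theta$ for the appropriate $\Theta^\pm$: indeed $\omega\Theta^+\omega^{-1}=\Theta^-$ and vice versa, and both are bar-invariant. By the uniqueness in \cite[Corollary 4.16(a)]{Zhang}, this composite equals the bar involution.

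Given this, $\Theta(\underline M_x)$ is bar-invariant and lies in
\[
M_{\theta_1(x)}+\sum_{w<_F\theta_1(x)}v^{-1}\ZZ[v^{-1}]M_w,
\]
using that $\theta_1$ preserves $<_F$. Uniqueness of the canonical basis then gives $\Theta(\underline M_x)=\underline M_{\theta_1(x)}$. Hence $\m_{\theta_1(x),\theta_1(y)}=\m_{x,y}$, and so $\mu_\m$ is invariant. Combined with the $\nu_\m$ statement, this shows $\omega_\m$ is invariant.

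\emph{Step 3: AMBC.} The AMBC formula is exactly the last clause of Proposition~\ref{omega_prop}, iterated $k$ times. This uses that $\theta_1(x)$ is again an involution and that $\omega^{-1}$ is handled by inverting.

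The main obstacle is the bar-compatibility in Step~2. In particular, the uniqueness statement must be applicable when $\Theta$ swaps the two normalizations $\Theta^\pm$ across the two conjugacy classes $\cFp$ and $\cFm$. For this I would treat each class separately and check directly that $\omega\Theta^\pm\omega^{-1}=\Theta^\mp$.
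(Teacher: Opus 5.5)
Your proposal is correct and follows essentially the same route as the paper. Both proofs use an $\cA$-linear relabelling map on $\cM$ that intertwines the Hecke action with $\alpha_k$, get bar-compatibility from the uniqueness in \cite[Corollary~4.16]{Zhang} via $\omega\Theta^\pm\omega^{-1}=\Theta^\mp$, apply triangular uniqueness of the canonical basis to obtain invariance of $\mu_\m$, and iterate Proposition~\ref{omega_prop} for the AMBC formula. The only blemish is cosmetic: you use $\Theta$ both for the module map and for $\Theta^\pm$ (the paper calls the map $F_k$). Also, the class-by-class treatment you flag as the main obstacle is unnecessary, since the uniqueness statement already imposes both normalizations $\overline{M_{\Theta^\pm}}=M_{\Theta^\pm}$ simultaneously.
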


\begin{proof}
Conjugation by $\omega^k$ induces the Coxeter-system automorphism
$\alpha_k(s_i)=s_{i+k}$ of $\tS_n$.  It preserves $\ellfpf$ and the
FPF Bruhat order.  Let the same symbol $\alpha_k$ denote the induced
automorphism of $\cH$, and define the $\cA$-linear bijection
\[
F_k:\cM\longrightarrow\cM,
\qquad F_k(M_x)=M_{\theta_k(x)}.
\]
The three cases in the standard-basis action in
\cite[Corollary~4.15]{Zhang} give
\[
F_k(HM)=\alpha_k(H)F_k(M)
\qquad(H\in\cH,\ M\in\cM).
\]
The map $\theta_k$ permutes the two minimal FPF involutions
$\Theta^+$ and $\Theta^-$.  The uniqueness of the bar operator in
\cite[Corollary~4.16]{Zhang} consequently implies
\[
F_k(\overline M)=\overline{F_k(M)}.
\]
Since $\theta_k$ also preserves the FPF Bruhat order, the triangular
uniqueness of the canonical basis gives
\[
F_k(\underline M_y)=\underline M_{\theta_k(y)}.
\]
Comparing standard-basis coefficients yields
\[
\m_{x,y}=\m_{\theta_k(x),\theta_k(y)}
\qquad\text{and}\qquad
\mu_\m(x,y)=\mu_\m(\theta_k(x),\theta_k(y)).
\]
The equality for $\nu_\m$ follows directly from its definition, and
substitution in the definition of $\omega_\m$ proves the asserted
equality of all directed edge weights.  The AMBC formula follows by
iterating Proposition~\ref{omega_prop}.
\end{proof}

\begin{proposition}\label{omega-it-prop}
For affine involutions $x,y$, one has
\[
x\simRBS{\widebar i}y
\quad\Longleftrightarrow\quad
\omega x\omega^{-1}
\simRBS{\overline{i+1}}
\omega y\omega^{-1}.
\]
\end{proposition}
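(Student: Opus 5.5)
The plan is to show that conjugation by $\omega$ intertwines every ingredient in the definition of $\mathsf S_i^L\circ\mathsf S_i^R$ with the corresponding ingredient for index $i+1$. Write $c(w)=\omega w\omega^{-1}$. Since $\omega s_i\omega^{-1}=s_{i+1}$ and $c$ preserves $\tS_n$, the map $c$ is a Coxeter-system automorphism of $\tS_n$ realizing $\alpha_1$. It therefore preserves length and carries descents to shifted descents:
\[
\ell(c(w))=\ell(w),\qquad
R(c(w))=R(w)+\overline 1,\qquad
L(c(w))=L(w)+\overline 1.
\]
To see this, note that $\ell(c(w)s_{j+1})=\ell(c(ws_j))=\ell(ws_j)$, and similarly on the left. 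In particular, $w\in\mathcal D_R(i)$ if and only if $c(w)\in\mathcal D_R(i+1)$, because the pair $\{\overline{i-1},\widebar i\}$ is carried to $\{\widebar i,\overline{i+1}\}$. The same statement holds for $\mathcal D_L$.

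The first step is the identity $c(\mathsf S_i^R(w))=\mathsf S_{i+1}^R(c(w))$. If $w\notin\mathcal D_R(i)$, both sides are trivial, since $c(w)\notin\mathcal D_R(i+1)$. If $w\in\mathcal D_R(i)$, then $c$ maps the set $\{ws_{i-1},ws_i\}$ bijectively onto $\{c(w)s_i,c(w)s_{i+1}\}$. By the descent statement above, $c$ sends the unique element lying in $\mathcal D_R(i)$ to the unique element lying in $\mathcal D_R(i+1)$. The left version $c(\mathsf S_i^L(w))=\mathsf S_{i+1}^L(c(w))$ follows by the same argument using $s_{i-1}w,s_iw$.

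Composing the two identities gives
\[
c\bigl(\mathsf S_i^L(\mathsf S_i^R(x))\bigr)=\mathsf S_{i+1}^L\bigl(\mathsf S_{i+1}^R(c(x))\bigr).
\]
Because $c$ is a bijection that preserves involutions, $y=\mathsf S_i^L(\mathsf S_i^R(x))$ holds if and only if $c(y)=\mathsf S_{i+1}^L(\mathsf S_{i+1}^R(c(x)))$. This is exactly the claimed equivalence.

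The only point requiring care is that $\omega\notin\tS_n$, so one must check that $c$ restricts to an automorphism of $\tS_n$ which permutes $S$ as $\alpha_1$. This is stated in Section~\ref{shift-sect}. An alternative route is to verify the equivalence directly from the case description in Theorem~\ref{direct-prop}. Since $c(x)(j+1)=x(j)+1$, the betweenness conditions on $x(i-1),x(i),x(i+1)$ translate to the same conditions on $c(x)(i),c(x)(i+1),c(x)(i+2)$. Moreover, $c(s_jxs_j)=s_{j+1}c(x)s_{j+1}$ and $c(x)(A+1)=A+1$ if and only if $x(A)=A$. Either route should suffice; I expect no serious obstacle.
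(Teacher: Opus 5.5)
Your proposal is correct and follows essentially the same route as the paper: conjugation by $\omega$ realizes the Coxeter-system automorphism $\alpha_1$, shifts left and right descent sets by one, carries $\mathcal D_R(i),\mathcal D_L(i)$ to $\mathcal D_R(i+1),\mathcal D_L(i+1)$, and therefore intertwines the rank-two star operations. You supply more detail than the paper, namely the uniqueness argument for the star operations. You also obtain both directions at once from injectivity of conjugation, whereas the paper gets the converse by reapplying the argument with $\omega^{-1}$.
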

\begin{proof}
Conjugation by $\omega$ sends $s_j$ to $s_{j+1}$ and shifts
both left and right descent sets by one.  It therefore carries
$\mathcal D_R(i)$ and $\mathcal D_L(i)$ to
$\mathcal D_R(i+1)$ and $\mathcal D_L(i+1)$, respectively, and
commutes with the corresponding extended rank-two star
operations.  Applying the same argument to $\omega^{-1}$ gives
the converse.
\end{proof}

\begin{proposition}\label{omega-prop}
If $w$ is an affine fixed-point-free involution, then $w$ and $\omega w\omega^{-1}$ cannot be connected by a sequence of involutive transformations.
\end{proposition}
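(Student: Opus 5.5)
We look for an invariant of involutive transformations on $\cF_n$ that changes under conjugation by $\omega$. The natural candidate is the AMBC tabloid $P$, taken up to the equivalence $\sim_D$. By Corollary~\ref{invol-cor}, a nontrivial involutive transformation $x\simRBS{\widebar i}y$ with $\Phi(x)=(P,P,\rho)$ gives $\Phi(y)=(D_j(P),D_j(P),\rho)$ for some $j$. Hence along any chain of involutive transformations starting at $w$, the weight $\rho$ is fixed and the common tabloid stays in the connected component of $P$ in $\mathcal A_\lambda$. By Proposition~\ref{omega_prop} (the involution clause), $\Phi(\omega w\omega^{-1})=(\omega(P),\omega(P),\rho)$. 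The weight therefore gives no obstruction, and everything reduces to one statement: $\omega(P)\not\sim_D P$ for every tabloid $P$ arising from an element of $\cF_n$.

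To prove $\omega(P)\not\sim_D P$, I would use a cyclic invariant of tabloid Knuth components, namely a charge-type statistic modulo $d_\lambda$. For a tabloid $T$, look at the local charges $\lch_r(T)$, or more robustly at the total charge $c(T)$ of \cite{CLP}, built from the charge matchings between rows. The plan has three steps.
\begin{enumerate}
\item Show that $c(D_i(T))\equiv c(T)$ modulo a suitable integer $m$, since tabloid Knuth moves should preserve charge; this is essentially the description of Knuth components in \cite{CLP}, where the components are indexed by charge.
\item Compute directly that $c(\omega(T))\equiv c(T)+1 \pmod m$, or at least that $c(\omega(T))\not\equiv c(T)$. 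To do this, track how the residue $\widebar n$, which becomes $\overline 1$, changes the wraparound count in its row.
\item Identify $m$ with $d_\lambda=\gcd(\lambda'_1,\lambda'_2,\ldots)$, so that rotation acts on the $d_\lambda$ components as a $d_\lambda$-cycle. This is consistent with the enumeration in Theorem~\ref{intro-molecule-thm}.
\end{enumerate}
With these, $\omega(P)$ and $P$ lie in different components as soon as $d_\lambda>1$.

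The main obstacle is the case $d_\lambda=1$. There $\mathcal A_\lambda$ may be connected, so the tabloid argument fails, and a chain of involutive transformations is in fact a path of bidirected moves inside a single AMBC fibre over $\rho$. The first thing I would try is to check that shapes of elements of $\cF_n$ always have $d_\lambda\ge 2$. This is plausible because an FPF involution pairs residues, which suggests every column length, or the column structure, is even; but this needs proof from the AMBC of involutions, and it is where I am least sure.

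If that check fails, the fallback is a direct invariant on $\cF_n$: the sign $\sfpf$, or a finer parity statistic such as $\beta$ modulo something. Each $s_jxs_j$ with $x(j)\neq j+1$ changes $\beta$ by a controlled amount, so $\sfpf$ is preserved within $\cFp$ and $\cFm$. Conjugation by $\omega$ exchanges $\Theta^+$ and $\Theta^-$, as used in the proof of Proposition~\ref{omega-wgraph-prop}. So the goal would be to show that $\theta_1$ swaps $\cFp$ and $\cFm$ in general, and not only on $\Theta^\pm$. This route would give the statement uniformly without any shape analysis, so I would attempt it first and keep the charge argument as a cross-check.
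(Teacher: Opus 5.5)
The argument you say you would attempt first is the paper's proof. Make it the proof rather than a fallback, because the step you leave as a ``goal'' takes one line. The map $x\mapsto\omega x\omega^{-1}$ normalizes $\tS_n$, and $\omega\Theta^+\omega^{-1}=s_2s_4\cdots s_n=\Theta^-$. Hence for every $u\in\tS_n$,
\[
\omega\,(u\Theta^+u^{-1})\,\omega^{-1}=(\omega u\omega^{-1})\,\Theta^-\,(\omega u\omega^{-1})^{-1}\in\cFm ,
\]
and symmetrically $\omega\cFm\omega^{-1}=\cFp$. You also do not need to track $\beta$. By Theorem~\ref{direct-prop}, every involutive transformation is either trivial or a conjugation by an element of $\tS_n$. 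It therefore preserves $\tS_n$-conjugacy classes, and so preserves $\cFp$ and $\cFm$, which are disjoint by \eqref{+-eq}. This completes the argument exactly as in the paper.

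Drop the charge argument rather than keeping it as a cross-check. By Proposition~\ref{molecule-number-prop}, $\omega(P)\sim_D P$ holds exactly when $d_\lambda=1$. So that route needs $d_\lambda\neq1$ for every $\lambda\in\Pafpf(n)$. The paper proves this only in its stronger form $2\mid d_\lambda$ (all column lengths even), in the proposition following Proposition~\ref{molecule-number-prop}. That proof uses the same conjugacy-class swap under odd powers of $\omega$, which is essentially the present statement. The obstacle you flagged is therefore real, and here it is resolved only by the sign argument. The charge statement is a consequence of that argument, not an independent check on it.
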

\begin{proof}
Suppose $w$ is in the conjugacy class of $\Theta^\pm$, then $\omega w\omega^{-1}$ is in the conjugacy class of $\Theta^\mp$. This is because we have $\omega\Theta^+\omega^{-1}=\Theta^-$. Then $w$ and $\omega w\omega^{-1}$ cannot be connected by a sequence of involutive transformations.
\end{proof}

\begin{theorem}\label{omega-thm}
Suppose $w$ is an affine fixed-point-free involution with $\Phi(w)=(P,P,\rho)$. For each $P'$ of the same shape as $P$, let $w'=\Psi(P',P',\rho)$. Then there exists $i\in[n]$ such that $w$ and $\omega^i w'\omega^{-i}$ are connected by a sequence of involutive transformations.
\end{theorem}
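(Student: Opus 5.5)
Throughout, fix $w$ with $\Phi(w)=(P,P,\rho)$ and a target tabloid $P'$ of the same shape. The proof combines two earlier facts: involutive transformations realize tabloid Knuth moves on the common tabloid while fixing $\rho$ (Corollary~\ref{invol-cor}), and conjugation by $\omega$ rotates the common tabloid while fixing $\rho$ (Proposition~\ref{omega_prop}). It then uses a connectivity statement for the tabloid Knuth graph modulo rotation.

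\textbf{Step 1: Knuth moves lift to involutive transformations.} Suppose $x\in\cF_n$ has $\Phi(x)=(T,T,\rho)$ and $D_j(T)\neq T$. By the converse part of Corollary~\ref{invol-cor}, $s_jxs_j$ is obtained from $x$ by one nontrivial involutive transformation, and $\Phi(s_jxs_j)=(D_j(T),D_j(T),\rho)$. Moreover $s_jxs_j$ is again in $\cF_n$, since it is a conjugate of an FPF involution. Inducting along a path in $\mathcal A_\lambda$, we get: whenever $T\sim_D T''$ and $x\in\cF_n$ has AMBC image $(T,T,\rho)$, the element $\Psi(T'',T'',\rho)$ is reached from $x$ by a sequence of involutive transformations, and all intermediate vertices lie in $\cF_n$.

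\textbf{Step 2: rotation.} By Proposition~\ref{omega_prop} (equivalently Proposition~\ref{omega-wgraph-prop}),
\[
\Phi(\omega^i w'\omega^{-i})=(\omega^i(P'),\omega^i(P'),\rho),
\]
and $\omega^i w'\omega^{-i}\in\cF_n$. There is one subtlety: we must know that $w'=\Psi(P',P',\rho)$ is an FPF involution at all, and that $(P',P',\rho)\in\dom$. The second point holds because dominance for $(P',P')$ only involves $\offset(P',P')=0$, so it is the same condition as for $(P,P)$. That $w'$ is an involution follows from the symmetry $\Phi(w^{-1})=(Q,P,\rho')$ of AMBC. For the fixed-point-free condition, the plan is to avoid proving it directly: we will show that $\omega^i w'\omega^{-i}$ equals an element reached from $w$ by Step 1, and that element is FPF. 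By Step 1, it suffices to find $i$ with
\[
\omega^i(P')\sim_D P .
\]

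\textbf{Step 3: the key combinatorial claim.} We need that the cyclic group generated by $\omega$ acts transitively on the connected components of $\mathcal A_\lambda$. For the finite parts we invoke \cite{CLP}. Their Definition~3.9 and surrounding results show that every component of the tabloid Knuth graph contains a column-superstandard tabloid $C_a$ for some $a$; this is the tabloid analogue of every Knuth class containing a superstandard tableau. Since $\omega^k(C_a)=C_{a+k}$ directly from the definition, this gives the transitivity.

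\textbf{Main obstacle.} Step 3 is the main obstacle. If the \cite{CLP} statement is only available for the $Q$-side, with the $\rho$-correction of Theorem~\ref{Kthm}, I would rederive it on the tabloid level. The approach would be to show by induction on rows that any tabloid can be moved by nontrivial $D_j$'s to some $C_a$, sorting small residues upward one at a time using moves whose $\tau$-invariants are incomparable. With transitivity established, choose $k$ such that $\omega^k(P')$ and $P$ lie in the same component, set $i\equiv k\pmod n$ in $[n]$, and apply Step 1. The residual FPF issue from Step 2 is then resolved, since $\omega^i w'\omega^{-i}$ is reached from $w$ within $\cF_n$.
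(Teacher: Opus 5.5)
Your proposal is correct and takes essentially the paper's route. You use that every component of $\mathcal A_\lambda$ contains some $C_a$ (the precise reference is \cite[Lemma~7.4]{CLP}). You also need that $\omega$ is an automorphism of $\mathcal A_\lambda$, because $\omega\circ D_j=D_{j+1}\circ\omega$; you use this implicitly and should state it. Together with $\omega(C_a)=C_{a+1}$, this gives $i$ with $\omega^i(P')\sim_D P$. You then lift the path by the converse of Corollary~\ref{invol-cor} and identify the endpoint by injectivity of $\Phi$, exactly as the paper does.

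The only divergence is at the end. You apply Proposition~\ref{omega_prop} to $w'$, which requires $w'$ to be an involution. That in turn needs the inverse weight $-\rev_\lambda(\rho)$ to equal $\rho$. This holds because $w$ is itself an involution, as in Lemma~\ref{rho-block-antisymmetry-lem}, but your appeal to ``the symmetry of AMBC'' leaves this step implicit. The paper avoids the issue by rotating the already-known FPF involution $z_r$ back and concluding $\omega^{-i}z_r\omega^{i}=\Psi(P',P',\rho)=w'$.
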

\begin{proof}
Recall the tabloid Knuth graph $\mathcal A_\lambda$ and its
column-superstandard tabloids $C_a$.
By \cite[Lemma 7.4]{CLP}, every connected component of
$\mathcal A_\lambda$ contains some $C_a$.

The shift $\omega$ is an automorphism of $\mathcal A_\lambda$,
since it cyclically permutes the tabloid Knuth operators, and
it satisfies
\[
\omega(C_a)=C_{a+1}.
\]
Choose $a,b\in\mathbb Z/n\mathbb Z$ such that 
\[
P\sim_D C_a
\qquad\text{and}\qquad
P'\sim_D C_b.
\]
Choose $i$ such that $b+i\equiv a\pmod n$. Applying $\omega^i$
to a path from $P'$ to $C_b$ gives
\[
\omega^i(P')\sim_D C_{b+i}=C_a.
\]
Hence
\[
P\sim_D \omega^i(P').
\]

Write this path as
\[
P=T_0\sim_D T_1\sim_D\cdots\sim_D T_r=\omega^i(P'),
\]
where $T_{k+1}=D_{j_k}(T_k)$. Since $\offset(T_k,T_k)=0$ for every $k$, dominance of $\rho$
with respect to $(T_k,T_k)$ depends only on $\lambda$ and
$\rho$. Hence $(T_k,T_k,\rho)\in\dom$.

Define
\[
z_k=\Psi(T_k,T_k,\rho).
\]
Then $\Phi(z_k)=(T_k,T_k,\rho)$, and the converse part of
Corollary~\ref{invol-cor} gives a nontrivial involutive
transformation from $z_k$ to $z_{k+1}$ (of type
$\widebar{j_k}$ or $\overline{j_k+1}$).
Starting with $z_0=w$, all the $z_k$ are affine
fixed-point-free involutions, since an involutive transformation
is either trivial or is given by conjugation, as in
Theorem~\ref{direct-prop} .

Finally,
\[
\Phi(z_r)=(\omega^i(P'),\omega^i(P'),\rho).
\]
By Proposition~\ref{omega_prop},
\[
\Phi(\omega^{-i}z_r\omega^i)=(P',P',\rho).
\]
Since $\Phi$ is injective on $\dom$, it follows
that
\[
\omega^{-i}z_r\omega^i
=\Psi(P',P',\rho)=w'.
\]
Thus $z_r=\omega^i w'\omega^{-i}$, and hence $w$ and
$\omega^i w'\omega^{-i}$ are connected by a sequence of
involutive transformations.
\end{proof}

In the following sections, we work with affine fixed-point-free involutions. In this case, we define 
\[
\Pafpf(n)
:=
\left\{
\lambda \vdash n \ \middle|\
\begin{array}{c}
\text{there exists } z\in\cF_n
\text{ such that}\\
\Phi(z)=(P,P,\rho)
\text{ for some }P\in\RSYT(\lambda)
\text{ and }\rho\in\mathbb{Z}^{\ell(\lambda)}
\end{array}
\right\}.
\]

\section{Molecules for \texorpdfstring{$\Gamma^\m_n$}{Gamma(m)}}\label{mole-sect}
Assume throughout this section that $n\geq4$ is even.  We first identify
the simple adjacency relation in the bidirected part of $\Gamma_n^{\m}$,
and then reinstate its canonical vertex descent labels and the two
directed weights on each retained edge.  No choice of a rank-two move
label will be needed.

\begin{proposition}[Rank-two criterion for bidirected edges]
\label{rank-two-edge-criterion}
Let $x<_{F}y$ in $\cF_n$.  There is a bidirected edge between
$x$ and $y$ in $\Gamma_n^{\m}$ if and only if there are
noncommuting simple reflections $s,t\in S$ such that
\[
txt\leq_F x<sxs=y<_{F}tyt.
\]
Equivalently, $\{s,t\}=\{s_{i-1},s_i\}$ for some $i\in\ZZ$.
Whenever these conditions hold,
\[
\omega_\m(x,y)=\omega_\m(y,x)=1.
\]
\end{proposition}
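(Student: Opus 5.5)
The plan is to treat this as an instance of Stembridge's theory of admissible $W$-graphs and simple edges, specialised to the quasiparabolic module $\cM$.

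\textbf{Easy direction.} Suppose $txt\leq_F x<sxs=y<_F tyt$ with $s,t$ noncommuting. Then $\ellfpf(y)=\ellfpf(x)+1$. The standard recursion for the canonical basis $\underline M$ (as in Kazhdan--Lusztig, here from \cite[Corollary~4.16]{Zhang} and the quasiparabolic multiplication rules) gives $\underline H_s\underline M_x=\underline M_y+\sum_{z}(\cdots)\underline M_z$ whenever $s\notin\nu_\m(x)$ and $sxs>_Fx$. This forces $\m_{x,y}=v^{-1}$, so $\mu_\m(x,y)=1$. In $\omega_\m(x,y)=\mu_\m(x,y)+\mu_\m(y,x)$ the term $\mu_\m(y,x)$ vanishes, since $\m_{y,x}=0$ for $y\not\leq_F x$. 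Hence both directed weights equal $\mu_\m(x,y)=1$, provided the descent conditions hold.

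The descent conditions come from the hypotheses. We have $s\in\nu_\m(y)\setminus\nu_\m(x)$, so $\nu_\m(y)\not\subset\nu_\m(x)$. We also have $t\in\nu_\m(x)$ and, since $y<_Ftyt$, $t\notin\nu_\m(y)$, so $\nu_\m(x)\not\subset\nu_\m(y)$. Both directed edges are therefore nonzero and each carries weight $1$. The condition that $s,t$ do not commute is exactly $\{s,t\}=\{s_{i-1},s_i\}$ for some $i$, because the Dynkin diagram of $\tilde A_{n-1}$ with $n\ge4$ is a cycle.

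\textbf{Converse.} Suppose there is a bidirected edge with $x<_Fy$. Then $\mu_\m(x,y)\neq0$, and both $\nu_\m(x)\not\subset\nu_\m(y)$ and $\nu_\m(y)\not\subset\nu_\m(x)$ hold. I will use the standard $W$-graph lemma (Kazhdan--Lusztig, Stembridge) adapted to the module $\cM$ via \cite[Theorem~3.26]{Marberg}: if $\mu(x,y)\ne0$, $x<y$, and some $s\in\nu(y)\setminus\nu(x)$, then $y=sxs$. This follows from the usual argument comparing $\underline H_s\underline M_x$ with the expansion of $\underline M_y$, because $s\in\nu_\m(y)$ gives $\underline H_s \underline M_y=(v+v^{-1})\underline M_y$.

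So $y=sxs$ with $\ellfpf(y)=\ellfpf(x)+1$. Next pick $t\in\nu_\m(x)\setminus\nu_\m(y)$. Then $txt\le_F x$ and $y<_Ftyt$. The twisted-conjugation case $\ellfpf(tyt)=\ellfpf(y)$ is excluded because that case puts $t$ in $\nu_\m(y)$. It remains to show that $s,t$ do not commute. If they commuted, the element $t$ would act compatibly across the square $x,sxs,txt,stxts$, and one checks that $t\in\nu_\m(x)\Leftrightarrow t\in\nu_\m(sxs)$. To verify this I plan to use the explicit criterion from the proof of Proposition~\ref{fpf-descent-label-prop}, namely $s_j\in\nu_\m(x)\iff x(j)>x(j+1)$. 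Conjugating by a commuting $s_i$ with $i\ne j\pm1$ permutes values only in the classes of $i,i+1$, and does not change the relative order of $x(j),x(j+1)$, apart from the case $x(j)\in\{i,i+1\}$ and $x(j+1)\in\{i,i+1\}$ modulo $n$, which needs a separate small check. That contradiction finishes the converse.

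\textbf{Main obstacle.} I expect the hardest step to be justifying the lemma "$\mu\neq0$ and $s\in\nu(y)\setminus\nu(x)$ imply $y=sxs$" in the quasiparabolic setting. The cases $\ellfpf(sxs)=\ellfpf(x)$, where $H_s$ acts by a scalar on $M_x$, must be handled so they cannot produce a $v^{-1}$ coefficient. I would derive it from the $W$-graph identity for $H_s\underline M_x$ with $s\notin\nu_\m(x)$, compare the $\underline M_y$ coefficients, and use degree bounds on $\m_{x,y}$.
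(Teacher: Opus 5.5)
Your proof is correct and has the same skeleton as the paper's. You choose $s\in\nu_\m(y)\setminus\nu_\m(x)$ and $t\in\nu_\m(x)\setminus\nu_\m(y)$. You use $\mu_\m(x,y)=\delta_{sxs,y}$ to force $y=sxs$, and in the other direction to get $\mu_\m(x,y)=1$. You kill $\mu_\m(y,x)$ by triangularity.

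The paper simply cites that identity as Marberg's Lemma~3.25(a). Your derivation from $\underline H_s\underline M_y=(v+v^{-1})\underline M_y$ is sound: comparing $M_z$-coefficients for $szs>_F z$ gives $\m_{z,y}=v^{-1}\m_{szs,y}$, so $\mu_\m(x,y)$ is the constant term of $\m_{sxs,y}$. Prefer it to the route in your last paragraph through the $W$-graph formula for $H_s\underline M_x$, which presupposes the weights you are computing. The case $\ellfpf(sxs)=\ellfpf(x)$ you worry about cannot arise: for $x\in\cF_n$ it means $sxs=x$, which puts $s\in\nu_\m(x)$.

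The genuine difference is the noncommuting step. The paper argues by heights, with no case analysis: if $st=ts$ then $tyt=s(txt)s$, so
\[
\ellfpf(tyt)\le\ellfpf(txt)+1\le\ellfpf(x)+1=\ellfpf(y),
\]
contradicting $y<_F tyt$.

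Your one-line-notation check also works, but the biconditional $t\in\nu_\m(x)\Leftrightarrow t\in\nu_\m(sxs)$ you state is false in the exceptional configuration. For $n=4$, $x=[4,3,2,1]$, $s=s_1$, $t=s_3$, one has $t\in\nu_\m(x)$, but $s_1xs_1=[3,4,1,2]$ does not have $t$ as a descent.

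Your argument survives because the only order-reversing configuration, $\{x(j),x(j+1)\}=\{i+kn,i+1+kn\}$, contradicts your hypotheses. If $x(j)>x(j+1)$ there, then $x(i)>x(i+1)$, i.e.\ $s\in\nu_\m(x)$. If $x(j)<x(j+1)$, then $t\notin\nu_\m(x)$. So your ``separate small check'' should prove only $t\in\nu_\m(x)\Rightarrow t\in\nu_\m(sxs)$, using $s\notin\nu_\m(x)$.

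Your route is more hands-on and depends on the explicit descent criterion $s_j\in\nu_\m(x)\iff x(j)>x(j+1)$. The paper's height argument avoids the case split and uses only that simple conjugation changes $\ellfpf$ by at most one.
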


\begin{proof}
Suppose first that the edge is bidirected.  Then
$\nu_\m(x)$ and $\nu_\m(y)$ are incomparable.  Choose
\[
s\in\nu_\m(y)\setminus\nu_\m(x),
\qquad
t\in\nu_\m(x)\setminus\nu_\m(y).
\]
Thus $sys\leq_Fy$, $sxs>_Fx$, $txt\leq_Fx$, and
$tyt>_Fy$.  Since $x<_{F}y$, triangularity gives
$\mu_\m(y,x)=0$.  Lemma~3.25(a) of \cite{Marberg}, applied
with $s$, now gives
\[
0\neq\mu_\m(x,y)=\delta_{sxs,y},
\]
so $y=sxs$ and the displayed chain follows.

The reflections $s$ and $t$ cannot commute.  Otherwise
$tyt=s(txt)s$, while the FPF length changes by at most one
under conjugation by a simple reflection.  This would give
\[
\ellfpf(tyt)\leq\ellfpf(txt)+1
\leq\ellfpf(x)+1=\ellfpf(y),
\]
contrary to $tyt>_Fy$.  In affine type $A_{n-1}$ with $n\geq4$,
two distinct noncommuting simple reflections are a cyclically
adjacent pair.

Conversely, the displayed chain implies that
\[
s\in\nu_\m(y)\setminus\nu_\m(x),
\qquad
t\in\nu_\m(x)\setminus\nu_\m(y).
\]
Lemma~3.25(a) of
\cite{Marberg} gives $\mu_\m(x,y)=1$, so both directed edge
weights are nonzero and the edge is bidirected.
In either direction, triangularity gives $\mu_\m(y,x)=0$, while the
two descent labels are incomparable.  Hence the definition of
$\omega_\m$ gives
$\omega_\m(x,y)=\omega_\m(y,x)=1$.
\end{proof}

\begin{proposition}\label{bi-edge-it-prop}
Distinct elements $x,y\in\cF_n$ are joined by a bidirected edge
in $\Gamma_n^{\m}$ if and only if
\[
x\simRBS{\widebar i}y
\]
for some $i\in\ZZ$.
\end{proposition}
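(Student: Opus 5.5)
The argument combines Proposition~\ref{rank-two-edge-criterion} with Theorem~\ref{direct-prop} and compares the two descriptions in both directions. Since an involutive transformation is symmetric in outcome (conjugation by a simple reflection), it suffices to treat an ordered pair with $x<_F y$ after relabelling.

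\emph{Bidirected edge $\Rightarrow$ involutive transformation.} Suppose $x\neq y$ are joined by a bidirected edge. By Proposition~\ref{rank-two-edge-criterion}, after swapping names we may assume $x<_F y=sxs$. We also have $txt\le_F x$ and $tyt>_F y$ with $\{s,t\}=\{s_{i-1},s_i\}$. Translate these via the formula of \cite[Corollary~3.16]{Zhang} quoted in the proof of Proposition~\ref{fpf-descent-label-prop}. The conditions say that $t\in\nu_\m(x)\setminus\nu_\m(y)$ and $s\in\nu_\m(y)\setminus\nu_\m(x)$. Hence exactly one of $s_{i-1},s_i$ lies in $R(x)=L(x)$, so $x\in\mathcal D_R(i)\cap\mathcal D_L(i)$. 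In the window $x(i-1),x(i),x(i+1)$ this forces $x(i)$ not to be the middle value. Since $x$ is FPF, $x(A)\ne A$, so Theorem~\ref{direct-prop} gives $\mathsf S_i^L\mathsf S_i^R(x)=s_jxs_j$ for the appropriate $j\in\{i-1,i\}$.

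It remains to check that $s_j=s$. The case analysis runs as follows. If $s=s_i$ (so $t=s_{i-1}$), then $x(i-1)>x(i)$ and $x(i)<x(i+1)$. Moreover $s_ixs_i>_Fx$ strictly, since equality of $\ellfpf$ is excluded by $y\neq x$ and $\ellfpf(y)>\ellfpf(x)$. Then $y=s_ixs_i$ has a descent at $i$ and no descent at $i-1$. The latter condition says $y(i-1)<y(i)$, i.e.\ $x(i-1)<x(i+1)$. So $x(i)<x(i-1)<x(i+1)$, meaning $x(i-1)$ is between the other two, and Theorem~\ref{direct-prop} gives $s_ixs_i=y$. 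The case $s=s_{i-1}$ is symmetric. Thus $x\simRBS{\widebar i}y$.

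\emph{Involutive transformation $\Rightarrow$ bidirected edge.} Conversely, suppose $x\simRBS{\widebar i}y$ with $x\ne y$. Theorem~\ref{direct-prop} (the fourth case being impossible for FPF) gives $y=s_jxs_j$ in one of the two nontrivial cases; say $x(i-1)$ lies between $x(i)$ and $x(i+1)$, so $y=s_ixs_i$. If $x(i)<x(i-1)<x(i+1)$, then $x$ has a descent at $i-1$ but not at $i$, and $x(i)<x(i+1)$ gives $y>_F x$. Put $s=s_i$ and $t=s_{i-1}$. The conditions $txt\le_F x$ and $sys\le_F y$ are immediate. For $tyt>_F y$, note that $y(i-1)=x(i-1)$ and $y(i)=x(i+1)$, so $y(i-1)<y(i)$; hence $t$ is an ascent of $y$ by the same corollary. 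The chain of Proposition~\ref{rank-two-edge-criterion} therefore holds. If instead $x(i+1)<x(i-1)<x(i)$, the roles reverse: $y<_F x$, and we apply the criterion to the pair $(y,x)$. The case $y=s_{i-1}xs_{i-1}$ is handled by the mirror argument.

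\emph{Main obstacle.} The main difficulty is bookkeeping: showing that the rank-two pair and the reflection selected by Theorem~\ref{direct-prop} match the $s$ of the edge criterion. There is one further subtlety. In the descent formula, $x(i)=i+1$ gives equality of $\ellfpf$ rather than a decrease, so strictness of $<_F$ must be checked. It is obtained from $x\neq y$: if $x(i)=i+1$ then $s_ixs_i=x$, so this case never arises for a nontrivial conjugation.
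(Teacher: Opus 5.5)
Your proof is correct in substance and follows essentially the paper's route. Both directions go through Proposition~\ref{rank-two-edge-criterion} and the FPF length formula of \cite[Corollary~3.16]{Zhang}. The only difference is bookkeeping: you use the betweenness cases of Theorem~\ref{direct-prop}, whereas the paper reads the two star operations off directly from the descent patterns $R(x)\cap\{s,t\}=\{t\}$ and $R(y)\cap\{s,t\}=\{s\}$.

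\textbf{The step that needs repair.} One justification is wrong as written, although the conclusion it supports is true.
\begin{itemize}
\item The equalities $y(i-1)=x(i-1)$ and $y(i)=x(i+1)$ hold for $xs_i$, not for $y=s_ixs_i$. In general $y(i-1)=s_i(x(i-1))$ and $y(i)=s_i(x(i+1))$, with $s_i$ acting on values.
\item For example, take $n=4$, $x=\Theta^+=[2,1,4,3]$ and $i=2$. You are then in your case $x(i)<x(i-1)<x(i+1)$, and $y=s_2xs_2=[3,4,1,2]$ has $y(1)=3\neq x(1)$.
\item What you actually need is only that $y(i-1)<y(i)$ if and only if $x(i-1)<x(i+1)$.
\item This holds because $s_i$ acting on values reverses the order of two integers only when they are $i+kn$ and $i+1+kn$ for a common $k$.
\item The pair $\{x(i-1),x(i+1)\}$ is never of that form. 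If $x(i+1)=i+1+kn$, then the involution property and periodicity give $x(i+1)=i+1-kn$, hence $k=0$ and a fixed point. If $x(i-1)=i+1+kn$ and $x(i+1)=i+kn$, the same reasoning forces $2kn=-1$.
\end{itemize}
The same repair is needed in two other places: your step ``$y(i-1)<y(i)$, i.e.\ $x(i-1)<x(i+1)$'' in the first direction, and the mirror cases with $s_{i-1}$ in place of $s_i$.

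\textbf{Two points passed over quickly.} You, like the paper, treat these briefly, and each deserves a line.
\begin{itemize}
\item A bidirected edge forces $x$ and $y$ to be $\leq_F$-comparable, since by triangularity $\mu_\m(x,y)\neq0$ requires $x<_Fy$. This is needed before you invoke Proposition~\ref{rank-two-edge-criterion}, which assumes $x<_Fy$.
\item The symmetry of $\simRBS{\widebar i}$ follows from the rank-two star operations being involutions, or from checking that $y$ has the same betweenness pattern as $x$. It does not follow merely from the outcome being a conjugation.
\end{itemize}
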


\begin{proof}
Let $\{s,t\}=\{s_{i-1},s_i\}$.  A nontrivial rank-two
transformation interchanges the unique member of this pair in
the right descent set.  Since its endpoints are involutions,
their left and right descent sets coincide.  After interchanging
$x$ and $y$ if necessary, the transformation therefore has
\[
R(x)\cap\{s,t\}=\{t\},
\qquad
R(y)\cap\{s,t\}=\{s\},
\qquad
y=sxs.
\]
Here Theorem~\ref{direct-prop} has no three-point exceptional case:
if an involution preserved $\{i-1,i,i+1\}$, its restriction to
this odd set would have a fixed point, contrary to $x\in\cF_n$.
The FPF length formula of \cite[Corollary~3.16]{Zhang} gives
\[
txt\leq_Fx<sxs=y<_Ftyt.
\]
Proposition~\ref{rank-two-edge-criterion} now gives a
bidirected edge.

Conversely, a chain in Proposition~\ref{rank-two-edge-criterion}
has exactly the two displayed descent patterns.  The right
rank-two star operation therefore multiplies by $s$, and the
left rank-two star operation sends $xs$ to $sxs=y$.  Hence
$x\simRBS{\widebar i}y$.
\end{proof}

Recall that $\mathcal A_\lambda$ is the tabloid Knuth graph of shape
$\lambda$ defined in Section~\ref{Knuth-sect}.

\begin{proposition}[AMBC identification of the bidirected graph]
\label{ambc-bidirected-graph-prop}
Fix a shape $\lambda$ and a dominant weight $\rho$ occurring for
an element of $\cF_n$, and set
\[
V_{\lambda,\rho}
=
\{x\in\cF_n:\Phi(x)=(P,P,\rho),\
  \operatorname{shape}(P)=\lambda\}.
\]
Equip $\mathcal A_\lambda$ with the vertex label
\[
T\longmapsto\{s_i:\widebar i\in\tau(T)\}
\]
and put weight one on both directions of every edge.  Then the map
\[
V_{\lambda,\rho}\longrightarrow\RSYT(\lambda),
\qquad
x\longmapsto P(x),
\]
is an isomorphism from the descent-labeled, weighted bidirected graph
induced by $\Gamma_n^{\m}$ on $V_{\lambda,\rho}$ to
$\mathcal A_\lambda$ with this additional structure.
\end{proposition}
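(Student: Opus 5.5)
We prove the statement in three steps: the map is a bijection on vertices, it carries edges to edges in both directions, and it respects labels and weights.

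\emph{Bijectivity.} Let $x\in V_{\lambda,\rho}$. Then $\Phi(x)=(P,P,\rho)$ with $P\in\RSYT(\lambda)$. Because $\offset(P,P)=0$, dominance of $\rho$ with respect to $(T,T)$ depends only on $\lambda$ and $\rho$, as already observed in the proof of Theorem~\ref{omega-thm}. Hence $(T,T,\rho)\in\dom$ for every $T\in\RSYT(\lambda)$. Injectivity of $x\mapsto P(x)$ follows from injectivity of $\Phi$ on $\dom$. For surjectivity we must show that $\Psi(T,T,\rho)$ lies in $\cF_n$, not merely in $\tS_n$. Fix one $x_0\in V_{\lambda,\rho}$ with tabloid $P_0$. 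By Theorem~\ref{omega-thm}, for each $T$ some conjugate $\omega^i\Psi(T,T,\rho)\omega^{-i}$ is reached from $x_0$ by involutive transformations. Involutive transformations preserve $\cF_n$ by Theorem~\ref{direct-prop}, since they are either trivial or conjugation by a simple reflection. Conjugation by $\omega^{-i}$ also preserves $\cF_n$, so $\Psi(T,T,\rho)\in\cF_n$. The same argument, run directly inside the proof of Theorem~\ref{omega-thm}, shows that Corollary~\ref{invol-cor} keeps $\rho$ fixed along such paths. Thus the map is a bijection $V_{\lambda,\rho}\to\RSYT(\lambda)$.

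\emph{Edges.} Take distinct $x,y\in V_{\lambda,\rho}$. By Proposition~\ref{bi-edge-it-prop}, they are joined by a bidirected edge exactly when $x\simRBS{\widebar i}y$ for some $i$. Suppose such a nontrivial transformation exists. Corollary~\ref{invol-cor} then gives $y=s_jxs_j$ and $P(y)=D_j(P(x))\neq P(x)$, so $P(x)$ and $P(y)$ are adjacent in $\mathcal A_\lambda$. Conversely, suppose $P(y)=D_j(P(x))\neq P(x)$. The converse part of Corollary~\ref{invol-cor} produces $s_jxs_j$ with AMBC image $(D_j(P(x)),D_j(P(x)),\rho)$, obtained from $x$ by a nontrivial involutive transformation. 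Injectivity of $\Phi$ forces $s_jxs_j=y$, so $x$ and $y$ are joined by a bidirected edge. We must also rule out bidirected edges in $\Gamma_n^{\m}$ leaving $V_{\lambda,\rho}$ to a vertex with a different $\rho$; this is not an issue, because Corollary~\ref{invol-cor} shows every such edge preserves $\lambda$ and $\rho$. Hence the induced graph on $V_{\lambda,\rho}$ is exactly the union of those molecules.

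\emph{Labels and weights.} Proposition~\ref{fpf-descent-label-prop} gives $\nu_\m(x)=\tau(P(x))$ under $s_i\leftrightarrow\widebar i$, which is the prescribed vertex label on $\mathcal A_\lambda$. Proposition~\ref{rank-two-edge-criterion} gives $\omega_\m(x,y)=\omega_\m(y,x)=1$ on every bidirected edge, matching the weight one placed on both directions in $\mathcal A_\lambda$.

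\emph{Main obstacle.} The delicate step is surjectivity: showing that $\Psi(T,T,\rho)$ is a fixed-point-free involution for every $T$ of shape $\lambda$, rather than only for $T$ in the Knuth component of $P_0$. The route through Theorem~\ref{omega-thm} with the $\omega$-conjugation handles this, and it needs the fact from Proposition~\ref{omega_prop} that conjugating an involution by $\omega$ leaves $\rho$ unchanged.
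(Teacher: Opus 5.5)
Your proposal is correct and follows the paper's proof essentially step for step. The ingredients match:
\begin{itemize}
\item injectivity comes from injectivity of AMBC;
\item surjectivity comes from Theorem~\ref{omega-thm}, where you usefully spell out that $\Psi(T,T,\rho)\in\cF_n$ because both involutive transformations and conjugation by $\omega$ preserve $\cF_n$;
\item adjacency in both directions comes from Proposition~\ref{bi-edge-it-prop} together with the forward and converse parts of Corollary~\ref{invol-cor};
\item the labels and weights come from Propositions~\ref{fpf-descent-label-prop} and~\ref{rank-two-edge-criterion}.
\end{itemize}
Your extra remark that no bidirected edge leaves $V_{\lambda,\rho}$ is not needed for the statement itself. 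It is correct, and it is exactly what the subsequent molecule count relies on.
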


\begin{proof}
Injectivity follows from the injectivity of AMBC.  Surjectivity
follows from Theorem~\ref{omega-thm}: starting with one element
of $V_{\lambda,\rho}$, every tabloid $T$ of shape $\lambda$
occurs in a triple $(T,T,\rho)$ belonging to an affine
fixed-point-free involution.

If $x$ and $y$ are bidirected neighbors, Proposition~\ref{bi-edge-it-prop}
and Corollary~\ref{invol-cor} show that their common tabloids are
$P$ and $D_j(P)$ for the actual Knuth position $j$.  Conversely,
if $D_j(P)\neq P$, the converse part of
Corollary~\ref{invol-cor} gives a rank-two involutive
transformation between $\Psi(P,P,\rho)$ and
$\Psi(D_j(P),D_j(P),\rho)$; Proposition~\ref{bi-edge-it-prop}
then gives a bidirected edge.  Thus adjacency is preserved in
both directions.  Proposition~\ref{fpf-descent-label-prop} identifies
the vertex labels, while Proposition~\ref{rank-two-edge-criterion}
shows that both directed weights on each edge are one.
\end{proof}

Now we analyze the molecules.
\begin{theorem}\label{mol-m-thm}
Two elements $w,v\in\cF_n$ lie in the same molecule of $\Gamma^\m_n$ if and only if they are connected by a sequence of involutive transformations.
\end{theorem}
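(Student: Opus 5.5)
The statement is essentially a reformulation of Proposition~\ref{bi-edge-it-prop} at the level of connected components, so the plan is to compare two graphs on the common vertex set $\cF_n$. The first graph $G_{\mathrm{bi}}$ has an edge between distinct $x,y$ whenever $\Gamma_n^{\m}$ has a bidirected edge between them. The second graph $G_{\rBS}$ has an edge between distinct $x,y$ whenever $x\simRBS{\widebar i}y$ for some $i\in\ZZ$. By definition, the molecules are the connected components of $G_{\mathrm{bi}}$. Being ``connected by a sequence of involutive transformations'' means lying in the same connected component of the reflexive--symmetric--transitive closure of the relations $\simRBS{\widebar i}$. It therefore suffices to show that $G_{\mathrm{bi}}=G_{\rBS}$ as simple graphs, and that the closure of the transformation relation is exactly connectivity in $G_{\rBS}$.

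First, I would check that the involutive transformation relation stays inside $\cF_n$ and is symmetric. By Theorem~\ref{direct-prop}, together with the remark after it excluding the case $x(A)=A$ for $x\in\cF_n$, every transformation from $x\in\cF_n$ either fixes $x$ or replaces it by $s_jxs_j$, which is again in $\cF_n$. For symmetry, suppose $y=s_jxs_j\neq x$ arises from a transformation of type $\widebar i$. I would recheck the betweenness condition of Theorem~\ref{direct-prop} for $y$ at the same triple $\{i-1,i,i+1\}$. Conjugating by $s_j$ swaps the two relevant positions and the two relevant values, so the same case applies to $y$ and returns $x$. One can also argue via descents: the star operations $\mathsf S_i^R$ and $\mathsf S_i^L$ are involutions on $\mathcal D_R(i)$ and $\mathcal D_L(i)$. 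Trivial transformations ($y=x$) contribute only loops, which do not affect connectivity. Hence sequences of involutive transformations are exactly paths in $G_{\rBS}$.

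Second, Proposition~\ref{bi-edge-it-prop} states precisely that distinct $x,y\in\cF_n$ are bidirected neighbours in $\Gamma_n^{\m}$ if and only if $x\simRBS{\widebar i}y$ for some $i$. Thus the edge sets coincide, with one caveat: one direction of that proposition produces the transformation from the lower element to the higher one, so the symmetry established in the first step is needed to read the edge as undirected. It follows that $G_{\mathrm{bi}}=G_{\rBS}$, so the components agree, which proves the theorem.

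The only step I expect to need care is the symmetry of $\simRBS{\widebar i}$ in the first step. The definition $y=\mathsf S_i^L(\mathsf S_i^R(x))$ is not manifestly symmetric. The cleanest route is to note that $\mathsf S_i^R$ and $\mathsf S_i^L$ are commuting involutions; they commute because left and right multiplications commute and the two descent conditions are independent. I would fall back on the explicit case check via Theorem~\ref{direct-prop} if commutation becomes delicate near the boundary of $\mathcal D_R(i)\cap\mathcal D_L(i)$.
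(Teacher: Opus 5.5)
Your proposal is correct and is essentially the paper's proof, which derives the theorem directly from Proposition~\ref{bi-edge-it-prop}. You also check two points the paper leaves implicit: that involutive transformations keep $\cF_n$ closed, and that $\simRBS{\widebar i}$ is symmetric. For the symmetry step, keep the explicit case check via Theorem~\ref{direct-prop}: the commutation of $\mathsf S_i^L$ and $\mathsf S_i^R$ does hold here, but it needs a double-coset argument and does not follow merely from left and right multiplications commuting.
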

\begin{proof}
This follows from Proposition~\ref{bi-edge-it-prop}.
\end{proof}

\begin{corollary}
If two affine fixed-point-free involutions $w$ and $v$ lie in the same molecule of $\Gamma^\m_n$, then they have the same FPF sign and their AMBC images have the same shape and dominant weight.
\end{corollary}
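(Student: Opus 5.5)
The plan is to reduce everything to Theorem~\ref{mol-m-thm} and to track the three invariants along a single nontrivial involutive transformation. By Theorem~\ref{mol-m-thm}, $w$ and $v$ are joined by a chain
\[
w=z_0\simRBS{\widebar{i_1}}z_1\simRBS{\widebar{i_2}}\cdots\simRBS{\widebar{i_r}}z_r=v
\]
of involutive transformations. We may discard the trivial steps, since for those $z_{k}=z_{k-1}$. It therefore suffices to show that one nontrivial step preserves the FPF sign, the AMBC shape and the dominant weight. The full statement then follows by induction on $r$.

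For the AMBC part, fix a nontrivial step $x\simRBS{\widebar i}y$ with $x\in\cF_n$ and $\Phi(x)=(P,P,\rho)$. Corollary~\ref{invol-cor} gives $y=s_jxs_j$ for the actual Knuth position $j\in\{i-1,i\}$, and $\Phi(y)=(D_j(P),D_j(P),\rho)$. The operator $D_j$ either fixes $P$ or exchanges the residues $\widebar j$ and $\overline{j+1}$, so it does not change the shape of the tabloid. The weight $\rho$ appears unchanged in the formula; in the case $\widebar j=\widebar n$ this rests on the cancellation of the two correction terms from Theorem~\ref{Kthm} and Proposition~\ref{dKprop}, which is already built into Corollary~\ref{invol-cor}. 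So shape and weight are constant along the chain.

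For the sign, $y=s_jxs_j$ is a conjugate of $x$ by an element of $\tS_n$. By \eqref{+-eq}, $\cFp$ and $\cFm$ are exactly the $\tS_n$-conjugacy classes of $\Theta^+$ and $\Theta^-$, and $\sfpf$ takes the constant values $\pm1$ on them. Hence $\sfpf(y)=\sfpf(x)$.

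I expect no step to be a real obstacle. The only points needing care are already handled earlier in the paper. The first is that each nontrivial transformation is a single conjugation $s_jxs_j$, not the three-point exceptional case $t_{i-1,i+1}xt_{i-1,i+1}$ of Theorem~\ref{direct-prop}; that case is excluded because an involution of an odd set has a fixed point. The second is the weight cancellation at $\widebar j=\widebar n$.
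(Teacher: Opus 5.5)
Your proposal is correct and follows the argument the paper leaves implicit, since the corollary is stated without proof directly after Theorem~\ref{mol-m-thm}. Shape and weight are preserved by Corollary~\ref{invol-cor}. The FPF sign is preserved because each nontrivial involutive transformation is a conjugation $s_jxs_j$ (Theorem~\ref{direct-prop}, with the three-point case excluded), so it stays inside $\cFp$ or $\cFm$.
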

Note that this is just a necessary condition, which is not sufficient, according to the following example.

For $n=4$, by definition of molecule, we can find such two molecules:
\[
\{[4,3,2,1],[-4,3,2,9],[3,-4,1,10],[-5,4,9,2],[4,-5,10,1],[4,11,-6,1]\}
\]
and 
\[
\{[0,-1,6,5],[0,7,-2,5],[7,0,-3,6],[-1,8,5,-2],[8,-1,6,-3],[-8,-1,6,13]\}.
\]
All these elements have FPF sign $1$, and their AMBC images have the same shape $\ytab{ \\ \\ \\ \\ }$ and the same dominant weight $\ytab{0\\0\\0\\0}$.

\begin{theorem}[Cross-weight symmetry of the bidirected parts]
\label{omega1-thm}
Let $M$ and $M'$ be two molecules of $\Gamma^\m_n$ with the same
AMBC shape $\lambda$, with arbitrary AMBC weights.  Then there are
$k\in\ZZ$ and an $\alpha_k$-twisted isomorphism from the
descent-labeled, weighted bidirected graph on $M$ to the corresponding
graph on $M'$.  In particular, the isomorphism preserves both directed
weights on every retained edge, and all these weights are equal to one.
\end{theorem}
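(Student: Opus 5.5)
The plan is to transport everything to the tabloid Knuth graph $\mathcal A_\lambda$ by Proposition~\ref{ambc-bidirected-graph-prop}, and to use the cyclic shift $\omega$ to move between connected components of $\mathcal A_\lambda$.

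Let $M$ have AMBC weight $\rho$ and $M'$ have AMBC weight $\rho'$, both of shape $\lambda$. By Theorem~\ref{mol-m-thm} and Corollary~\ref{invol-cor}, the weight is constant along a molecule. Hence $M\subseteq V_{\lambda,\rho}$ and $M'\subseteq V_{\lambda,\rho'}$. Proposition~\ref{ambc-bidirected-graph-prop} then has two consequences:
\begin{itemize}
\item $x\mapsto P(x)$ restricts to an isomorphism of descent-labelled weighted bidirected graphs from $M$ onto a connected component $K$ of $\mathcal A_\lambda$, carrying the $\tau$-labels;
\item similarly, $M'$ maps isomorphically onto a component $K'$.
\end{itemize}
Here one checks that a molecule maps onto a whole component, not just into one. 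This holds because adjacency is preserved in both directions and the map $V_{\lambda,\rho}\to\RSYT(\lambda)$ is bijective. Both directed weights are one on each side. It therefore suffices to produce $k$ and a graph isomorphism $K\to K'$ that twists $\tau$-labels by $\alpha_k$.

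For the component-level step I use the shift on tabloids. By \cite[Lemma~7.4]{CLP}, as in the proof of Theorem~\ref{omega-thm}, $K$ contains some $C_a$ and $K'$ contains some $C_b$. The map $T\mapsto\omega(T)$ is a bijection of $\RSYT(\lambda)$ with $D_{i+1}(\omega(T))=\omega(D_i(T))$. It satisfies $\tau(\omega(T))=\tau(T)+\overline 1$ directly from the definition of $\tau$, since $\bar i$ lies above $\overline{i+1}$ in $T$ if and only if $\overline{i+1}$ lies above $\overline{i+2}$ in $\omega(T)$. Therefore $\omega^k$ is an automorphism of $\mathcal A_\lambda$ that twists labels by $\alpha_k$. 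Taking $k\equiv b-a$, we have $\omega^k(C_a)=C_b$, so $\omega^k$ maps $K$ onto $K'$.

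Composing these maps gives the required $\alpha_k$-twisted isomorphism $M\to K\to K'\to M'$. The weights are preserved because all of them equal one, by Proposition~\ref{rank-two-edge-criterion}. If $\rho=\rho'$ one could instead use $\theta_k$ from Proposition~\ref{omega-wgraph-prop}, but for $\rho\neq\rho'$ no global automorphism of $\Gamma_n^\m$ is needed. The argument only uses the tabloid side, which does not see $\rho$.

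The main obstacle is confirming that the image of a molecule is exactly one full component of $\mathcal A_\lambda$. This rests on surjectivity in Proposition~\ref{ambc-bidirected-graph-prop}: every $(T,T,\rho)$ with $T$ of shape $\lambda$ must come from an element of $\cF_n$, which Theorem~\ref{omega-thm} supplies. I would make this explicit rather than cite it loosely. I would also verify the commutation $\omega D_i=D_{i+1}\omega$, including the wrap-around index $i=n$. Since $D_i$ is defined purely through residues and $\tau$, this is a direct check.
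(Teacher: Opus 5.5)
Your proposal is correct and follows essentially the paper's proof. Both identify $M$ and $M'$ with components of $\mathcal A_\lambda$ via Proposition~\ref{ambc-bidirected-graph-prop} and transport one to the other by $\Psi(T,T,\rho)\mapsto\Psi(\omega^k(T),\omega^k(T),\rho')$, with the shift of $\tau$ giving the $\alpha_k$-twist and Proposition~\ref{rank-two-edge-criterion} giving the unit weights. The only difference is the source of transitivity of the rotation action on components: you derive it from \cite[Lemma~7.4]{CLP} and $\omega(C_a)=C_{a+1}$, as in the proof of Theorem~\ref{omega-thm}, whereas the paper cites \cite[Remark~8.8]{CLP} directly.
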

\begin{proof}
Let $\rho$ and $\rho'$ be the AMBC weights of $M$ and $M'$, and let
$C$ and $C'$ be the corresponding connected components of
$\mathcal A_\lambda$ under
Proposition~\ref{ambc-bidirected-graph-prop}.  By
\cite[Remark~8.8]{CLP}, cyclic rotation permutes the components of
$\mathcal A_\lambda$ transitively.  Choose $k\in\ZZ$ such that
$C'=\omega^k(C)$.  The rule
\[
\Psi(T,T,\rho)\longmapsto
\Psi(\omega^k(T),\omega^k(T),\rho')
\qquad(T\in C)
\]
is a bijection from $M$ to $M'$.  Proposition~\ref{ambc-bidirected-graph-prop}
shows that it preserves bidirected adjacency and both directed edge
weights.  Moreover,
\[
\{s_i:\widebar i\in\tau(\omega^k(T))\}
=
\alpha_k\bigl(\{s_i:\widebar i\in\tau(T)\}\bigr),
\]
so it rotates the descent labels by $\alpha_k$.  This proves
the result.  Notice that the argument imposed no relation between
$\rho$ and $\rho'$.
\end{proof}

For a partition $\lambda\vdash n$, recall that $\lambda'$ denotes its
conjugate partition and
\[
d_\lambda
:=
\gcd(\lambda'_1,\lambda'_2,\ldots).
\]
Equivalently, $d_\lambda$ is the greatest common divisor of the multiplicities of the distinct parts of $\lambda$.

Extend the local charge convention by setting $\lch_i(T)=0$ whenever
$\lambda_i\neq\lambda_{i+1}$.  Following
\cite[Definition~8.3]{CLP}, define the \defn{charge} of a tabloid
$T$ of shape $\lambda$ by
\[
\operatorname{charge}(T)
:=
\sum_{i=1}^{\ell(\lambda)-1}i\,\lch_i(T).
\]

\begin{proposition}\label{molecule-number-prop}
Let $\lambda\in\Pafpf(n)$, and let
$\rho$ be a dominant weight for which there exists
$z\in\cF_n$ such that
\[
\Phi(z)=(P,P,\rho)
\qquad\text{and}\qquad
\operatorname{shape}(P)=\lambda.
\]
Then the number of molecules of $\Gamma_n^{\mathrm m}$ having
AMBC shape $\lambda$ and dominant weight $\rho$ is $d_\lambda$.

Equivalently, for any tabloid $P$ of shape $\lambda$, the least
positive integer $m$ such that $P$ and $\omega^m(P)$ are in the
same connected component of $\mathcal A_\lambda$ is
$d_\lambda$.
\end{proposition}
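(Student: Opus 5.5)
By Proposition~\ref{ambc-bidirected-graph-prop} and Theorem~\ref{mol-m-thm}, the molecules with AMBC shape $\lambda$ and weight $\rho$ correspond bijectively to the connected components of $\mathcal A_\lambda$. The map is $M\mapsto\{P(x):x\in M\}$, since the vertex set $V_{\lambda,\rho}$ is identified with $\RSYT(\lambda)$ and bidirected edges correspond to tabloid Knuth edges. It therefore suffices to count the components of $\mathcal A_\lambda$, and to show that this count equals the least positive $m$ with $P\sim_D\omega^m(P)$.

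First I would show that $\omega$ acts transitively on the components and that the stabilizer of every component is the same subgroup. The shift $\omega$ is an automorphism of $\mathcal A_\lambda$, so it permutes components. By \cite[Lemma~7.4]{CLP} every component contains some $C_a$, and $\omega(C_a)=C_{a+1}$, so the action is transitive. The group $\langle\omega\rangle\cong\ZZ/n\ZZ$ is abelian, so every component has the same stabilizer $m\ZZ/n\ZZ$, where $m$ is the least positive integer with $C_0\sim_D C_m$. By transitivity this is also the least positive $m$ with $P\sim_D\omega^m(P)$, for any $P$. Hence the number of components equals $m$, and this gives the equivalence of the two formulations in the statement.

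It remains to prove $m=d_\lambda$, and I expect this to be the main obstacle. My plan is to use the charge invariant. By \cite[Definition~8.3]{CLP} and the surrounding results (presumably \cite[Theorem~8.x]{CLP}), the connected components of $\mathcal A_\lambda$ are classified by $\operatorname{charge}(T)$ modulo $d_\lambda$: charge mod $d_\lambda$ is invariant under each $D_i$, and tabloids with the same charge mod $d_\lambda$ are connected. If that classification is available, the remaining work is to compute how $\omega$ changes the charge. I would show that
\[
\operatorname{charge}(\omega(T))\equiv\operatorname{charge}(T)+c\pmod{d_\lambda}
\]
with $c$ a unit modulo $d_\lambda$ (I expect $c=\pm1$). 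Then $\omega$ cycles through all $d_\lambda$ residue classes, so the orbit of a component has size exactly $d_\lambda$.

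To compute the change, note that $\omega$ replaces $\ol a$ by $\ol{a+1}$. The relative order of representatives in $[n]$ changes only when $\ol n$ becomes $\ol1$. In each maximal block of equal rows this changes exactly one local charge: the entry moves from being largest to smallest in its row. So $\lch_{r-1}$ increases by one and $\lch_r$ decreases by one, where $r$ is the row containing $\ol n$, with boundary effects at the ends of the block. Summing with the weights $i$ should give a net change of $\pm1$, possibly plus a multiple of the block length. Every block length is a multiple of $d_\lambda$, because the multiplicities are exactly the block lengths, so that extra term vanishes mod $d_\lambda$.

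If the classification by charge is not stated in \cite{CLP}, I would instead cite \cite[Remark~8.8]{CLP}, which already asserts transitivity and presumably records the number of components as $d_\lambda$, and derive $m=d_\lambda$ directly from the orbit–stabilizer argument above.
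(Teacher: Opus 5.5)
Your proposal is correct. It follows the same outline as the paper's proof:
\begin{enumerate}
\item identify the molecules on $V_{\lambda,\rho}$ with the components of $\mathcal A_\lambda$ via Proposition~\ref{ambc-bidirected-graph-prop};
\item classify the components by charge modulo $d_\lambda$, which is \cite[Theorem~8.6]{CLP};
\item show that $\omega$ lowers the charge by one modulo $d_\lambda$.
\end{enumerate}

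The difference lies in how the last step is obtained. The paper moves $T$ to a column-superstandard tabloid $C_a$ in its component, using \cite[Lemma~7.4]{CLP}. It then cites \cite[Lemma~8.5]{CLP} for $\operatorname{charge}(C_{a+1})\equiv\operatorname{charge}(C_a)-1$. You instead compute the effect of $\omega$ on the local charges of an arbitrary $T$. This removes the dependence on Lemma~8.5, and the computation does go through.

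Suppose $\ol n$ lies in row $r$ of the equal-part block $[a,b]$. Then $\lch_{r-1}$ rises by one and $\lch_r$ drops by one, whenever the corresponding adjacent rows have equal length. The charge therefore changes as follows:
\begin{itemize}
\item by $-1$ if $a<r<b$;
\item by $-a$ if $r=a<b$;
\item by $b-1$ if $a<r=b$;
\item by $0$ for a singleton block, where $d_\lambda=1$.
\end{itemize}
The boundary cases differ from $-1$ by $-(a-1)$ and by $b$. These are the total multiplicities of the parts larger than $\lambda_r$, respectively at least $\lambda_r$. So the correction is not merely a multiple of the current block length, as you wrote. It is nonetheless divisible by $d_\lambda$, so the change is $\equiv -1\pmod{d_\lambda}$ in every case.

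Your orbit--stabilizer step is a modest gain over the paper's presentation. It shows directly that the number of components equals the least $m$ with $P\sim_D\omega^m(P)$. It therefore supplies explicitly the fact that every residue of the charge modulo $d_\lambda$ occurs. The paper's sentence ``exactly $d_\lambda$ components'' uses this fact before its $\omega$-computation establishes it. The fallback via \cite[Remark~8.8]{CLP} is unnecessary.
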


\begin{proof}
Proposition~\ref{ambc-bidirected-graph-prop} identifies the simple
bidirected graph on $V_{\lambda,\rho}$ with
$\mathcal A_\lambda$.  Therefore its molecules are naturally
indexed by the connected components of $\mathcal A_\lambda$.

By \cite[Theorem~8.6]{CLP}, two tabloids $T,T'$ of shape
$\lambda$ lie in the same connected component of
$\mathcal A_\lambda$ if and only if
\[
\operatorname{charge}(T)
\equiv
\operatorname{charge}(T')
\pmod{d_\lambda}.
\]
Therefore $\mathcal A_\lambda$ has exactly $d_\lambda$
connected components.

To justify the effect of $\omega$ for an arbitrary tabloid, choose a
column-superstandard tabloid $C_a$ in the component of $T$, which is
possible by \cite[Lemma~7.4]{CLP}.  Since $\omega$ cyclically permutes
the Knuth operators, it is an automorphism of $\mathcal A_\lambda$;
hence $\omega T$ lies in the component of
$\omega C_a=C_{a+1}$.  Theorem~8.6 and Lemma~8.5 of \cite{CLP} now give
\[
\operatorname{charge}(\omega T)
\equiv\operatorname{charge}(C_{a+1})
\equiv\operatorname{charge}(C_a)-1
\equiv\operatorname{charge}(T)-1
\pmod{d_\lambda}.
\]
It follows that $T$ and $\omega^m(T)$ are in the same connected
component if and only if
\[
m\equiv0\pmod{d_\lambda}.
\]
Thus the least positive such $m$ is $d_\lambda$.
\end{proof}

Recall that $a|b$ means $a$ divides $b$ for two integers $a,b$. Moreover, $a|b|c$ means $a|b$ and $b|c$.

\begin{proposition}
For every
$\lambda\in\Pafpf(n)$, one has $2\mid d_\lambda\mid n$. Specifically, we have $d_{(1,1,\cdots,1)}=n$ and $d_{(\frac{n}{2},\frac{n}{2})}=2$.
\end{proposition}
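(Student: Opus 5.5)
The divisibility $d_\lambda\mid n$ is elementary, and I plan to obtain $2\mid d_\lambda$ from the sign argument already used in Proposition~\ref{omega-prop}, combined with Proposition~\ref{molecule-number-prop}. For the first point: every column length $\lambda'_j$ is a multiple of $d_\lambda=\gcd(\lambda'_1,\lambda'_2,\ldots)$, and $n=|\lambda|=\sum_j\lambda'_j$. Hence $d_\lambda\mid n$.

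For evenness, fix $\lambda\in\Pafpf(n)$ and choose $z\in\cF_n$ with $\Phi(z)=(P,P,\rho)$ and $\operatorname{shape}(P)=\lambda$. Let $V_{\lambda,\rho}$ be as in Proposition~\ref{ambc-bidirected-graph-prop}.
\begin{enumerate}
\item By Proposition~\ref{omega-wgraph-prop}, $\Phi(\theta_1(x))=(\omega(P(x)),\omega(P(x)),\rho)$. Hence $\theta_1$ maps $V_{\lambda,\rho}$ bijectively to itself.
\item By Proposition~\ref{ambc-bidirected-graph-prop}, the molecules contained in $V_{\lambda,\rho}$ correspond to the connected components of $\mathcal A_\lambda$. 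Under this correspondence, $\theta_1$ acts on components as $\omega$ does.
\item By the proof of Proposition~\ref{molecule-number-prop}, charge modulo $d_\lambda$ identifies the components with $\ZZ/d_\lambda\ZZ$, and $\omega$ acts on this index as the shift by $-1$. So $\theta_1$ permutes the $d_\lambda$ molecules of $V_{\lambda,\rho}$ as a single $d_\lambda$-cycle.
\item The FPF sign $\sfpf$ is constant on molecules. This holds because nontrivial involutive transformations are conjugations (Theorem~\ref{direct-prop}), so they stay inside one conjugacy class $\cF_n^\pm$; then apply Theorem~\ref{mol-m-thm}.
\item Conjugation by $\omega$ exchanges $\cFp$ and $\cFm$, since $\omega\Theta^+\omega^{-1}=\Theta^-$ and conjugation by $\omega$ normalizes $\tS_n$. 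This is exactly the fact used in the proof of Proposition~\ref{omega-prop}.
\end{enumerate}

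Combining these, the sign function on the cyclically ordered molecules $M,\theta_1(M),\theta_1^2(M),\ldots$ alternates. Returning to $M$ after $d_\lambda$ steps then forces $(-1)^{d_\lambda}=1$, so $d_\lambda$ is even.

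The step needing the most care is the third: that $\theta_1$ really acts on the components by a single $d_\lambda$-cycle. This is the equivalent formulation in Proposition~\ref{molecule-number-prop}, namely that the least $m$ with $P\sim_D\omega^m(P)$ is $d_\lambda$. A possible gap is that the fourth step needs sign-constancy across bidirected edges only. Nontrivial edges are conjugations $y=s_jxs_j$ by Corollary~\ref{invol-cor}, so this holds.

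For the specific values: for $\lambda=(1,\ldots,1)$ the conjugate is $\lambda'=(n)$, so $d_\lambda=n$. For $\lambda=(\tfrac n2,\tfrac n2)$ the conjugate is $\lambda'=(2,\ldots,2)$ with $n/2$ entries, so $d_\lambda=2$. Equivalently, each shape has a single distinct part, with multiplicity $n$ and $2$ respectively.
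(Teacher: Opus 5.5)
Your proposal is correct and follows essentially the same route as the paper: $d_\lambda\mid n$ from $n=\sum_j\lambda'_j$, and evenness from the fact that conjugation by $\omega^{d_\lambda}$ keeps each molecule fixed (Propositions~\ref{molecule-number-prop} and~\ref{ambc-bidirected-graph-prop}), while conjugation by an odd power of $\omega$ swaps $\cFp$ and $\cFm$; the two special values of $d_\lambda$ are then read off from $\lambda'$. Your description of $\theta_1$ cycling through the $d_\lambda$ molecules with alternating FPF signs is just a rephrasing of the paper's direct contradiction between $z$ and $\omega^{d_\lambda}z\omega^{-d_\lambda}$.
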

\begin{proof}
By Proposition~\ref{molecule-number-prop},
\[
d_\lambda
=
\gcd(\lambda'_1,\lambda'_2,\ldots).
\]
Since
\[
n=\sum_j\lambda'_j,
\]
the integer $d_\lambda$ divides $n$. Hence
\[
 d_\lambda\mid n.
\]

We next show that $ d_\lambda$ is even.
Suppose, to the contrary, that
$d_\lambda$ is odd. Choose
$z\in\cF_n$ with
\[
\Phi(z)=(P,P,\rho)
\qquad\text{and}\qquad
\operatorname{shape}(P)=\lambda.
\]
By the definition of $ d_\lambda$, the tabloids
$P$ and $\omega^{d_\lambda}(P)$ lie in the same component of
$\mathcal A_\lambda$.  Proposition~\ref{ambc-bidirected-graph-prop}
and Proposition~\ref{omega_prop} then imply that $z$ and
\[
\omega^{d_\lambda} z\omega^{-d_\lambda}
\]
are connected by a sequence of involutive transformations.

On the other hand, conjugation by $\omega$ interchanges the two
FPF conjugacy classes $\cFp$ and $\cFm$.
Since $m$ is odd, $z$ and $\omega^{d_\lambda} z\omega^{-d_\lambda}$ have opposite
FPF signs. This contradicts the fact that involutive
transformations preserve the FPF conjugacy class. Therefore
\[
2\mid d_\lambda.
\]

Finally, if $\lambda=(1^n)$, then
\[
\lambda'=(n),
\]
so
\[
d_{(1^n)}=n.
\]
If
\[
\lambda=\left(\frac n2,\frac n2\right),
\]
then
\[
\lambda'
=
(\underbrace{2,2,\ldots,2}_{\frac n2\text{ parts}}),
\]
and consequently
\[
d_{(\frac n2,\frac n2)}
=
2.
\]
\end{proof}

\begin{proposition}
\label{minimal-orbit-prop}
Let $\Gamma$ be a molecule of $\Gamma_n^{\mathrm m}$ of
AMBC shape $\lambda\in\Pafpf(n)$,
and let
\[
d_\lambda=\gcd(\lambda'_1,\lambda'_2,\ldots).
\]
Then the set of elements of $\Gamma$ that are minimal with respect to
the FPF Bruhat order $\leq_F$ is invariant under
the map
\[
z\longmapsto \omega^{d_\lambda}z\omega^{-d_\lambda}.
\]
In particular, if $\Gamma$ has a unique minimal element, then
\[
\frac{n}{d_\lambda}\mid\lambda_i
\qquad\text{for every part $\lambda_i$ of $\lambda$.}
\]
\end{proposition}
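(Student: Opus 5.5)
The plan is to show first that $\theta_{d_\lambda}$ maps the molecule $\Gamma$ onto itself, and then to use that it preserves $\leq_F$ to transport minimal elements.

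\textbf{Step 1: $\theta_{d_\lambda}(\Gamma)=\Gamma$.} Let $x\in\Gamma$ with $\Phi(x)=(P,P,\rho)$. By Proposition~\ref{omega-wgraph-prop},
\[
\Phi(\theta_{d_\lambda}(x))=(\omega^{d_\lambda}(P),\omega^{d_\lambda}(P),\rho),
\]
so $\theta_{d_\lambda}(x)\in V_{\lambda,\rho}$. By Proposition~\ref{molecule-number-prop}, $P\sim_D\omega^{d_\lambda}(P)$ in $\mathcal A_\lambda$. Proposition~\ref{ambc-bidirected-graph-prop} identifies the bidirected graph on $V_{\lambda,\rho}$ with $\mathcal A_\lambda$, so $x$ and $\theta_{d_\lambda}(x)$ lie in the same molecule. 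Hence $\theta_{d_\lambda}(\Gamma)\subseteq\Gamma$. Since $\theta_{d_\lambda}$ is a bijection of $\cF_n$ mapping molecules to molecules (it is a twisted automorphism of $\Gamma_n^{\m}$), we get $\theta_{d_\lambda}(\Gamma)=\Gamma$.

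\textbf{Step 2: minimal elements.} Conjugation by $\omega^k$ preserves $\ellfpf$ and sends reflections to reflections, so it preserves $\leq_F$; this is already noted in the proof of Proposition~\ref{omega-wgraph-prop}. Its inverse $\theta_{-d_\lambda}$ also preserves $\leq_F$, so $\theta_{d_\lambda}$ restricts to an order automorphism of $(\Gamma,\leq_F)$. Order automorphisms permute minimal elements, which proves the first assertion.

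\textbf{Step 3: the divisibility claim.} Suppose $z$ is the unique minimal element of $\Gamma$. Then $\theta_{d_\lambda}(z)=z$, so with $\Phi(z)=(P,P,\rho)$, injectivity of $\Phi$ gives $\omega^{d_\lambda}(P)=P$. Thus the cyclic shift by $d_\lambda$ on residues mod $n$ preserves every row set $P_i$. Each row is therefore a union of orbits of $a\mapsto a+d_\lambda$ on $\ZZ/n\ZZ$. Since $d_\lambda\mid n$ (previous proposition), every such orbit has size $n/d_\lambda$. Hence $\frac{n}{d_\lambda}\mid|P_i|=\lambda_i$ for every $i$.

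\textbf{Main obstacle.} The only delicate point is Step 1's claim that $\theta_{d_\lambda}$ keeps $x$ inside the same molecule rather than a different one with the same shape and weight. This rests entirely on the equivalence in Proposition~\ref{molecule-number-prop} and on the fact that the weight $\rho$ is unchanged by $\theta_k$, so no extra argument is needed beyond citing these results.
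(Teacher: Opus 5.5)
Your proposal is correct and follows essentially the same route as the paper: Proposition~\ref{molecule-number-prop} shows that conjugation by $\omega^{d_\lambda}$ preserves the molecule, this diagram-automorphism conjugation preserves $\leq_F$ and so permutes the minimal elements, and a unique minimal element forces $\omega^{d_\lambda}(P)=P$, so every row is a union of orbits of size $n/d_\lambda$. The differences are only in detail: your Step~1 spells out, via Propositions~\ref{omega-wgraph-prop} and~\ref{ambc-bidirected-graph-prop}, the molecule-preservation that the paper cites in one line, and you state explicitly the fact $d_\lambda\mid n$, which the paper uses without comment.
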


\begin{proof}
By Proposition~\ref{molecule-number-prop}, conjugation by
$\omega^{d_\lambda}$ preserves every molecule of AMBC shape
$\lambda$. This conjugation is induced by an automorphism of the
affine Coxeter diagram, and therefore preserves the FPF Bruhat
order. Hence it permutes the minimal elements of $\Gamma$.

Suppose that $\Gamma$ has a unique minimal element $z$, and
write
\[
\Phi(z)=(P,P,\rho).
\]
The uniqueness of $z$ implies
\[
\omega^{d_\lambda}z\omega^{-d_\lambda}=z.
\]
By Proposition~\ref{omega_prop} and the injectivity of $\Phi$, it follows
that
\[
\omega^{d_\lambda}(P)=P.
\]

The permutation of $\mathbb Z/n\mathbb Z$ given by
$x\mapsto x+d_\lambda$ has orbits of size
\[
\frac{n}{d_\lambda}.
\]
Since every row of $P$ must be invariant under this
permutation, every row is a union of such orbits. Consequently,
\[
\frac{n}{d_\lambda}\mid |P_i|=\lambda_i
\]
for every row $P_i$.
\end{proof}

\begin{remark}
The preceding divisibility condition need not hold for every $\lambda\in\Pafpf(n)$. For example, if
\[
\lambda=(2,2,1,1)\vdash6,
\]
then $\lambda'=(4,2)$ and $d_\lambda=2$, while
$n/d_\lambda=3$ divides none of the parts of $\lambda$.
Therefore no molecule of this shape has a unique minimal
element. 

A direct computation using Theorem~\ref{direct-prop} shows that
the molecule containing
\[
z=[3,6,1,5,4,2]
\]
has exactly three minimal elements, namely $[0,-1,4,3,8,7],[2,1,6,5,4,3],[4,3,2,1,6,5]$.
\end{remark}

\section{A row-Beissinger reconstruction of AMBC data}\label{sec:affine-rb}

In this section we reconstruct the AMBC data of an affine
fixed-point-free involution from its labelled complete-cycle row
Beissinger truncations.  Stable PNAP row profiles derived from the
truncations recover the common AMBC tabloid.  Inverse finite row
Beissinger insertion followed by periodic completion recovers the
affine involution itself, and its channel distances recover the AMBC
weight.  The weight reconstruction uses the full labelled tableau and
affine channel data; it is not a formula in the horizontal positions of
the finite tableaux.

Throughout this section, let $n$ be even and let $x\in\cF_n$.  We write
\[
\Phi(x)=(P,P,\rho),
\qquad
\lambda=\operatorname{shape}(P)
      =(\lambda_1,\ldots,\lambda_{\ell}).
\]

A \defn{partially standard tableau} is a Young diagram filled with
distinct integers whose entries increase from left to right in every
row and from top to bottom in every column.  When its entry set is
$[m]$, it is a standard Young tableau.  We use ordinary Schensted row
insertion.  For a finite permutation $z\in S_m$, the notation
$\PRSK(z)$ and $\QRSK(z)$ denotes the insertion and recording tableaux
in the usual Robinson--Schensted correspondence.  If a finite
permutation is carried by another linearly ordered set, we first
standardize its labels by the unique order-preserving bijection with
$[m]$ and then pull the resulting tableaux back entrywise.

For a tableau or tabloid $T$ and an entry $a$ of $T$, we write $\row_T(a)$ for the index of the row containing $a$, where rows are numbered
from top to bottom starting with $1$.  If $T$ is a tableau, we similarly write $\col_T(a)$ for the index of the column containing $a$, where columns are
numbered from left to right starting with $1$.

\subsection{Complete two-cycle truncations}

We first recall the finite row Beissinger correspondence.  If $T$ is a
partially standard tableau and $a\leq b$, let
\[
T\xleftarrow{\rB}(a,b)
\]
be the tableau obtained as follows.  If $a<b$, Schensted row-insert $a$
into $T$; if the new box lies in row $r$, then append $b$ to row $r+1$.
If $a=b$, append $a$ to the first row.  Given a finite involution $z$, list
its pairs
\[
(a_1,b_1),\ldots,(a_q,b_q),
\qquad
 a_s\leq b_s=z(a_s),
\qquad
 b_1<\cdots<b_q,
\]
and define
\[
\PrB(z)
=
\varnothing
\xleftarrow{\rB}(a_1,b_1)
\xleftarrow{\rB}\cdots
\xleftarrow{\rB}(a_q,b_q).
\]

\begin{theorem}[{\cite[Theorem 3.1]{Beissinger}}]
\label{finite-rb-thm}
If $z$ is a finite involution, then
\[
\PrB(z)=\PRSK(z)=\QRSK(z).
\]
\end{theorem}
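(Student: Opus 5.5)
Since this is Beissinger's theorem, cited as \cite[Theorem 3.1]{Beissinger}, the plan is to reconstruct its standard proof by induction on the number $q$ of two-cycles and fixed points of $z$, processed in increasing order of the larger entry $b_s$. The equality $\PRSK(z)=\QRSK(z)$ for an involution follows from the symmetry of Robinson--Schensted: $\PRSK(z^{-1})=\QRSK(z)$ (Schützenberger's theorem, or Viennot's shadow-line/matrix-ball argument). So the real content is $\PrB(z)=\PRSK(z)$.

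For the induction, let $b=b_q$ be the largest entry of the support. Set $z'$ to be the restriction of $z$ to the support minus $\{a_q,b_q\}$, standardized as in the paper's convention. By induction, $\PrB(z')=\PRSK(z')=\QRSK(z')=:T'$. We then compare $\PRSK(z)$ with $T'$.

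If $a=b$, then $b$ is a fixed point and the maximal value. Its row insertion in the word of $z$ happens at position $b$, the last position, and appends $b$ to row one, because $b$ exceeds every entry. Every other letter is unchanged, so $\PRSK(z)$ is $T'$ with $b$ appended to the first row. This matches the Beissinger step.

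If $a<b$, then $z(a)=b$ is the maximal value and $z(b)=a$. Removing the maximal value $b$ from the word of $z$ does not change the bumping of other letters, up to where $b$ sits. Concretely, $\PRSK$ of the word with the value $b$ deleted equals $\PRSK(z)$ with $b$ deleted; this is standard, since the largest letter never bumps anything and is only bumped down rows. So we track the word $w=z(1)\cdots z(b)$, whose last letter is $a$ and whose letter $b$ sits at position $a$. Deleting the letter $b$ and the last letter $a$ gives $z'$, whose insertion tableau is $T'$.

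The key step, and the main obstacle, is to show how the path of $b$ evolves. Inserting $z(1)\cdots z(b-1)$ gives $T'$ with $b$ placed somewhere, say at the end of row $r$. We want to compute where $b$ lands after the final insertion of $a$. Here the symmetry $P=Q$ is used: by induction applied to the involution restricted to $[b-1]$ with $a$ treated as a fixed point, $\PRSK(z(1)\cdots z(b-1))$ is obtained from $T'$ by the $Q$-side picture. Using $Q=P$ for $z|_{[b-1]}$ with $a$ fixed, the entry $b$ sits exactly where $a$ would be recorded. I would then argue the following. Inserting $a$ into the tableau bumps along a path, and the box $b$ occupies is exactly where the bumping route of $a$ ends in row $r$. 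The final insertion therefore bumps $b$ from row $r$ to the end of row $r+1$, while $a$ settles into $T'$ by ordinary Schensted insertion in the new box of row $r$. The result is exactly $T'\xleftarrow{\rB}(a,b)$.

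Making this precise requires a careful lemma. It says that in $\PRSK(z(1)\cdots z(b-1))$ the value $b$ is the last box of the row where Schensted inserting $a$ into $T'$ terminates. I expect to prove it via Fomin growth diagrams, or the local rules, on the $b\times b$ permutation matrix. Symmetry of the growth diagram for an involution identifies the shape increments along the last row and column, and this forces the row indices to match. This lemma is where all the difficulty sits; the remaining steps are bookkeeping.
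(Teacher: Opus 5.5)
The paper gives no proof of this statement; it is quoted from Beissinger. So the question is only whether your reconstruction stands on its own, and it does. Your outline is the standard argument, and your key lemma is exactly what is needed: peel off the cycle $(a,b)$ containing the maximum, and note that deleting the letter $b$ from the word of $z$ leaves the word of $z'$ followed by $a$. The lemma is that $b$ sits at the end of the row $r$ where the Schensted insertion $T'\leftarrow a$ terminates.

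First, delete the heuristic paragraph appealing to ``induction applied to the involution restricted to $[b-1]$ with $a$ treated as a fixed point''; it is false as stated. For $z=[3,2,1]$ one has $a=1$ and $b=3$, and that auxiliary involution is the identity of $[2]$, whose tableau has $a$ in row $1$. But $b=3$ lies in row $2$ of the insertion tableau of the word $3\,2$.

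What you need is the symmetry of $z$ itself, and with it your lemma is immediate rather than the main difficulty. Let $\lambda(i,j)$ be the RSK shape of the partial permutation $\{(p,z(p)):p\leq i,\ z(p)\leq j\}$. Then:
\begin{itemize}
\item $\lambda(b-1,b)$ is the shape of the insertion tableau of $z(1)\cdots z(b-1)$;
\item $\lambda(b,b-1)$ is the shape of $T'\leftarrow a$;
\item both contain $\lambda(b-1,b-1)=\operatorname{shape}(T')$;
\item $\lambda(b-1,b)=\lambda(b,b-1)$ because $z=z^{-1}$.
\end{itemize}
Hence the box of $b$ is exactly the box created by $T'\leftarrow a$.

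Equivalently, without growth diagrams: suppose $b$ sat in a row $r'\neq r$. Then the final insertion of $a$ would follow its route in $T'$ and create the new box in row $r$. So $b$ would lie in row $r$ of $\QRSK(z)$ but in row $r'$ of $\PRSK(z)$, contradicting $\PRSK(z)=\QRSK(z)$.

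Second, a wording fix: it is the bumping route of $a$, not $a$ itself, that ends in row $r$. The letter reaching row $r$ exceeds every entry of row $r$ of $T'$, so it bumps $b$, which is then appended to row $r+1$. Since $b$ is maximal, it cannot alter the route in the rows above $r$.
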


The two-cycles of $x$ form an $n$-periodic perfect matching on $\ZZ$.
Translation by $n$ acts freely on the set of these two-cycles.  Each
translation orbit has a unique representative $(a,b)$ satisfying
\[
a<b=x(a)
\qquad\text{and}\qquad
1\leq b\leq n.
\]
Since $x$ is fixed-point-free, there are exactly $n/2$ such orbits.

\begin{definition}[Complete two-cycle truncation]
\label{complete-cycle-def}
Let
\[
\cE(x)=\{(a_1,b_1),\ldots,(a_{n/2},b_{n/2})\}
\]
be the representatives above, indexed so that
\[
b_1<b_2<\cdots<b_{n/2}.
\]
For $N\in\NN$, set
\[
\cE_N(x)
=
\left\{
(a_s+kn,b_s+kn):
1\leq s\leq \frac n2,\ -N\leq k\leq N
\right\}
\]
and
\[
S_N(x)=
\bigcup_{(a,b)\in\cE_N(x)}\{a,b\}.
\]
The set $\cE_N(x)$ defines a finite fixed-point-free involution
$x_N$ on $S_N(x)$ by
\[
x_N(a)=b
\qquad\text{and}\qquad
x_N(b)=a
\qquad ((a,b)\in\cE_N(x)).
\]
Let
\[
\sigma_N:S_N(x)\longrightarrow[(2N+1)n]
\]
be the unique order-preserving bijection and put
\[
\widetilde{x}_N
=
\sigma_N\circ x_N\circ\sigma_N^{-1}.
\]
The tableau
\[
\BrB_N(x)
:=
\sigma_N^{-1}\bigl(\PrB(\widetilde{x}_N)\bigr),
\]
where $\sigma_N^{-1}$ is applied entrywise, is called the
\defn{$N$-th complete two-cycle row Beissinger truncation} of $x$.
\end{definition}

\begin{example}[A complete two-cycle truncation]
Consider
\[
x=[0,3,2,5]\in\cF_4 .
\]
For $\(N=1\)$, the complete-cycle truncation contains the cycles
\[
(-4,-1),(-2,1),(0,3),(2,5),(4,7),(6,9).
\]
After standardization the associated finite involution is
\[
\widetilde{x}_1=[3,5,1,7,2,9,4,11,6,12,8,10].
\]
Finite row Beissinger insertion gives
\[
P_{\rB}(\widetilde{x}_1)
=
\ytabb{1&2&4&6&8&10\\
3&5&7&9&11&12}.
\]
Pulling back the labels gives
\[
B^{\rB}_1(x)
=
\ytabb{-4&-2&0&2&4&6\\
-1&1&3&5&7&9}.
\]
\end{example}

\begin{proposition}
\label{rb-rsk-trunc-prop}
For every $N\in\NN$, one has
\[
\BrB_N(x)
=
\sigma_N^{-1}\bigl(\PRSK(\widetilde{x}_N)\bigr)
=
\sigma_N^{-1}\bigl(\QRSK(\widetilde{x}_N)\bigr).
\]
\end{proposition}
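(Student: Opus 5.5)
This is a direct consequence of Beissinger's theorem. The plan is to check that its hypotheses hold for the standardized finite involution $\widetilde{x}_N$ and then transport the resulting equality of tableaux back through $\sigma_N^{-1}$.

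First I verify that $\widetilde{x}_N$ is a genuine finite involution of $[(2N+1)n]$. The set $\cE_N(x)$ consists of $(2N+1)\cdot\frac n2$ pairs. Distinct two-cycles of the periodic matching $x$ are disjoint, so these pairs are pairwise disjoint. Hence $S_N(x)$ has exactly $(2N+1)n$ elements, and $x_N$ is a well-defined fixed-point-free involution of $S_N(x)$. Conjugating by the order-preserving bijection $\sigma_N$ gives $\widetilde{x}_N\in S_{(2N+1)n}$ with $\widetilde{x}_N^2=1$.

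Next I apply Theorem~\ref{finite-rb-thm} to $\widetilde{x}_N$. This gives $\PrB(\widetilde{x}_N)=\PRSK(\widetilde{x}_N)=\QRSK(\widetilde{x}_N)$ as standard Young tableaux with entry set $[(2N+1)n]$. Applying $\sigma_N^{-1}$ entrywise to all three sides, and using the definition $\BrB_N(x)=\sigma_N^{-1}(\PrB(\widetilde{x}_N))$, yields the claimed chain of equalities.

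The only point requiring care is a consistency remark. The paper's convention for permutations carried by an ordered set is to standardize and then pull back. So one might instead read $\PRSK$ and $\QRSK$ as applied directly to $x_N$ on $S_N(x)$. Both conventions give the same tableaux, because Schensted insertion depends only on relative order and $\sigma_N$ is order-preserving. Likewise, $\PrB$ depends only on relative order: the pairs are listed by increasing larger entry, and insertion involves only comparisons. Hence pulling back through $\sigma_N^{-1}$ commutes with all three constructions. I expect no real obstacle here.
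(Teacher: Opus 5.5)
Your proposal is correct and matches the paper's proof, which observes that $\widetilde{x}_N$ is a finite involution, applies Theorem~\ref{finite-rb-thm}, and pulls the tableaux back entrywise through $\sigma_N^{-1}$. Your two additions are harmless elaborations of steps the paper leaves implicit: the check that the selected pairs are disjoint, so $\widetilde{x}_N$ is a genuine involution of $[(2N+1)n]$, and the remark that all three insertion procedures commute with order-preserving relabelling.
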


\begin{proof}
The permutation $\widetilde{x}_N$ is a finite involution, so the result
follows immediately from Theorem~\ref{finite-rb-thm}.
\end{proof}

\subsection{Stable PNAP sequences}

We use the stabilization framework of \cite[Section~12]{CPY}.  A
\defn{partial non-affine permutation}, abbreviated PNAP, is a finite
sequence of distinct integers and empty symbols.  It may be represented by
its balls in a finite matrix.  Its inverse is the PNAP whose ball set is
obtained by reflecting those balls in the main diagonal, with empty
symbols inserted at unused domain positions.  A sequence
$(f_k)_{k\geq0}$ of PNAPs is
\defn{stable} if, for all sufficiently large $k$, the balls added in
passing from $f_k$ to $f_{k+1}$ are the $(n,n)$-translates of the balls
added in passing from $f_{k-1}$ to $f_k$.  It is \defn{uniformly bounded}
if the diagonals containing its balls lie in a fixed bounded interval.

Two sequences of PNAPs of the same lengths are \defn{asymptotically alike}
if they agree away from initial and terminal intervals of uniformly
bounded length.  Finally, a sequence $(f_k)$ \defn{$P$-stabilizes in row
$r$ to $A\subseteq[\ol n]$} if the multiset of residues in row $r$ of
$\PRSK(f_k)$ grows eventually by precisely the set $A$ at every step.
We use the following results.

\begin{proposition}[PNAP stabilization]
\label{pnap-stability-prop}
The following statements hold.
\begin{enumerate}
\item Every uniformly bounded stable sequence of PNAPs $P$-stabilizes in
      every row.
\item If two uniformly bounded sequences are asymptotically alike and
      $P$-stabilize in a fixed row, then their stabilizing subsets of
      $[\ol n]$ in that row are equal.
\item For an affine permutation $w$, the one-sided truncations
      \[
      (w(1),w(2),\ldots,w(kn))
      \]
      $P$-stabilize to the AMBC insertion tabloid $P(w)$.
\end{enumerate}
\end{proposition}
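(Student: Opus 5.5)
The three items are essentially the stabilization results of \cite[Section~12]{CPY}. The plan is to reduce each to the corresponding statement there, checking only that the definitions used here (stability, uniform boundedness, asymptotic likeness, row $P$-stabilization) match those of \cite{CPY}.

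For item~(1), I would argue by Schensted row insertion on the ball diagram. Uniform boundedness means only boundedly many diagonals carry balls. From some index on, each step adds the $(n,n)$-translate of the previous block of balls. Inserting a translated block into a tableau whose relevant portion is itself a translate, up to a bounded defect, produces translated bumping paths. The defect is controlled by the bounded diagonal width, since an inserted value can only interact with entries within a bounded window of values. Hence, after a transient period, the set of residues that each new block contributes to row $r$ is the same at every step. This is the content of the stabilization lemma of \cite[Section~12]{CPY}, and I would cite it after matching its hypotheses.

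For item~(2), asymptotically alike sequences differ only on initial and terminal segments of bounded length. A change in a bounded initial segment alters the insertion tableau by boundedly many bumping chains. A terminal change alters only boundedly many final insertions. Both effects change row $r$ by a bounded multiset. If the two sequences grew in row $r$ by different residue sets $A\neq A'$, the difference in row $r$ would grow linearly in $k$, which contradicts boundedness. Therefore $A=A'$.

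Item~(3) is the comparison of the one-sided truncation with AMBC, namely the theorem in \cite[Section~12]{CPY} stating that forward AMBC's tabloid $P(w)$ is the stable row profile of $\PRSK(w(1),\ldots,w(kn))$. The truncation sequence is stable by $n$-periodicity of $w$. It is uniformly bounded because $|w(i)-i|$ is bounded. So (1) applies, and the remaining work is to identify the limit with $P(w)$, which I would quote rather than reprove.

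The main obstacle is item~(1)'s claim that the bounded defect is eventually periodic rather than merely bounded. This requires a finite-state argument: the relevant window of the tableau takes finitely many shapes up to translation, so the insertion dynamics on it is eventually periodic, and stability of the input forces the period to be one step. I would try this pigeonhole argument first, and fall back on citing \cite{CPY} directly.
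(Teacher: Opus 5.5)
Your proposal is correct and matches the paper, whose proof consists only of the citations \cite[Lemma~12.4, Corollary~12.9, and Theorem~7.3]{CPY} for items (1)--(3) respectively. Note that the paper cites Theorem~7.3 for (3), not a Section~12 result. Your self-contained sketches are optional, and the pigeonhole fallback for (1) would not suffice on its own: a finite-state system driven by a constant input is only eventually periodic with some period $p$, nothing in your sketch forces $p=1$, and ``grows by precisely $A$ at every step'' requires exactly that, so for (1) the citation is what carries the argument.
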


\begin{proof}
These are \cite[Lemma~12.4, Corollary~12.9, and
Theorem~7.3]{CPY}, respectively.
\end{proof}

For later use, define the displacement bound
\[
D_x:=\max_{1\leq i\leq n}|x(i)-i|.
\]
Periodicity implies that $|x(j)-j|\leq D_x$ for every $j\in\ZZ$.

Fix $j\in[n]$.  Translate every label in the $N$-th complete-cycle
truncation by $Nn$, and denote the resulting finite involution by $y_N$.
Thus the selected cycles of $y_N$ are indexed by the consecutive periods
$0,1,\ldots,2N$.  Let $g_N^{(j)}$ be the restriction of $y_N$ to domain
labels at most $j+Nn$, regarded as a PNAP after inserting empty symbols at
missing domain positions, and put
\[
h_N^{(j)}:=\bigl(g_N^{(j)}\bigr)^{-1}.
\]

\begin{lemma}[Comparison with one-sided truncations]
\label{complete-cycle-pnap-lem}
For each $j\in[n]$, the sequence
\[
\bigl(h_N^{(j)}\bigr)_{N\geq0}
\]
is uniformly bounded and stable.  After adding a uniformly bounded number
of empty symbols at its two ends, it is asymptotically alike to the usual
one-sided truncations of $x^{-1}$.
\end{lemma}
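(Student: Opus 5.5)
We make the objects explicit and then check the three properties (bounded, stable, asymptotically alike) directly from the periodic structure of $x$. After translating by $Nn$, the domain of $y_N$ is the set of endpoints of the cycles $(a_s+kn,b_s+kn)$ with $0\le k\le 2N$. Because $1\le b_s\le n$ and $|a_s-b_s|\le D_x$, this set contains the whole interval $[D_x+1,\,2Nn]$ and lies inside $[1-D_x,\,(2N+1)n]$. On that domain $y_N$ agrees with $x$. Hence $g_N^{(j)}$ is the restriction of $x$ to a finite set $U_N^{(j)}$, namely the endpoints in the domain that are at most $j+Nn$. Its inverse $h_N^{(j)}$ has ball set $\{(x(u),u):u\in U_N^{(j)}\}$. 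Since $x$ is an involution, this equals $\{(v,x(v)):v\in x(U_N^{(j)})\}$, so $h_N^{(j)}$ is again a restriction of $x=x^{-1}$, now to the set of values $x(U_N^{(j)})$.

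\emph{Uniform boundedness.} Every ball $(i,x(i))$ satisfies $|x(i)-i|\le D_x$. The diagonals therefore lie in $[-D_x,D_x]$ after accounting for the index shift of the PNAP positions, and any such shift is a fixed constant independent of $N$.

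\emph{Stability.} Passing from $N$ to $N+1$ adds one full period of cycles on the right, $k=2N+1,2N+2$, and raises the cutoff from $j+Nn$ to $j+(N+1)n$. The lower end of the domain, near $1-D_x$, does not move, and no labels are lost. The set $U_{N+1}^{(j)}\setminus U_N^{(j)}$ consists of the endpoints in the window $(j+Nn,\,j+(N+1)n]$ that belong to the domain. For $N$ large this window sits well inside $[D_x+1,2Nn]$, so it is exactly the full window. The added balls are then the $x$-images of one period, and by periodicity of $x$ these are the $(n,n)$-translates of the previous step's additions. The same holds after inversion.

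\emph{Asymptotic alikeness.} The one-sided truncations of $x^{-1}=x$ use the domain $[1,kn]$ with $k=N+1$, say, suitably aligned with $j$. Our $h_N^{(j)}$ uses the domain $x(U_N^{(j)})$. This differs from an interval only within distance $D_x$ of its two endpoints. Near the left end, $U_N^{(j)}$ begins at $1-D_x$ and may miss or include a few labels. Near the right end, the cutoff at $j+Nn$ under $x$ produces a set that is ragged within $D_x$ of $j+Nn$. After padding with at most $2D_x+n$ empty symbols at each end to align the index ranges, the two PNAPs agree on the middle portion. This matches the definition of asymptotically alike with initial and terminal intervals of bounded length.

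The main obstacle is bookkeeping: fixing the precise convention for how a PNAP's positions relate to its labels, so that the "empty symbols inserted at missing domain positions" in $g_N^{(j)}$ and in its inverse line up with the padding needed for the comparison. I would first normalize all PNAPs to be indexed by their actual integer labels in a fixed window $[1-D_x-n,\,j+Nn+D_x]$. With that indexing, both boundedness and alikeness reduce to the single estimate $|x(i)-i|\le D_x$.
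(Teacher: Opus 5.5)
Your proposal is correct and follows essentially the same route as the paper. The displacement bound $|x(i)-i|\le D_x$ gives uniform boundedness, and periodicity makes each step's new balls the $(n,n)$-translates of the previous step's; the paper checks this for $g_N^{(j)}$ and transfers it through the diagonal reflection, whereas you use $x=x^{-1}$ to view $h_N^{(j)}$ directly as a restriction of $x$. Agreement with the one-sided truncations away from $D_x$-neighbourhoods of the two ends then gives asymptotic alikeness.

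One small slip: the domain of $y_N$ is only guaranteed to contain $[1,(2N+1)n-D_x]$, not $[D_x+1,2Nn]$. The latter fails when $D_x>n$; for example, $x=[-4,3,2,9]$ with $N=1$ has $8$ missing from the domain. The window $(j+Nn,\,j+(N+1)n]$ still lies inside the corrected interval once $Nn\ge j+D_x$, so your stability argument goes through unchanged.
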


\begin{proof}
Uniform boundedness follows from
\[
|x(a)-a|\leq D_x.
\]
When $N$ is increased by one, the upper bound on the domain of
$g_N^{(j)}$ changes from $j+Nn$ to $j+(N+1)n$.  Away from a bounded
neighborhood of the lower endpoint, the new balls are exactly the
$(n,n)$-translates of the balls added at the previous step.  Hence
$(g_N^{(j)})_N$ is stable.  Reflection in the main diagonal commutes with
translation by $(n,n)$, so the inverse sequence $(h_N^{(j)})_N$ is stable
as well.

The balls of $h_N^{(j)}$ and those of a one-sided truncation of $x^{-1}$
coincide whenever both coordinates are farther than $D_x$ from the two
boundary lines.  Consequently, after padding by at most a constant number
of empty symbols, independent of $N$, the two PNAPs agree outside initial
and terminal intervals of bounded length.  Thus the two sequences are
asymptotically alike.
\end{proof}

\begin{example}[Computing the PNAP sequence $h_N^{(j)}$]
\label{ex:pnap-computation}

Consider again the affine fixed-point-free involution
\[
x=[0,3,2,5]\in\cF_4 .
\]

For $N=1$, the complete two-cycle truncation is given by
\[
\cE_1(x)
=
\{(-4,-1),(-2,1),(0,3),(2,5),(4,7),(6,9)\}.
\]

Following the definition of Lemma~\ref{complete-cycle-pnap-lem},
we translate all labels by $4$ and obtain the finite involution
$y_1$ with two-cycles
\[
(0,3),(2,5),(4,7),(6,9),(8,11),(10,13).
\]

Take $j=1$.  The restriction $g_1^{(1)}$ keeps only the balls whose
first coordinates satisfy
\[
a\leq j+Nn=5 .
\]
Therefore
\[
B(g_1^{(1)})
=
\{(0,3),(2,5),(4,7)\}.
\]

Reflecting these balls in the main diagonal gives
\[
B(h_1^{(1)})
=
\{(3,0),(5,2),(7,4)\}.
\]

Equivalently, after inserting empty positions at the missing domain
coordinates, the PNAP $h_1^{(1)}$ is represented by the sequence
\[
h_1^{(1)}
=
[\,\ast,\ast,\ast,0,\ast,2,\ast,4\,].
\]

Its RSK insertion tableau is therefore obtained from the finite
sequence
\[
(0,2,4),
\]
and the row residues are
\[
\{\ol{1},\ol{3}\},\qquad
\{\ol{2},\ol{4}\}
\]
after identifying labels modulo $4$.

Increasing $N$ only adds $(4,4)$-translates of the same balls away from
the boundary. Hence the same row residues persist, illustrating the
stabilization mechanism in Theorem~\ref{rb-ambc-tabloid-thm}.
\end{example}

\subsection{Stable row profiles}

\begin{theorem}[Stable PNAP profiles recover the AMBC tabloid]
\label{rb-ambc-tabloid-thm}
For every $j\in[n]$ and every row $r$, the sequence
$\bigl(h_N^{(j)}\bigr)_{N\geq0}$ $P$-stabilizes in row $r$ to the
residue set $P_r$.  In particular, its stabilizing tabloid is
independent of $j$ and equals $P$.
\end{theorem}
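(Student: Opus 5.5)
The proof chains the three parts of Proposition~\ref{pnap-stability-prop} through Lemma~\ref{complete-cycle-pnap-lem}.

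\emph{Step 1.} Fix $j\in[n]$. By Lemma~\ref{complete-cycle-pnap-lem}, the sequence $(h_N^{(j)})_{N\geq0}$ is uniformly bounded and stable. Part~(1) of Proposition~\ref{pnap-stability-prop} then says it $P$-stabilizes in every row $r$, to some residue set $A_r^{(j)}\subseteq[\ol n]$.

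\emph{Step 2.} Compare with the one-sided truncations $f_k=(x^{-1}(1),\ldots,x^{-1}(kn))$. These are uniformly bounded because $|x^{-1}(i)-i|=|x(i)-i|\leq D_x$. They are stable because consecutive truncations differ by $(n,n)$-translates, by periodicity. By part~(3), they $P$-stabilize to $P(x^{-1})$.

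Since $x=x^{-1}$, we have $P(x^{-1})=P(x)=P$, where the last equality uses $\Phi(x)=(P,P,\rho)$. Lemma~\ref{complete-cycle-pnap-lem} says the padded sequence $(h_N^{(j)})$ is asymptotically alike to $(f_k)$. Part~(2) then gives $A_r^{(j)}=P_r$ for every row $r$.

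\emph{Step 3.} The right-hand side $P_r$ does not involve $j$. Hence the stabilizing tabloid is independent of $j$ and equals $P$.

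\emph{Main obstacle.} The delicate point is bookkeeping in Step~2, in three places.

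First, the two sequences must be indexed compatibly. The comparison should pair $h_N^{(j)}$ with the one-sided truncation of length roughly $j+Nn$ plus the shift $Nn$, and part~(2) must be applied to sequences of matching lengths. I would handle this by passing to the subsequence $k=2N+1$ (or similar), after the bounded padding supplied by the lemma.

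Second, the empty symbols must not affect the residues counted in the stabilizing rows. Padding by empty symbols at bounded ends does not change Schensted insertion of the actual entries, except through the bounded boundary region. Part~(2) is designed to absorb exactly this kind of boundary discrepancy.

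Third, the residues must be labelled correctly. The entries of $h_N^{(j)}$ are values of $y_N$, which are translates by $Nn$ of values of $x$. Translation by $Nn$ preserves residues modulo $n$, so the residue sets match those of $x^{-1}$'s values.

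I would check this carefully, and I would confirm it on the worked example with $x=[0,3,2,5]$, whose rows $\{\ol1,\ol3\},\{\ol2,\ol4\}$ should agree with $P$.
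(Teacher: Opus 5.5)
Your proposal is correct and follows the paper's proof: Lemma~\ref{complete-cycle-pnap-lem} combined with parts (1)--(3) of Proposition~\ref{pnap-stability-prop}. The only difference is that you obtain $P(x^{-1})=P$ directly from $x=x^{-1}$, whereas the paper uses the AMBC inverse formula $P(x^{-1})=Q(x)=P$; both are valid. One small correction to your indexing remark: $h_N^{(j)}$ only involves translated labels up to about $j+Nn+D_x$, so it should be paired with the one-sided truncation for $k=N+1$ (or $k=N$) after bounded padding, not $k=2N+1$, which would leave a discrepancy of about $Nn$ positions and destroy asymptotic likeness.
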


\begin{proof}
Fix $j\in[n]$.  Lemma~\ref{complete-cycle-pnap-lem} shows that
$\bigl(h_N^{(j)}\bigr)_N$ is a uniformly bounded stable sequence and
is asymptotically alike, after uniformly bounded padding, to the usual
one-sided truncations of $x^{-1}$.  Parts (1) and (2) of
Proposition~\ref{pnap-stability-prop} therefore show that the two
sequences have the same stabilizing residue set in each row.  By part
(3), the one-sided truncations stabilize to $P(x^{-1})$.  The inverse
formula for AMBC interchanges the two tabloid coordinates, so
$P(x^{-1})=Q(x)=P$.  Hence the stabilizing set in row $r$ is $P_r$.
\end{proof}

\begin{example}[Stable row profiles]
\label{ex:stable-row-profile}

Consider the affine fixed-point-free involution
\[
x=[0,3,2,5]\in\cF_4 .
\]
Its AMBC datum is
\[
\Phi(x)=(P,P,\rho),
\]
where
\[
P=
\ytab{
\ol{1}&\ol{3}\\
\ol{2}&\ol{4}
}
\qquad\text{and}\qquad
\rho=(0,0).
\]

We illustrate the stabilization in Theorem~\ref{rb-ambc-tabloid-thm}.
For $j=1$, the PNAP sequence
\[
\bigl(h_N^{(1)}\bigr)_{N\geq 0}
\]
has derived insertion tableaux whose row residue sets stabilize as
\[
\begin{array}{c|c}
N & \text{row residue sets of }P_{\mathsf{RSK}}(h_N^{(1)})
\\ \hline
0&
(\{\ol{1}\},\{\ol{2}\})
\\[1mm]
1&
(\{\ol{1},\ol{3}\},\{\ol{2},\ol{4}\})
\\[1mm]
2&
(\{\ol{1},\ol{3}\},\{\ol{2},\ol{4}\})
\\
\vdots&\vdots
\end{array}
\]

Hence the stable row profile is
\[
\left(
\{\ol{1},\ol{3}\},
\{\ol{2},\ol{4}\}
\right),
\]
and therefore
\[
P^{\mathrm{aff}}_{\rB}(x)
=
\ytab{
\ol{1}&\ol{3}\\
\ol{2}&\ol{4}
}
=P .
\]

This example illustrates that the stabilization concerns the residue sets
appearing in rows, rather than the eventual position of each individual
label.
\end{example}

\begin{definition}[Affine row Beissinger profile]
\label{affine-rb-tabloid-def}
Fix any $j\in[n]$.  Define $P^{\mathrm{aff}}_{\rB}(x)$ to be the
tabloid whose $r$-th row is the residue set to which
$\bigl(h_N^{(j)}\bigr)_N$ $P$-stabilizes in row $r$.
Theorem~\ref{rb-ambc-tabloid-thm} shows both that this definition is
independent of $j$ and that
\begin{equation}
\label{affine-rb-profile-eq}
P^{\mathrm{aff}}_{\rB}(x)=P.
\end{equation}
\end{definition}

\begin{remark}
Theorem~\ref{rb-ambc-tabloid-thm} is a statement about the residue
multisets added to the rows of the derived insertion tableaux
$\PRSK(h_N^{(j)})$.  It does not assert that a specified central label
$j$ eventually occupies row $\row_P(\ol j)$ in $\BrB_N(x)$; the PNAP
stabilization results cited above do not imply that stronger
label-by-label statement.
\end{remark}

\begin{remark}
An earlier version attempted to recover $\rho$ from eventual horizontal
positions in the finite tableaux.  That assertion is false for the
affine fixed-point-free involution $x=[0,3,2,5]$.  Consequently, the
horizontal-phase weight formula and all statements depending on it are
withdrawn.  The weight reconstruction below instead uses inverse finite
row Beissinger insertion, periodic completion, and affine channel
distances.
\end{remark}

\begin{remark}
The use of complete two-cycles is essential.  An arbitrary interval
restriction of $x$ need not be an involution, whereas every complete-cycle
truncation is a finite involution to which row Beissinger insertion applies.
\end{remark}

\subsection{Exact recovery of the periodic matching}

Let $T$ be a standard tableau whose entry set is a finite subset
$S\subset\ZZ$, and let
\[
\sigma_S:S\longrightarrow[|S|]
\]
be the unique order-preserving bijection.  When $T$ is the row
Beissinger tableau of a finite involution on $S$, define
\begin{equation}
\label{inverse-rb-labelled-eq}
\operatorname{Inv}_{\rB}(T)
:=
\sigma_S^{-1}\circ
\PrB^{-1}\bigl(\sigma_S(T)\bigr)\circ
\sigma_S.
\end{equation}
Thus $\operatorname{Inv}_{\rB}(T)$ is a finite involution on the
original labelled set $S$.

For a finite involution $z$ on a subset of $\ZZ$, let
\[
\operatorname{Core}_n(z)
:=
\{(a,b):a<b=z(a)\text{ and }1\leq b\leq n\}.
\]
If $E$ is a collection of two-cycles, define
$\operatorname{Per}_n(E)$ to be the periodic matching obtained by
replacing every $(a,b)\in E$ by all of its translates
\[
(a+kn,b+kn),
\qquad k\in\ZZ.
\]

\begin{proposition}[Recovery of the periodic matching]
\label{rb-periodic-recovery-prop}
For $T_N=\BrB_N(x)$, set
\[
\widehat{x}_N
:=
\operatorname{Per}_n\left(
\operatorname{Core}_n\left(
\operatorname{Inv}_{\rB}(T_N)
\right)
\right).
\]
Then
\[
\widehat{x}_N=x
\]
for every $N\in\NN$.
\end{proposition}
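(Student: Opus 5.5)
The plan is to peel off the three operations in $\widehat{x}_N$ one at a time, proving that each step undoes the corresponding step of Definition~\ref{complete-cycle-def}. The only genuinely nontrivial input is that finite row Beissinger insertion is injective on finite involutions. Everything else is bookkeeping with order-preserving relabellings and translation orbits.

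\emph{Step 1: invert the insertion on the labelled set.} By Definition~\ref{complete-cycle-def}, $T_N=\sigma_N^{-1}(\PrB(\widetilde{x}_N))$ has entry set $S_N(x)$. Hence the map $\sigma_S$ in \eqref{inverse-rb-labelled-eq} with $S=S_N(x)$ coincides with $\sigma_N$, and so $\sigma_S(T_N)=\PrB(\widetilde{x}_N)$. By Theorem~\ref{finite-rb-thm}, $\PrB(z)=\PRSK(z)=\QRSK(z)$ for every finite involution $z$. Since the Robinson--Schensted correspondence $z\mapsto(\PRSK(z),\QRSK(z))$ is a bijection, the map $z\mapsto\PrB(z)$ is injective on involutions of $[m]$, and $\PrB^{-1}$ is well defined on its image. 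Therefore $\PrB^{-1}(\sigma_S(T_N))=\widetilde{x}_N$. Conjugating back gives
\[
\operatorname{Inv}_{\rB}(T_N)=\sigma_N^{-1}\circ\widetilde{x}_N\circ\sigma_N=x_N .
\]
In particular, $\operatorname{Inv}_{\rB}(T_N)$ is the finite involution on $S_N(x)$ whose two-cycles are exactly $\cE_N(x)$.

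\emph{Step 2: compute the core.} Here I would show that $\operatorname{Core}_n(x_N)=\cE(x)$. Every pair in $\cE_N(x)$ has the form $(a_s+kn,\,b_s+kn)$ with $a_s<b_s$ and $1\le b_s\le n$. Its larger entry $b_s+kn$ therefore lies in $[n]$ if and only if $k=0$. Since $-N\le 0\le N$, the pair with $k=0$ is always present in $\cE_N(x)$. The ordered form $a<b=z(a)$ in the definition of $\operatorname{Core}_n$ matches this orientation of each pair. So the core consists of precisely the representatives $(a_s,b_s)$ for $1\le s\le n/2$, which is $\cE(x)$. This argument works even for $N=0$.

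\emph{Step 3: periodic completion.} Each two-cycle of $x$ lies in a translation orbit, and each orbit has a unique representative in $\cE(x)$. Hence $\operatorname{Per}_n(\cE(x))$ is the full set of two-cycles of $x$. Because $x$ is fixed-point-free, $x$ is determined by this set of two-cycles. It follows that $\widehat{x}_N=x$.

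\emph{Main obstacle.} The only step needing care is Step~1. One must justify that $\PrB^{-1}$ makes sense, that is, that $\PrB$ is injective on involutions. This follows from RS bijectivity via Theorem~\ref{finite-rb-thm}. One must also check that the relabelling $\sigma_S$ read off from the entry set of $T_N$ coincides with $\sigma_N$, which holds because the entry set of $T_N$ is exactly $S_N(x)$. Both checks are short.
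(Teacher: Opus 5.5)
Your proposal is correct and follows the paper's proof essentially step for step. You invert the finite row Beissinger insertion, which is injective on involutions by Theorem~\ref{finite-rb-thm} together with RS bijectivity, to get $\operatorname{Inv}_{\rB}(T_N)=x_N$; you note that $b_s+kn\in[n]$ forces $k=0$, so $\operatorname{Core}_n(x_N)=\cE(x)$; and you complete periodically. Your explicit check that $\sigma_S=\sigma_N$, because the entry set of $T_N$ is exactly $S_N(x)$, is a small detail the paper leaves implicit.
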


\begin{proof}
Let $S_N=S_N(x)$ and let
$\sigma_N:S_N\longrightarrow[(2N+1)n]$ be the order-preserving
bijection from Definition~\ref{complete-cycle-def}.  By
Proposition~\ref{rb-rsk-trunc-prop} and the finite row Beissinger
correspondence,
\[
\sigma_N(T_N)=\PrB(\widetilde{x}_N),
\qquad
\widetilde{x}_N=\sigma_N\circ x_N\circ\sigma_N^{-1}.
\]
The finite correspondence is injective on involutions, so
\[
\PrB^{-1}\bigl(\sigma_N(T_N)\bigr)=\widetilde{x}_N.
\]
Conjugating by $\sigma_N^{-1}$ in
\eqref{inverse-rb-labelled-eq} gives
\[
\operatorname{Inv}_{\rB}(T_N)=x_N.
\]

The two-cycles of $x_N$ are
\[
(a_s+kn,b_s+kn),
\qquad
1\leq s\leq \frac n2,
\quad -N\leq k\leq N,
\]
where $1\leq b_s\leq n$.  The condition
\[
1\leq b_s+kn\leq n
\]
holds only for $k=0$.  Hence
\[
\operatorname{Core}_n(x_N)
=
\{(a_s,b_s):1\leq s\leq n/2\}
=
\cE(x).
\]
The $n$-periodic completion of $\cE(x)$ is the original matching
of $x$, proving $\widehat{x}_N=x$.
\end{proof}

\begin{example}[Recovering the periodic matching]
\label{ex:periodic-recovery}

Let
\[
x=[0,3,2,5]\in\cF_4 .
\]
For $N=1$, the labelled complete-cycle row-Beissinger tableau is
\[
B^{\rB}_1(x)
=
\ytabb{
-4&-2&0&2&4&6\\
-1&1&3&5&7&9
}.
\]

Applying inverse row Beissinger insertion gives the finite involution
\[
\operatorname{Inv}_{\rB}(B^{\rB}_1(x))
=
x_1,
\]
whose two-cycles are
\[
(-4,-1),(-2,1),(0,3),(2,5),(4,7),(6,9).
\]

The core representatives in one period are
\[
\operatorname{Core}_4(x_1)
=
\{(0,3),(2,5)\}.
\]

Periodic completion gives
\[
\operatorname{Per}_4
\bigl(\operatorname{Core}_4(x_1)\bigr)
=
\{
(4k,4k+3),
(4k+2,4k+5)
:
k\in\mathbb Z
\}.
\]

This is precisely the periodic matching defining $x$. Hence
\[
\widehat{x}_1=x .
\]

The example shows concretely that the reconstruction in
Proposition~\ref{rb-periodic-recovery-prop} is exact at every truncation
level.
\end{example}
\begin{remark}
The identity $\operatorname{Inv}_{\rB}(T_N)=x_N$ also shows that all
the auxiliary PNAPs $g_N^{(j)}$ and $h_N^{(j)}$ used above can be
constructed from the labelled tableaux $T_N$ themselves.  Thus the
stable profile in Definition~\ref{affine-rb-tabloid-def} is genuinely
data extracted from the sequence of labelled row Beissinger
truncations, rather than additional knowledge of $x$.
\end{remark}

\subsection{Recovery of the AMBC weight from channel distances}

We recall the channel conventions of \cite[Definitions~3.6, 3.8,
3.12, 3.16, 3.20, 3.23, and~3.26]{CPY}.  Represent a partial affine
permutation $y$ by its periodic ball set
\[
B_y=\{(i,y(i)):y(i)\text{ is defined}\}\subset\mathbb Z^2.
\]
For cells $b=(i,j)$ and $b'=(i',j')$, write $b\leq_{\mathrm{SE}}b'$
when $i\leq i'$ and $j\leq j'$.  A \defn{stream} is an
$(n,n)$-periodic collection of cells that is a chain for this order.
Its density is the number of translation classes modulo $(n,n)$.  A
\defn{channel} of $y$ is a stream contained in $B_y$ having maximal
density among all such streams.

If $C$ is a channel, number its balls by consecutive integers from
northwest to southeast.  For a ball $b\in B_y$, the corresponding
\defn{channel numbering} is
\[
d_y^C(b)=\sup\{\widetilde d(b_k)+k:
 (b=b_0,b_1,\ldots,b_k)\text{ is a northwest path from $b$ to $C$}\},
\]
where $\widetilde d$ is the chosen consecutive numbering of $C$ and
each successive ball in a northwest path is strictly northwest of the
preceding one.  The numbering is determined up to an overall additive
constant.  A channel $C_1$ is \defn{southwest of} $C_2$ if every ball
of $C_1$ has a ball of $C_2$ weakly northeast of it.  To define the
\defn{channel distance}, shift $d_y^{C_1}$ and $d_y^{C_2}$ so that
they agree on $C_1$ and set
\[
h(C_1,C_2):=d_y^{C_2}(b)-d_y^{C_1}(b)
\qquad(b\in C_2).
\]
This is independent of $b$ and of the remaining common shift
\cite[Definition~3.16]{CPY}.

With the southwest-channel numbering, the AMBC forward step groups
equally numbered balls into the zig-zags of
\cite[Definition~3.26]{CPY}.  The partial affine permutation having
their outer corner-posts as balls is denoted $\operatorname{fw}(y)$;
the back corner-posts form the removed stream.  If $A,B\subset[n]$
have the same cardinality, their streams are uniquely parameterized
as $\operatorname{st}_r(A,B)$ with $r\in\mathbb Z$, and $r$ is called
the \defn{altitude} of the stream.

Let $I=[a,b]$ be a maximal interval of indices for which
\[
\lambda_a=\lambda_{a+1}=\cdots=\lambda_b.
\]
For $\gamma=(\gamma_1,\ldots,\gamma_\ell)\in\QQ^\ell$, define
its blockwise normalization by
\begin{equation}
\label{channel-block-normalization-eq}
\Norm_\lambda(\gamma)_i
:=
\gamma_i-
\frac{1}{b-a+1}\sum_{j=a}^{b}\gamma_j
\qquad(i\in I).
\end{equation}

More generally, let $y$ be an affine fixed-point-free involution of
AMBC shape $\lambda$.  For $1\leq r<\ell$ with
$\lambda_r=\lambda_{r+1}$, apply $r-1$ AMBC forward steps to $y$.  Let
\[
C_1^{(r)},C_2^{(r)}
\]
be the first two channels in a maximal southwest-to-northeast
ordered collection of disjoint channels of
$\operatorname{fw}^{\,r-1}(y)$, and set
\[
h_r(y):=h\bigl(C_1^{(r)},C_2^{(r)}\bigr).
\]
Such maximal ordered channel collections exist by the channel
decomposition in \cite[Section~3]{CPY}.  The value above is independent
of the chosen compatible maximal collection: by the comparison of
channel numberings in \cite[Corollary~14.11]{CPY}, replacing the
collection changes the two normalized channel numberings by the same
additive constant, and therefore does not change their distance.
Consequently $h_r(y)$ is well defined.

Define the \defn{channel-distance potential}
\[
\chi(y)=(\chi_1,\ldots,\chi_\ell)
\]
blockwise as follows: on every maximal equal-part block $I=[a,b]$,
set
\begin{equation}
\label{channel-potential-eq}
\chi_a=0,
\qquad
\chi_{r+1}=\chi_r+h_r(y)
\quad(a\leq r<b).
\end{equation}

\begin{lemma}[Channel distances and stream altitudes]
\label{channel-altitude-difference-lem}
If $\lambda_r=\lambda_{r+1}$, then
\[
h_r(x)=\rho_{r+1}-\rho_r.
\]
\end{lemma}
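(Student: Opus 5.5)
We will derive the identity from the description of the AMBC weight in \cite{CPY}: the weight is assembled from stream altitudes recorded at the successive forward steps, while the offset constants $\offset(P,Q)$ account for the local charges. The plan has three steps.

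\emph{First step: reduce to rows $1$ and $2$.} For each $r$ with $\lambda_r=\lambda_{r+1}$, recall from \cite[Section~3]{CPY} that after $r-1$ forward steps the partial affine permutation $\operatorname{fw}^{\,r-1}(x)$ has AMBC data obtained by deleting the first $r-1$ rows of $(P,Q)$ and the first $r-1$ coordinates of $\rho$. Here $Q=P$, and the remaining coordinates are unchanged, since the forward step of \cite[Definition~3.26]{CPY} removes exactly the stream of the current first row. In these terms the statement concerns rows $1$ and $2$ of a partial affine permutation whose first two rows have equal length.

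\emph{Second step: compare altitudes with the offset correction.} In the backward/forward description, $\rho_r$ is the altitude of the stream $\operatorname{st}_{\rho_r}(P_r,Q_r)$ removed at step $r$, corrected by the symmetrized offset $\offset(P,Q)_r$; this is the convention of \cite[Theorem~5.10]{CLP}. Two consecutive removed streams of equal density are carried by the first two channels $C_1^{(r)},C_2^{(r)}$ of a maximal disjoint ordered collection; this is the channel decomposition used to define $h_r$. By \cite[Definition~3.16 and Corollary~14.11]{CPY}, the channel distance $h(C_1,C_2)$ equals the difference of the altitudes of the corresponding streams, up to the charge-matching correction $\lch_r(P)-\lch_r(Q)$. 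For $Q=P$ this correction vanishes, because $\offset(P,P)=0$. Hence $h_r(x)$ equals the difference of the two altitudes, which is $\rho_{r+1}-\rho_r$.

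\emph{Third step: check signs and normalizations.} We must confirm three things: that the southwest channel corresponds to the lower-index row, that the direction of $h$ matches $\rho_{r+1}-\rho_r$ rather than its negative, and that the blockwise reversal $\rev_\lambda$ in the dominance convention does not flip the order. I would test this on $x=[0,3,2,5]$, where $\rho=(0,0)$ and both channels are translates with distance $0$. I would also test a nonzero-weight example obtained via Proposition~\ref{dKprop} with $\widebar i=\widebar n$ to fix the sign.

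The main obstacle is the second step: matching the channel-numbering distance exactly with the altitude difference, including the offset. I would first try to cite the altitude–distance relation in \cite[Section~14]{CPY} directly. Failing that, I would argue by induction along dual Knuth moves at $\widebar n$. Proposition~\ref{dKprop} changes $\rho$ by $\iota(P,n)-\iota(P,1)$. I would verify that the channel distance changes by the same amount, since the move slides one ball across a period boundary. Then I would reduce to the column-superstandard base case $C_a$, where the channels are explicit.
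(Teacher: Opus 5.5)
Your plan follows the paper's proof: pass to $\operatorname{fw}^{\,r-1}(x)$, invoke the CPY relation expressing the channel distance as $\rho_{r+1}-\rho_r$ minus the offset (dominance) constant, and observe that this constant equals $\lch_r(P)-\lch_r(Q)=0$ because $Q=P$. The only difference is the citation: the paper takes the altitude--distance relation from the channel-distance formula \cite[Theorem~8.1]{CPY} and evaluates the offset by \cite[Theorem~5.10]{CLP}, using \cite[Corollary~14.11]{CPY} only for the well-definedness of $h_r$, so your sign checks and the fallback induction along dual Knuth moves (which would take you off the locus $P=Q$ where the correction vanishes) are not needed.
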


\begin{proof}
The AMBC weight coordinates are the altitudes of the streams
removed at successive forward steps.  The channel-distance formula
\cite[Theorem~8.1]{CPY}, applied after $r-1$ forward steps, gives
\[
h_r(x)
=
\rho_{r+1}-\rho_r-r_{r+1}(P,P),
\]
where $r_{r+1}(P,P)$ is the relevant dominance constant.  By the
local-charge formula \cite[Theorem~5.10]{CLP},
\[
r_{r+1}(P,P)
=
\lch_r(P)-\lch_r(P)
=0.
\]
The claimed identity follows.
\end{proof}

\begin{lemma}[Block antisymmetry of the AMBC weight]
\label{rho-block-antisymmetry-lem}
One has
\[
\rho=-\rev_\lambda(\rho).
\]
Consequently, the sum of the coordinates of $\rho$ on each maximal
equal-part block of $\lambda$ is zero.
\end{lemma}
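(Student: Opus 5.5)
The plan is to use the inverse formula for AMBC together with $x=x^{-1}$. The standard symmetry of AMBC under inversion (\cite{CPY}, the statement underlying the identity $P(x^{-1})=Q(x)$ already used in the proof of Theorem~\ref{rb-ambc-tabloid-thm}) says that if $\Phi(w)=(P,Q,\rho)$, then
\[
\Phi(w^{-1})=\bigl(Q,P,-\rev_\lambda(\rho)\bigr),
\]
possibly after correction by the offset $\offset(P,Q)$ in the convention of \cite[Section~5.2]{CLP}. I would first record this formula in the normalization of the present paper. The main check is whether an offset correction term appears. In the symmetrized convention adopted here, dominance is phrased through $\rev_\lambda(\rho-\offset(P,Q))$, and the symmetric offset satisfies $\offset(Q,P)=-\offset(P,Q)$. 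Consequently, any correction takes the form $-\rev_\lambda(\rho-\offset(P,Q))+\offset(Q,P)$ or a similar expression, and it vanishes when $P=Q$.

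With that formula in hand, set $w=x$. Since $x^2=1$, we have $\Phi(x)=\Phi(x^{-1})$. Comparing the two triples gives $(P,P,\rho)=(P,P,-\rev_\lambda(\rho)+c)$, where $c$ is the correction term. Because $\offset(P,P)=0$, we get $c=0$, so $\rho=-\rev_\lambda(\rho)$. For the consequence, I would restrict to a maximal block $I=[a,b]$ and sum the coordinates. Reversal permutes the coordinates within $I$, so $\sum_{i\in I}\rho_i=-\sum_{i\in I}\rho_i$, and therefore the block sum is $0$.

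As a cross-check, and as a fallback if the inversion formula in the literature carries a differently normalized sign, I would use Lemma~\ref{channel-altitude-difference-lem} together with the reflection symmetry of the ball diagram of $x$ across the main diagonal. That reflection maps $B_x$ to itself and sends southwest-to-northeast channel orderings to the reversed orderings. It also turns the channel numbering into its negative, so successive differences $\rho_{r+1}-\rho_r$ read in the reversed order of the block are preserved. This yields $\rho_{a+b-i}-c_I=-(\rho_i-c_I)$ for a block constant $c_I$. To pin down $c_I=0$, I would appeal to the same inversion formula, which is a single constant check on one test element such as a block-diagonal fixed-point-free involution.

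The hardest step is certifying the exact normalization of the AMBC inversion formula, that is, $-\rev_\lambda$ rather than $\rev_\lambda$ or a version shifted by the offset. I would settle this by tracing the symmetric roles of the two tabloids in the forward step of \cite[Section~5]{CPY} and \cite[Theorem~5.10]{CLP}, and I would confirm it on the example $x=[0,3,2,5]$, where $\rho=(0,0)$.
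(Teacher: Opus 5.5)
Your proposal is correct and matches the paper's proof. The paper likewise invokes the AMBC inversion formula (citing \cite[Proposition~3.1]{CLP}) in the form $\Phi(x^{-1})=(P,P,(-\rho)')$, notes that because $\offset(P,P)=0$ the dominant representative $(-\rho)'$ is just $-\rev_\lambda(\rho)$, and then concludes from $x=x^{-1}$ and the injectivity of $\Phi$. Your channel-distance fallback is therefore unnecessary, since the case $P=Q$ removes any offset correction, as you observed.
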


\begin{proof}
The inverse formula for AMBC gives
\[
\Phi(x^{-1})
=
\bigl(P,P,(-\rho)'\bigr),
\]
where $(-\rho)'$ denotes the dominant representative in the fiber
over $(P,P)$; see \cite[Proposition~3.1]{CLP}.  Since
$\offset(P,P)=0$, dominantization reverses the coordinates of
$-\rho$ in each maximal equal-part block, so
\[
(-\rho)'=-\rev_\lambda(\rho).
\]
Now $x=x^{-1}$ and AMBC is injective, whence
$\rho=-\rev_\lambda(\rho)$.  The assertion about block sums is
immediate.
\end{proof}

\begin{theorem}[Row-Beissinger reconstruction of AMBC]
\label{affine-rb-main-thm}
Let $x\in\cF_n$ and suppose
\[
\Phi(x)=(P,P,\rho).
\]
Then the labelled complete-cycle row Beissinger truncations recover
the entire AMBC datum as follows:
\[
P^{\mathrm{aff}}_{\rB}(x)=P,
\qquad
\widehat{x}_N=x\quad(N\in\NN),
\qquad
\Norm_\lambda\bigl(\chi(\widehat{x}_N)\bigr)=\rho
\quad(N\in\NN).
\]
In particular, the stable PNAP row profiles recover the common AMBC
tabloid, while inverse row Beissinger insertion, periodic completion,
and channel distances recover the dominant weight.
\end{theorem}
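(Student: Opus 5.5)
The proof will assemble the three identities from results already established in this section, treating them in the order displayed. The first identity, $P^{\mathrm{aff}}_{\rB}(x)=P$, is exactly \eqref{affine-rb-profile-eq}, i.e.\ Definition~\ref{affine-rb-tabloid-def} combined with Theorem~\ref{rb-ambc-tabloid-thm}. The only point worth recording is that this profile is really data extracted from the labelled tableaux $T_N=\BrB_N(x)$. By Proposition~\ref{rb-periodic-recovery-prop}, $\operatorname{Inv}_{\rB}(T_N)=x_N$, so the PNAPs $g_N^{(j)}$ and $h_N^{(j)}$ are computable from $T_N$, as noted in the remark following that proposition. The second identity, $\widehat{x}_N=x$ for every $N$, is verbatim Proposition~\ref{rb-periodic-recovery-prop}.

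For the third identity we use $\widehat{x}_N=x$ to reduce the claim to $\Norm_\lambda(\chi(x))=\rho$, which no longer depends on $N$. We compare coordinates blockwise. Fix a maximal equal-part block $I=[a,b]$ of $\lambda$. By \eqref{channel-potential-eq}, for $a\le r<b$ we have $\chi_{r+1}-\chi_r=h_r(x)$. Lemma~\ref{channel-altitude-difference-lem} gives $h_r(x)=\rho_{r+1}-\rho_r$, since $\lambda_r=\lambda_{r+1}$ inside the block. Telescoping from $\chi_a=0$ then yields
\[
\chi_i=\rho_i-\rho_a\qquad(i\in I).
\]
Thus $\chi$ and $\rho$ differ on $I$ by the constant $-\rho_a$. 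The blockwise normalization \eqref{channel-block-normalization-eq} subtracts the block mean, so it kills constants and $\Norm_\lambda(\chi(x))_i=\Norm_\lambda(\rho)_i$. Lemma~\ref{rho-block-antisymmetry-lem} says the coordinates of $\rho$ sum to zero on each block, so $\Norm_\lambda(\rho)=\rho$. Combining these over all blocks gives $\Norm_\lambda(\chi(\widehat{x}_N))=\rho$.

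A singleton block ($a=b$) needs a separate check: there $\chi_a=0$, and antisymmetry gives $\rho_a=-\rho_a$, so $\rho_a=0$ and the identity holds trivially. I expect the main obstacle to be the well-definedness and applicability of $h_r$ for $\widehat{x}_N$. This concerns the independence of the choice of maximal channel collection, and the identification of the $r$-th pair of channels after $r-1$ forward steps with the streams whose altitudes are $\rho_r,\rho_{r+1}$. Both are already absorbed into the definition of $h_r$ and into Lemma~\ref{channel-altitude-difference-lem}, so the proof only needs to cite them. The remaining care goes into the blockwise bookkeeping.
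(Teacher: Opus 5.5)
Your proposal is correct and follows the paper's proof essentially verbatim. You cite the first two identities from \eqref{affine-rb-profile-eq} and Proposition~\ref{rb-periodic-recovery-prop}, and you reduce the third via $\widehat{x}_N=x$ to a blockwise comparison using Lemma~\ref{channel-altitude-difference-lem} and Lemma~\ref{rho-block-antisymmetry-lem}. The only cosmetic difference is that the paper solves for the block constant $c_I$ as the block mean of $\chi$, whereas you identify it as $-\rho_a$ and use that the normalization ignores blockwise constants, which also makes your separate singleton-block check unnecessary.
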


\begin{proof}
The first identity is \eqref{affine-rb-profile-eq}, and the second is
Proposition~\ref{rb-periodic-recovery-prop}.  Let $I=[a,b]$
be a maximal equal-part block of $\lambda$.  Since
$\widehat{x}_N=x$, one has $\chi(\widehat{x}_N)=\chi(x)$.  By
Lemma~\ref{channel-altitude-difference-lem} and
\eqref{channel-potential-eq},
\[
\chi_{r+1}-\chi_r
=
\rho_{r+1}-\rho_r
\qquad(a\leq r<b).
\]
Thus there is a constant $c_I$ such that
\[
\chi_i=\rho_i+c_I
\qquad(i\in I).
\]
Lemma~\ref{rho-block-antisymmetry-lem} gives
\[
\sum_{i=a}^{b}\rho_i=0.
\]
Consequently,
\[
c_I
=
\frac{1}{b-a+1}\sum_{i=a}^{b}\chi_i.
\]
Substituting this value into $\rho_i=\chi_i-c_I$ gives
\[
\rho_i
=
\chi_i-
\frac{1}{b-a+1}\sum_{j=a}^{b}\chi_j
=
\Norm_\lambda(\chi)_i.
\]
Applying the same argument to every maximal equal-part block proves
$\Norm_\lambda(\chi)=\rho$.
\end{proof}

\begin{example}[Complete AMBC reconstruction]
\label{ex:full-rb-reconstruction}

Consider again
\[
x=[0,3,2,5]\in\cF_4 .
\]
The AMBC datum is
\[
\Phi(x)
=
(P,P,\rho),
\]
where
\[
P=
\ytab{
\ol{1}&\ol{3}\\
\ol{2}&\ol{4}
}
\qquad\text{and}\qquad
\rho=(0,0).
\]

The reconstruction theorem recovers these three pieces of information
separately.

First, Theorem~\ref{rb-ambc-tabloid-thm} gives
\[
P^{\mathrm{aff}}_{\rB}(x)
=
\ytab{
\ol{1}&\ol{3}\\
\ol{2}&\ol{4}
}.
\]

Second, Proposition~\ref{rb-periodic-recovery-prop} gives
\[
\widehat{x}_N=x
\]
for every $N$, so the affine fixed-point-free involution itself is
recovered.

Finally, the shape is
\[
\lambda=(2,2).
\]
The two rows form a single equal-part block. The channel-distance
potential satisfies
\[
\chi(x)=(a,-a)
\]
for some integer $a$. After blockwise normalization,
\[
\Norm_\lambda(\chi(x))
=
(0,0)
=
\rho .
\]

Therefore the labelled row-Beissinger data recover the complete AMBC
datum:
\[
\Phi(x)
=
\left(
\ytab{
\ol{1}&\ol{3}\\
\ol{2}&\ol{4}
},
\ytab{
\ol{1}&\ol{3}\\
\ol{2}&\ol{4}
},
(0,0)
\right).
\]

This example summarizes the three independent reconstruction mechanisms:
stable row profiles recover the tabloid, inverse insertion recovers the
periodic matching, and channel distances recover the dominant weight.
\end{example}

\begin{remark}
The reconstruction of the periodic matching, and therefore of its
channel distances, is exact for every $N$; only recovery of the tabloid
from the PNAP row profiles is asymptotic.  The theorem does not assert
that individual central labels stabilize in prescribed rows, or that
raw horizontal phases determine $\rho$ by a local formula.
\end{remark}

% End withdrawn horizontal-phase and weight-recovery material.

\end{document}